\documentclass[12pt]{amsart}
\usepackage{amsmath,amsthm,amsfonts,amssymb,mathrsfs}%\usepackage{paralist,nicefrac}
\usepackage[all]{xy}                     % can't live without diagrams!!!
\UseTips                                        % Use Computer Modern Arrowheads in diagrams
\newdir^{ (}{{}*!/-3pt/\dir^{(}}
\newdir_{ (}{{}*!/-3pt/\dir_{(}}

\RequirePackage[dvipsnames,usenames]{color}  %makes the standard dvips color names available

\usepackage[total={6.5in,8.75in}, top=1.2in, left=1.0in, includefoot]{geometry}
\usepackage{amssymb}
\usepackage{latexsym}
\usepackage{amsmath,amsthm}
\usepackage{aeguill}
\usepackage{amssymb}
\usepackage{mathrsfs}
\usepackage{hyperref}
\usepackage{etoolbox}
\usepackage{enumerate}
\usepackage[dvipsnames]{xcolor}
\usepackage{tikz}[2013/12/13]

\makeatletter
\pretocmd{\chapter}{\addtocontents{toc}{\protect\addvspace{15\p@}}}{}{}
\pretocmd{\section}{\addtocontents{toc}{\protect\addvspace{3\p@}}}{}{}
\makeatother

\makeatletter
\def\@tocline#1#2#3#4#5#6#7{\relax
  \ifnum #1>\c@tocdepth % then omit
  \else
    \par \addpenalty\@secpenalty\addvspace{#2}%
    \begingroup \hyphenpenalty\@M
    \@ifempty{#4}{%
      \@tempdima\csname r@tocindent\number#1\endcsname\relax
    }{%
      \@tempdima#4\relax
    }%
    \parindent\z@ \leftskip#3\relax \advance\leftskip\@tempdima\relax
    \rightskip\@pnumwidth plus4em \parfillskip-\@pnumwidth
    #5\leavevmode\hskip-\@tempdima
      \ifcase #1
       \or\or \hskip .5em \or \hskip 1em \else \hskip 1.5em \fi%
      #6\nobreak\relax
    \dotfill\hbox to\@pnumwidth{\@tocpagenum{#7}}\par
    \nobreak
    \endgroup
  \fi}
\makeatother
\DeclareSymbolFont{bbold}{U}{bbold}{m}{n}
\DeclareSymbolFontAlphabet{\mathbbold}{bbold}

\newcommand{\C}{\mathbb{C}}
\newcommand{\N}{\mathbb{N}}
\newcommand{\Z}{\mathbb{Z}}

\newcommand{\Q}{\mathbb{Q}}
\newcommand{\F}{\mathbb{F}}
\newcommand{\A}{\mathbb{A}}

\newcommand{\Gal}{\operatorname{Gal}}

\newcommand{\Res}{\operatorname{Res}}
\newcommand{\Ind}{\operatorname{Ind}}

\newcommand{\ad}{\operatorname{ad}}
\newcommand{\ab}{\operatorname{ab}}

\renewcommand{\sc}{\operatorname{sc}}
\renewcommand{\ss}{\operatorname{ss}}
\newcommand{\der}{\operatorname{der}}

\newcommand{\Hom}{\operatorname{Hom}}

\newcommand{\End}{\operatorname{End}}

\newcommand{\conn}{\operatorname{conn}}

\newcommand{\GL}{\mathrm{GL}}
\newcommand{\PGL}{\mathrm{PGL}}

\def\CM{{\scriptscriptstyle{ \mathrm{CM}}}}
\def\cm{{ \mathrm{pab}}}
\def\ab{\mathrm{ab}}
\def\wab{\mathrm{wab}}
\def\Elambda{\overline{E}_\lambda}

\def\bM{\mathbf{M}}
\def\bG{\mathbf{G}}
\def\bH{\mathbf{H}}
\def\bS{\mathbf{S}}

\def\bT{\mathbf{T}}

\def\bpx{\begin{pmatrix}}
\def\epx{\end{pmatrix}}

\newcommand{\hto}{\hookrightarrow}

\DeclareMathOperator{\Frob}{Frob}

\newtheorem{thm}{Theorem}[section]

\newtheorem{cor}[thm]{Corollary}

\newtheorem{prop}[thm]{Proposition}

\newtheorem{remark}[thm]{Remark}
\newtheorem{lem}[thm]{Lemma}
\newtheorem{defi}[thm]{Definition}

\advance\textheight by .07cm
\begin{document}

\title[]{On coefficients, potentially abelian quotients,  and residual irreducibility of compatible systems}

\author{Gebhard B\"ockle}
\email{gebhard.boeckle@iwr.uni-heidelberg.de}
\address{
Heidelberg University\\
IWR, Im Neuenheimer Feld 205,
69120 Heidelberg, Germany}

\author{Chun-Yin Hui}
\email{chhui@maths.hku.hk, pslnfq@gmail.com}
\address{
Department of Mathematics\\
The University of Hong Kong\\
Pokfulam, Hong Kong}

\thanks{Mathematics Subject Classification (2020): 11F80, 14K22, 20G25.}

\begin{abstract}
Let $\{\rho_\lambda:\Gal_K\to\GL_n(\Elambda)\}$ be a semisimple $E$-rational compatible system of a number field $K$.  
In a first step,
building upon the theory of %Chenevier's and Rouquier's work on 
pseudocharacters \cite{Ro96},\cite{Ch14}, %in a first step 
we attach to each $\rho_\lambda$ an algebraic monodromy group $\bG_\lambda$ defined over $E_\lambda$
and  also prove that the compatible system can be descended to a strongly $E'$-rational compatible system 
$\{\rho_{\lambda'}:\Gal_K\to\GL_n(E'_{\lambda'})\}$ 
for some finite extension $E'/E$.
Secondly, we demonstrate that the maximal %\textcolor[rgb]{0,0,1}
{potentially abelian} quotient 
 of $\bG_\lambda$ is independent of $\lambda$ in a strong sense.
Finally, as an application, we generalize
a  result of Patrikis--Snowden--Wiles on residual irreducibility of compatible systems.
\end{abstract}

\maketitle
%\tableofcontents

\section{Introduction}\label{s1}
\subsection{$E$-compatible systems and algebraic monodromy groups}\label{s1.1} 
Let $K$ and $E$ be number fields and let $\Sigma_K$ and $\Sigma_E$ be the corresponding sets of finite places.
Fix an algebraic closure $\overline K$ of $K$ and equip the absolute Galois group $\Gal_K:=\Gal(\overline K/K)$
with the Krull topology.
For $\lambda\in \Sigma_E$, write $E_\lambda$ as the $\lambda$-adic completion of $E$, 
$\ell$ as the residue characteristic of $\lambda$, and $S_\ell:=\{v\in\Sigma_K: v|\ell\}$.

A family of $n$-dimensional \emph{$\lambda$-adic representations} of $K$ indexed by a subset $\Pi_E\subset\Sigma_E$,
\begin{equation}\label{cs}
\{\rho_\lambda: \Gal_K\to \GL_n(\Elambda):~\lambda\in\Pi_E\},
\end{equation}
is called an \emph{$E$-rational compatible system} (in the sense of Serre) if
there exist a finite $S\subset\Sigma_K$ and a
polynomial $P_v(T)\in E[T]$ for each $v\in\Sigma_K\backslash S$ such that 
the following conditions hold.
\begin{enumerate}[(a)]
\item For each $\lambda$, the representation $\rho_\lambda$ is unramified outside $S\cup S_\ell$.
\item For each $\lambda$ and $v\in\Sigma_K\backslash (S\cup S_\ell)$, 
the characteristic polynomial of \emph{Frobenius at $v$} satisfies
$$\det(\rho_\lambda(\mathrm{Frob}_v)-T\cdot\text{Id})=P_v(T)\in  E[T].$$
\end{enumerate}
In this case, we also call \eqref{cs} an \emph{$E$-compatible system} or just a \emph{compatible system}.

Compatible systems of Galois representations 
can be attached to smooth projective varieties \cite{De74}
and they also arise from certain cuspidal automorphic representations of $\GL_n$, 
see \cite{BLGGT14},\cite{HLTT16},\cite{Sc15}. 
We say that the family \eqref{cs} is \emph{semisimple} (resp. \emph{abelian}, \emph{potentially abelian})
if $\rho_\lambda$ is semisimple (resp. abelian, potentially abelian) for all $\lambda\in\Pi_E$.
We call \eqref{cs} a {\em strongly $E$-rational compatible system} (or \emph{strongly $E$-compatible})
if it is $E$-compatible and up to some change of coordinates,
$\rho_\lambda(\Gal_K)\subset\GL_n(E_\lambda)$\footnote{In other words, $\rho_\lambda$ can be descended to an $E_\lambda$-representation. If $\rho_\lambda$ is also semisimple, such $E_\lambda$-representation is unique up to isomorphism by the Brauer-Nesbitt theorem.}
 for all $\lambda\in\Pi_E$.
This condition on \eqref{cs} is more restrictive than being $E$-compatible.
The Zariski closure of the image $\rho_\lambda(\Gal_K)$ in $\GL_{n,\Elambda}$,
denote by $\bG_\lambda$, is called the \emph{algebraic monodromy group}  of $\rho_\lambda$.
It is a subgroup of $\GL_{n,\Elambda}$.

\subsection{%\textcolor[rgb]{0,0,1}
{Potentially abelian} quotients and $\lambda$-independence}\label{s1.2}
If \eqref{cs} is semisimple and strongly $E$-compatible,
then the algebraic monodromy group $\bG_\lambda$ is naturally a reductive subgroup of $\GL_{n,E_\lambda}$.
One can ask, as in \cite{Se94} for motives, 
if there exists a \emph{common $E$-model} $\bG\subset\GL_{n,E}$ for 
the family $\{\bG_\lambda\subset\GL_{n,E_\lambda}\}_{\Pi_E}$ in the sense that
$\bG\times_E E_\lambda$ and $\bG_\lambda$ are conjugate in $\GL_{n,E_\lambda}$
for all $\lambda\in\Pi_E$. When $\rho_\lambda$ is abelian for some $\lambda$, 
such a $\lambda$-independence problem has an affirmative answer 
by Serre-Waldschmidt theory \cite{Se98},\cite{Wa81}.
In general, it is known that the component group $\pi_0(\bG_\lambda):=\bG_\lambda/\bG_\lambda^\circ$,
the rank and semisimple rank of $\bG_\lambda$ are independent of $\lambda$ \cite{Se81},\cite{Hu13}.
There have been a lot of studies on the $\lambda$-independence of the identity component $\bG_\lambda^\circ$ 
(and the construction of $\bG^\circ\subset\GL_{n,E}$)
under specific situations, e.g., see \cite{Mo17} for a survey on the Mumford-Tate conjecture for abelian varieties \cite{Mu66},
see \cite{LP90,LP92},\cite{Hu25} when the tautological representation of $\bG_\lambda^\circ$ is absolutely irreducible 
for all $\lambda$, and see \cite[$\mathsection1.2$]{HL25} for recent progress under some \emph{type A} assumption.

 Suppose \eqref{cs} is semisimple and only $E$-compatible. %(i.e., without the strongly $E$-rational condition). 
In subsection \ref{ssec:algmon}, we shall 
provide a canonical construction of \emph{the algebraic monodromy group 
$\bG_\lambda:=\bG_{\rho_\lambda,E_\lambda}$} defined over $E_\lambda$
together with a morphism $\Gal_K\stackrel{\rho_\lambda}{\rightarrow}\bG_\lambda(E_\lambda)$ (Definition \ref{def:Glambda-Erat})
with natural properties (Proposition \ref{prop:Glambda-andBC}).
Our construction builds on %Chenevier's  and Rouquier's 
the theory of pseudocharacters \cite{Ro96},\cite{Ch14}, 
%and \cite[Theorem 2.16]{Ch14}, 
whereas
the only conditions we need are the semisimplicity of $\rho_\lambda$ 
and that the characteristic polynomial 
of $\rho_\lambda(g)$ takes values in $E_\lambda[T]$ for all $g\in\Gal_K$. 
This construction is 
compatible with base change, 
but the group $\bG_\lambda$ is in general not a subgroup
of $\GL_{n,E_\lambda}$.
Define the following $E_\lambda$-quotients of $\bG_\lambda$ for all $\lambda\in\Pi_E$.

\begin{itemize}
\item $\bG_\lambda^{\cm}:=\bG_\lambda/[\bG_\lambda^\circ,\bG_\lambda^\circ]$, the maximal %\textcolor[rgb]{0,0,1}
{potentially abelian} quotient of $\bG_\lambda$.
\item $\bG_\lambda^{\ab}:=\bG_\lambda/[\bG_\lambda,\bG_\lambda]$, the maximal abelian quotient of $\bG_\lambda$.
\end{itemize}

We prove that the systems $\{\bG_\lambda^{\cm}\}_{\Pi_E}$ and $\{\bG_\lambda^{\ab}\}_{\Pi_E}$ are independent of $\lambda$ in a strong sense.

\begin{thm}\label{main1}
Let $K$ be a number field and $\{\rho_\lambda: \Gal_K\to \GL_n(\Elambda)\}_{\Pi_E}$ 
be a semisimple $E$-rational compatible system of $K$
with $\{\bG_\lambda\}_{\Pi_E}$ as the system of algebraic monodromy groups defined over $E_\lambda$
(Definition \ref{def:Glambda-Erat}).
\begin{enumerate}[(i)]
\item There exists a family of faithful $E_\lambda$-representations 
$\{f_\lambda: \bG_\lambda^{\cm}\to\GL_{k,E_\lambda}\}_{\Pi_E}$ 
such that 
$$\{\phi_\lambda^{\cm}:\Gal_K\stackrel{\rho_\lambda}{\rightarrow}\bG_\lambda(E_\lambda)\to\bG_\lambda^{\cm}(E_\lambda)\stackrel{f_\lambda}{\rightarrow}\GL_k(E_\lambda)\}_{\Pi_E}$$
is a semisimple potentially abelian (strongly) $E$-rational compatible system of $K$.
\item There exists a family of faithful $E_\lambda$-representations $\{r_\lambda: \bG_\lambda^{\ab}\to\GL_{m,E_\lambda}\}_{\Pi_E}$
such that 
$$\{\phi_\lambda^{\ab}:\Gal_K\stackrel{\rho_\lambda}{\rightarrow}\bG_\lambda(E_\lambda)\to\bG_\lambda^{\ab}(E_\lambda)\stackrel{r_\lambda}{\rightarrow}\GL_m(E_\lambda) \}_{\Pi_E}$$
is a semisimple abelian (strongly) $E$-rational compatible system of $K$.
\item There exist reductive groups $\bG^{\cm}$ and $\bG^{\ab}$ defined over $E$ 
such that
$$\bG^{\cm}\times E_{\lambda}\simeq \bG_\lambda^{\cm}\hspace{.3in}\text{and}\hspace{.3in}
\bG^{\ab}\times E_{\lambda}\simeq \bG_\lambda^{\ab}\hspace{.3in} \text{for all}~ \lambda\in\Pi_E.$$
\end{enumerate}
\end{thm}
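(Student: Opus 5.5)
We plan to reduce everything to the abelian and potentially abelian theory of Serre--Waldschmidt, using the $\lambda$-independence of $\pi_0(\bG_\lambda)$ from \cite{Se81} together with the compatibility of the construction of $\bG_\lambda$ with base change (Proposition \ref{prop:Glambda-andBC}).

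\emph{Step 1 (reduction to a connected monodromy group).} By \cite{Se81} there is a finite Galois extension $K'/K$, independent of $\lambda$, such that $\rho_\lambda(\Gal_{K'})$ is Zariski dense in $\bG_\lambda^\circ$ for all $\lambda$. Restricting to $K'$ and using base change compatibility, the group $\bG^\circ_\lambda/[\bG_\lambda^\circ,\bG_\lambda^\circ]$ is a torus $\bT_\lambda$. We get abelian characters $\Gal_{K'}\to\bT_\lambda(E_\lambda)$, so $\bG_\lambda^{\cm}$ is an extension of the finite group $\pi_0(\bG_\lambda)$ by $\bT_\lambda$.

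\emph{Step 2 (producing compatible representations).} We need faithful representations of $\bG^{\ab}_\lambda$ and $\bG^{\cm}_\lambda$ whose compositions with $\rho_\lambda$ form compatible systems. The natural candidates come from functorial constructions on $\rho_\lambda$ itself. Taking determinants of $\rho_\lambda$ restricted to the irreducible constituents over $\overline E_\lambda$ (or $\bigwedge^{\mathrm{top}}$ of isotypic pieces), and more generally characters of $\bG_\lambda^\circ$ obtained from $\det$ of suitable subrepresentations, yields characters of $\bT_\lambda$. The compatible-system property of $\rho_\lambda$, via the semisimple $E$-rational structure and the decomposition of the compatible system into pieces after a finite extension of $E$, transfers to these characters.

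\begin{itemize}
\item For (ii), we take $r_\lambda$ to be a sum of determinant-type characters generating the character group of $\bG_\lambda^{\ab}$, and verify that the finite part is also detected, e.g.\ via the compatible system of $\pi_0$-quotients.
\item For (i), we induce the characters of $\Gal_{K'}$ to $\Gal_K$. The induced system $\Ind_{K'}^K$ is potentially abelian, semisimple and compatible, it factors through $\bG^{\cm}_\lambda$, and summing with the finite representation $\Gal_K\to\pi_0$ makes it faithful.
\end{itemize}

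\emph{Step 3 (strong $E$-rationality).} To descend each system to $E_\lambda$, we use the Galois invariance of the set of characters under $\Gal(\overline E_\lambda/E_\lambda)$, summing over orbits. Since the character values of the resulting sums lie in $E_\lambda$, the pseudocharacter theory \cite{Ch14} together with vanishing of Brauer obstructions for abelian (and induced-from-abelian) representations gives $E_\lambda$-models.

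\emph{Step 4 (common $E$-models, part (iii)).} Apply Serre--Waldschmidt \cite{Se98}: an abelian semisimple $E$-rational compatible system comes from an $E$-representation of a Serre group $\bS_\mathfrak m$, and its algebraic monodromy group is the base change of an $E$-group. For $\bG^{\cm}$, we apply this to $\Gal_{K'}$ and then handle the finite extension by $\Gal(K'/K)$ acting compatibly, via the induced system.

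The main obstacle is Step 2. Showing that the chosen characters generate $X^*(\bT_\lambda)$ uniformly in $\lambda$ and are compatible requires controlling how the irreducible constituents of $\rho_\lambda|_{\Gal_{K'}}$ vary with $\lambda$. We would handle this by passing to a finite extension $E'$ over which the system decomposes compatibly, and then taking $\Gal(E'/E)$-orbits.
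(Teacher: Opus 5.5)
\textbf{There is a genuine gap in Step 2, which you yourself flag as the main obstacle; the fix you propose is not available.} There is no known result that the irreducible constituents of $\rho_\lambda$ (or of $\rho_\lambda|_{\Gal_{K'}}$) organize into compatible systems after enlarging $E$.

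\begin{itemize}
\item Even the $E'$-rationality of $\det W_\lambda$ for a single constituent $W_\lambda$ is not elementary. Its Frobenius values are products of roots of $P_v(T)$; they have bounded degree, but are not a priori in a fixed number field.
\item Determinants of subrepresentations of $\rho_\lambda$ need not generate $X^*(\bT_\lambda)$. Take $\bG^\circ_\lambda=\GL_2$ acting faithfully by $\mathrm{Sym}^2V\oplus(V\otimes\det)$. Both determinants equal $\det^3$, so any $r_\lambda$ built this way has kernel $\mu_3$.
\end{itemize}

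So you must use one-dimensional constituents of tensor constructions $\rho_\lambda^{\otimes a}\otimes\rho_\lambda^{\vee,\otimes b}$. These are only weak abelian direct summands of an $E$-rational representation. The paper handles this with three inputs:
\begin{enumerate}
\item Theorem~\ref{BH1}: such summands are locally algebraic. Serre--Waldschmidt with Proposition~\ref{extend} (resp.\ Proposition~\ref{pabconverse} in the potentially abelian case) then extends the single $\lambda_0$-adic representation $f'_{\lambda_0}\circ\rho_{\lambda_0}$ to a strongly $E$-rational system.
\item \cite[Corollary 3.12]{HL25}, applied to $\rho_\lambda\oplus\phi_\lambda$: the projection $\bH_\lambda\to\bG_\lambda$ is an isomorphism at every $\lambda$ once it is at $\lambda_0$. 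Hence the new system factors through $\bG_\lambda$ for \emph{all} $\lambda$ (Proposition~\ref{aux}).
\item The two-point argument of Proposition~\ref{auxgp}. One builds a second system faithful at $\lambda_1$ and transports the comparison maps to all $\lambda$ using the rigidity of Serre-group representations (Proposition~\ref{extendsys}). Then $\alpha_{\lambda_1}\circ\beta_{\lambda_1}=\mathrm{Id}$, giving faithfulness at $\lambda_1$.
\end{enumerate}
Your plan has no substitute for any of these $\lambda$-independence steps.

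\textbf{Your induction step is sound and is a real simplification for (i).} Suppose the connected case of (ii) is known over $K^{\conn}$. Then $\Ind_{K^{\conn}}^K(r_\lambda\circ\rho_\lambda|_{\Gal_{K^{\conn}}})=\Ind_{\bG_\lambda^\circ}^{\bG_\lambda}(r_\lambda)\circ\rho_\lambda$ factors through $\bG_\lambda^{\cm}$. Adding the regular representation of $\Gal(K^{\conn}/K)$ gives a faithful $f_\lambda$ and a strongly $E$-rational system. This would bypass the CM-motive input that the paper uses for (i) via Proposition~\ref{pabconverse}.

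\textbf{The Brauer claim in Step 3 is false but unnecessary.} Brauer obstructions need not vanish for induced-from-abelian representations: the $2$-dimensional representation of $Q_8$ has Schur index $2$ over $\Q_2$. You do not need the claim, since inducing an $E_\lambda$-model already gives an $E_\lambda$-model.

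\textbf{Step 4 does not give (iii) for $\bG^{\cm}$.} Even with a common $E$-model $\bT$ of the tori and a $\lambda$-independent action of $\pi_0$, $\bG_\lambda^{\cm}$ is an extension of $\pi_0$ by $\bT\times E_\lambda$. Its class is a $\lambda$-adic datum, roughly in $H^2(\pi_0,\bT(E_\lambda))$, with no evident global source, so ``acting compatibly'' does not produce an $E$-form. The paper gets the common model from the Taniyama group. Theorem~\ref{pabindep} realizes $\{(\phi_\lambda^{\cm})^{\oplus d}\}$ by an $E$-representation $\Phi$ of $\bM_K^{\CM}$, and $\Phi(\bM_K^{\CM}\times E)$ is the $E$-model.
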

\begin{remark}
Observe that the targets of $f_\lambda$ and $r_\lambda$ are $\GL_{k,E_\lambda}$ 
and not $\GL_{k,\Elambda}$. But note that we have some freedom in choosing $k$; see for instance Theorem~\ref{pabindep}.
\end{remark}

\subsection{Descent of coefficients and residual irreducibility}\label{s1.3}

%\subsection{A residual irreducibility result of Patrikis-Snowden-Wiles}\label{s1.3}

Suppose \eqref{cs} is semisimple and $E$-compatible.
For any finite field extension $E'/E$,  one obtains from \eqref{cs} a semisimple $E'$-compatible system
\begin{equation}\label{cscoeff}
\{\rho_{\lambda'}: \Gal_K\stackrel{}{\rightarrow} \GL_n(\overline E'_{\lambda'})\}_{\Sigma_{E'}(\Pi_E)}
\end{equation}
by a change of coefficients,
where $\Sigma_{E'}(\Pi_E):=\{\lambda'\in\Sigma_{E'}:~\lambda'~\text{divides}~\lambda\in\Pi_E\}$  (see subsection \ref{sExtcs}).
We say that \eqref{cs} can be \emph{descended to a strongly $E'$-rational compatible system}
if there exists $E'/E$ such that \eqref{cscoeff} is strongly $E'$-compatible.
This is true if \eqref{cs} is a
\emph{regular weakly compatible system}, see \cite[Lemma 5.3.1]{BLGGT14}.
%We prove that such descent holds in full generality.
We establish that such descent holds in complete generality, enabling its application to the semisimple 
$E$-compatible systems constructed in \cite{HLTT16},\cite{Sc15}.

\begin{thm}\label{coeffd}
Let $K$ be a number field and $\{\rho_\lambda: \Gal_K\to \GL_n(\overline E_\lambda)\}_{\Pi_E}$ 
be a semisimple $E$-rational compatible system of $K$. There exists a finite extension $E'/E$ such that the following assertions hold.
\begin{enumerate}[(i)]
\item The $E$-compatible system $\{\rho_\lambda\}_{\Pi_E}$ can be descended to 
a strongly $E'$-rational compatible system $\{\rho_{\lambda'}:\Gal_K\to \GL_n(E'_{\lambda'})\}_{\Sigma_{E'}(\Pi_E)}$.
\item The algebraic monodromy group $\bG_{\lambda'}$ of $\rho_{\lambda'}$ splits for all $\lambda'\in \Sigma_{E'}(\Pi_E)$.
\item For each $\lambda'$, every irreducible factor of $\rho_{\lambda'}$ is absolutely irreducible.
 \end{enumerate}
\end{thm}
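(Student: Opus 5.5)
We plan to reduce Theorem~\ref{coeffd} to a uniform bound on the obstruction to descending each $\rho_\lambda$ to its field of traces, and to control that bound using finitely many "types" of monodromy. The key point is that the coefficient field obstruction for a semisimple representation whose characteristic polynomials lie in $E_\lambda[T]$ is measured by Brauer classes (Schur indices) of the simple components of the algebra generated by the image, together with the fields of definition of the irreducible constituents. We aim to show that a single finite extension $E'/E$ kills all of these simultaneously for all $\lambda\in\Pi_E$.

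\textbf{Step 1 (constituents over $\overline{E}_\lambda$).} Decompose $\rho_\lambda\otimes\overline E_\lambda=\bigoplus_i \sigma_{\lambda,i}^{m_i}$ into absolutely irreducible pieces. By the pseudocharacter theory underlying Definition~\ref{def:Glambda-Erat}, $\Gal(\overline E_\lambda/E_\lambda)$ permutes the $\sigma_{\lambda,i}$, and the field of definition of the trace of $\sigma_{\lambda,i}$ is a finite extension $F_{\lambda,i}/E_\lambda$ of degree at most $n$. Obstructions to realizing $\sigma_{\lambda,i}$ over $F_{\lambda,i}$ lie in $\mathrm{Br}(F_{\lambda,i})$ with index dividing $n$. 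Hence, after passing to any extension of $E_\lambda$ of degree divisible by $n!$ and containing the (boundedly ramified) extensions $F_{\lambda,i}$, all constituents are defined and absolutely irreducible.

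\textbf{Step 2 (global choice of $E'$).} Local extensions of degree bounded by $n$ need not come from a single global $E'$, since they vary with $\lambda$. Therefore we use compatibility. The permutation action on constituents, and the degrees $[F_{\lambda,i}:E_\lambda]$, are governed by Frobenius traces $P_v(T)\in E[T]$. We would pass to the finite extension $K'/K$ cutting out $\pi_0(\bG_\lambda)$, which is independent of $\lambda$ by \cite{Se81}. We would then use the splitting field $E_1$ of finitely many $P_v$ (chosen by a Chebotarev/density argument so that their eigenvalues generate the traces of all constituents). This ensures that each $F_{\lambda,i}$ is contained in $E_{1,\lambda_1}$. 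For the Brauer part, we would take $E'\supset E_1$ of degree divisible by $n!$ at every place, e.g. via Grunwald--Wang, with a cyclic extension of degree $n!$ whose local degree is $n!$ at all places over $\Pi_E$. Since there are infinitely many places, we would instead argue that the Brauer obstruction is a global class: the semisimple $E$-pseudocharacter would define a global central simple algebra, which is split at almost all places. We would then split the finitely many remaining ones. \textbf{This is the main obstacle:} making the Schur-index obstruction uniform in $\lambda$. We expect to handle it by showing that the obstruction classes at $\lambda$ come from a single class over $E_1$, or vanish outside a finite set of $\lambda$, using the bounded index and the fact that local Brauer groups are $\mathbb{Q}/\mathbb{Z}$ with index $n$ killed by degree-$n$ local extensions.

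\textbf{Step 3 (splitting of $\bG_{\lambda'}$).} With $\rho_{\lambda'}$ now valued in $\GL_n(E'_{\lambda'})$, $\bG_{\lambda'}^\circ$ has a maximal torus whose splitting is detected by Frobenius eigenvalues. By Serre's argument, a Frobenius torus for a suitable $v$ is maximal for all $\lambda'$ at once, and its character group is generated by the eigenvalues of $P_v$. Enlarging $E'$ by the splitting fields of such $P_v$ splits this torus. Since $\bG_{\lambda'}$ is then a reductive group containing a split maximal torus, it splits; this gives (ii). Assertion (iii) is Step 1 over $E'$, and (i) follows because compatibility is inherited from $\{\rho_\lambda\}$.
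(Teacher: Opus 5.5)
\textbf{There is a genuine gap, at exactly the step you flag as the main obstacle.} You never produce a single finite $E'/E$ that kills the Schur-index obstruction for all $\lambda\in\Pi_E$ at once, and neither of your remedies works.

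First, there is no global central simple algebra over a number field attached to the system. Only the Frobenius characteristic polynomials lie in $E[T]$. For general $g\in\Gal_K$ the determinant law of $\rho_\lambda$ takes values only in $E_\lambda$ and depends on $\lambda$. So the algebras $S_{E_\lambda}=\prod_i S_i$ of \S\ref{ssec:algmon} are not completions of a common object; constructing one would be a motivic statement.

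Second, Grunwald--Wang cannot help. By Chebotarev, every finite $E'/E$ has a positive-density set of totally split places, so no $E'$ has local degree $n!$ at all places of, e.g., $\Pi_E=\Sigma_E$. Your inclusion $F_{\lambda,i}\subset E_{1,\lambda}$ is likewise asserted without argument.

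The missing idea is a splitting criterion that can be checked on a fixed finite set of Frobenius elements. If some element of the central simple $F_i$-algebra $S_i$ has reduced characteristic polynomial with a \emph{simple} root in $F_i$, then $S_i\simeq M_{d_i}(F_i)$, because eigenvalue multiplicities are multiples of the index (\cite[Theorem 4(a)]{Yu13}).

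In the connected case, \cite[\S7]{LP92} gives a finite set $B$ of places such that for every $\lambda$ some $\Frob_v$ with $v\in B$ is $\Gamma$-regular. Its eigenspaces are then the weight spaces of a maximal torus. A highest weight of each irreducible constituent yields an eigenvalue of multiplicity one in each $S_i$, and this eigenvalue lies in $F_i$ once $E$ contains the roots of $P_v$ for $v\in B$.

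Your Step 3 observation, that a suitable Frobenius torus is maximal for all $\lambda$ not above its residue characteristic, could probably play the same role using two places of distinct residue characteristics. But you never connect it to the Brauer obstruction. Note also that (i) does not require $F_i=E_\lambda$: Weil restriction from $F_i$ already realizes $\rho_\lambda$ over $E_\lambda$, and (iii) then follows from the splitness in (ii).

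\textbf{A second gap is the disconnected case.} You pass to $K'=K^{\conn}$ but never return to $\Gal_K$. Descending $\rho_\lambda|_{\Gal_{K'}}$ to $E'_{\lambda'}$ with absolutely irreducible factors does not give the same for $\rho_\lambda$. The finite quotient $\Gal(K'/K)$ can contribute its own Schur indices, as for Artin representations.

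The paper handles this as follows. It replaces $\rho_\lambda$ by the strongly rational system $\Ind_{K^{\conn}}^K(\rho_\lambda|_{\Gal_{K^{\conn}}})$, which contains $\rho_\lambda$. It then shows via Clifford theory (Proposition~\ref{Clifford}) that the irreducible factors of this system are absolutely irreducible once $E$ is large enough that the finitely many irreducible projective representations of subgroups of $\Gal(K^{\conn}/K)$ are realizable over $E$.

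Your Step 3 argument for (ii) is essentially sound, but it presupposes (i). A maximal torus of $\bG_{\lambda'}^\circ$ that is the Zariski closure of the group generated by an element diagonalizable over $E'_{\lambda'}$ is split; the paper argues similarly via the centralizer of $t_\lambda$.
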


For each $\lambda\in\Pi_E$, decompose $\rho_\lambda$  %over $\Elambda$ 
into a direct sum of irreducible representations:
\begin{equation}\label{decompose}
\rho_\lambda%\otimes \overline E_\lambda
=\bigoplus_i \rho_{\lambda,i}^{m_{\lambda,i}}
\end{equation}
so that $\rho_{\lambda,i}$ is irreducible for each $i$, and if $i\neq j$ 
then $\rho_{\lambda,i}$ and $\rho_{\lambda,j}$ are non-isomorphic.
Denote  by $\bar\rho_{\lambda,i}$ the \emph{residual representation} of $\rho_{\lambda,i}$\footnote{It is defined as 
the semisimplification of the reduction of $\rho_{\lambda,i}$ with respect to an integral lattice of the representation 
space. This is independent of the choice of the lattice by the Brauer-Nesbitt theorem.}.  One expects that the following statements hold.

\begin{enumerate}[(i)]
\item For all but finitely many $\lambda$, the residual representations $\bar\rho_{\lambda,i}$ remain irreducible. 
\item For all but finitely many $\lambda$, the residual representations $\bar\rho_{\lambda,i}$ and $\bar\rho_{\lambda,j}$
are non-isomorphic if $i\neq j$. 
\end{enumerate}
When \eqref{cs} is %a \emph{strictly compatible system} in the sense of \cite[$\mathsection5$]{BLGGT14}),
strongly $E$-compatible and satisfies certain local conditions,
statement (i) holds for those irreducible factors $\rho_{\lambda,i}$ 
with type A algebraic monodromy groups (e.g., when $\dim \rho_{\lambda,i}\leq 3$) \cite[Theorem 1.2]{Hu23}.
%For a subset $\Pi\subset \Sigma_\Q$, define $\Sigma_E(\Pi):=\{\lambda\in\Sigma_E: ~\lambda~\text{divides}~\ell\in\Pi\}$.
We present a density one result of the above expectation.

\begin{thm}\label{main2}
Let $K$ be a number field, $\Pi\subset\Sigma_\Q$ be a Dirichlet density one subset,
 and $\{\rho_\lambda: \Gal_K\to \GL_n(\overline E_\lambda)\}_{\Sigma_E(\Pi)}$ 
be a semisimple $E$-rational compatible system of $K$.
There exists a Dirichlet density one subset $\Pi'\subset\Pi$ such that 
the following assertions hold for the decomposition \eqref{decompose}. 
\begin{enumerate}[(i)]
\item For all $\lambda\in\Sigma_E(\Pi')$, the residual representations $\bar\rho_{\lambda,i}$ remain irreducible.
\item For all $\lambda\in\Sigma_E(\Pi')$, the residual representations $\bar\rho_{\lambda,i}$ and $\bar\rho_{\lambda,j}$
are non-isomorphic if $i\neq j$. 
\end{enumerate}
\end{thm}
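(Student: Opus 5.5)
First, I would use Theorem~\ref{coeffd} to reduce to a convenient situation. Pass to a finite extension $E'/E$ such that the system descends to a strongly $E'$-rational compatible system $\{\rho_{\lambda'}:\Gal_K\to\GL_n(E'_{\lambda'})\}$. In this system every irreducible factor is absolutely irreducible and each $\bG_{\lambda'}$ is split. Density-one sets of rational primes are unaffected by this base change, and irreducibility over $\overline{E}_\lambda$ can be checked after extending scalars. So it suffices to prove the theorem for the descended system. Next I would decompose $\rho_{\lambda'}$ with respect to $\bG_{\lambda'}$. Then $\rho_{\lambda',i}$ and its multiplicity correspond to the irreducible constituents of the tautological representation of $\bG_{\lambda'}\subset\GL_{n}$.

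The main tool is Larsen's theorem on the ``maximality'' of the image for a density-one set of primes. For $\ell$ in a density-one set $\Pi_1$, the reduction of $\rho_{\lambda}(\Gal_K)$ contains, up to bounded index, the group $\underline{G}_\lambda(\F_\ell)^{\der}$ (or its image from the simply connected cover). Here $\underline G_\lambda$ is a reductive smooth model over $\mathcal{O}_{\lambda}$ of $\bG_\lambda$, which exists because the image is hyperspecial for such $\ell$. I would first treat the connected case by passing to a finite extension $K'/K$ with $\bG_\lambda$ connected, uniformly in $\lambda$ by Serre's result that $\pi_0$ is independent of $\lambda$. The component group will be handled at the end by Clifford theory.

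The proof then compares the residual representation with the reduction of the representation of the reductive group scheme. For a large density-one set of $\ell$, the highest weights of $\rho_{\lambda,i}$ restricted to $\bG^{\der}$ are bounded independently of $\ell$, since they are constituents of the fixed-dimensional representation. For $\ell$ larger than these bounds, the restricted irreducible representations of $\underline{G}^{\sc}(\F_\ell)$ with $\ell$-restricted highest weight stay irreducible (Steinberg). Distinct highest weights give non-isomorphic residual restrictions. This yields irreducibility on the derived part.

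The central torus part is controlled via Theorem~\ref{main1}. The potentially abelian quotient forms a strongly rational compatible system of potentially abelian representations, coming from Serre's theory of locally algebraic abelian representations. Two factors $\rho_{\lambda,i},\rho_{\lambda,j}$ with the same derived-part weight differ by a character of the central torus that is nontrivial as an algebraic character. Its reduction modulo $\lambda$ is nontrivial for a density-one set of $\lambda$, by Chebotarev applied to the abelian compatible system: a nontrivial algebraic Hecke character has Frobenius values whose difference from $1$ has finitely many prime divisors.

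The main obstacle is making Larsen's density-one maximality and the smooth model uniform across all factors simultaneously while keeping the weights $\ell$-restricted. I would handle this by applying the argument to the single semisimple representation $\rho_\lambda$ and intersecting finitely many density-one sets. Finally, Clifford theory recovers irreducibility over $K$ from $K'$. The induced pieces stay irreducible and distinct because the $\pi_0$-orbits of the constituents are preserved under reduction, since the weights are distinct and $\ell$ is large.
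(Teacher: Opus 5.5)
The derived-group part of your plan (Larsen's hyperspecial image, then Steinberg restriction for $\ell$ large compared with highest weights bounded in terms of $n$) is a reasonable hands-on substitute for Theorem~\ref{gprepn}. The gap is in the toric step.

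\textbf{The toric step is not uniform in $\lambda$.} Consider two constituents of $\rho_\lambda|_{\Gal_{K'}}$ with the same derived-group weight. The twisting character $\chi_\lambda$ with $\rho_{\lambda,j}\simeq\rho_{\lambda,i}\otimes\chi_\lambda$ is a character of $\bG_\lambda^{\mathrm{tor}}$, and it depends on $\lambda$. The decomposition \eqref{decompose}, the pair $(i,j)$, and $\chi_\lambda$ have no a priori relation across different $\lambda$. Theorem~\ref{main1}(ii),(iii) lets you regard each $\chi_\lambda$ as the $\lambda$-adic member of the compatible system attached to \emph{some} character of the $E$-model $\bG^{\ab}$ (over $\overline E$). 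But which character it is may change with $\lambda$, and infinitely many could occur.

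The fact ``a fixed nontrivial Hecke character has nontrivial reduction at all but finitely many $\lambda$'' then proves nothing. A countable union of finite exceptional sets can be all of $\Sigma_E$; compare $\epsilon_\ell^{\ell-1}$, a nontrivial character of $\mathbb{G}_m$ with trivial reduction mod $\ell$. What is needed is a bound uniform in $\lambda$.

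In the paper this uniformity comes from \cite[\S3.5]{PSW18}. After Theorem~\ref{coeffd}(i) and restriction of scalars to $\Q$, Theorem~\ref{gprepn} needs $\Gamma^{\mathrm{tor}}$ to have index at most $N$ in the maximal compact subgroup of $\bG_\ell^{\mathrm{tor}}(\Q_\ell)$ for all $\ell$; this is Theorem~\ref{toric}. The paper removes (HT) by noting that in the connected case $\bG_\ell^{\mathrm{tor}}=\bG_\ell^{\ab}$. So the toric image is that of the abelian system $\{\phi_\ell^{\ab}\}$ of Theorem~\ref{main1}(ii), which satisfies (HT) automatically. Bounded index, together with $\dim V$ small compared to $\ell$ in Theorem~\ref{gprepn}, is what rules out $\bar\chi_\lambda=1$ for large $\ell$.

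\textbf{How to repair your route.} It can be fixed without Theorem~\ref{toric}:
\begin{enumerate}[(1)]
\item The value $\chi_\lambda(\Frob_w)$ is an eigenvalue of $(\rho_\lambda\otimes\rho_\lambda^\vee)(\Frob_w)$. Hence it lies in the finite, $\lambda$-independent set $R_w$ of ratios of roots of $P_w(T)$.
\item Since $\{\phi^{\ab}_\lambda\}$ comes from a Serre group with image $\bG^{\ab}$ (Theorem~\ref{converse}), there is a finite set of places whose Frobenius elements detect every nontrivial character of $\bG^{\ab}$. Such a set exists because these Frobenius elements are $E$-rational and generate a Zariski-dense subgroup. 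A character whose values on all of them are roots of unity (necessarily in a fixed number field) therefore has finite order.
\item Take two such sets with disjoint residue characteristics. Then $\bar\chi_\lambda=1$ forces $\lambda\mid r-1$ for one of finitely many $r\in R_w\smallsetminus\{1\}$, which excludes only finitely many $\ell$.
\end{enumerate}
This input is also needed for (i) once $\bG_\lambda$ is disconnected. For example, $\rho_{\lambda,i}=\Ind_{K'}^{K}\chi$ requires $\bar\chi\neq\bar\chi^{\sigma}$.

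\textbf{The Clifford step.} Its justification is off. Distinct $\rho_{\lambda,i},\rho_{\lambda,j}$ may have isomorphic restrictions to $\Gal_{K'}$ (twists by characters of $\Gal(K'/K)$), and Clifford multiplicities may exceed one, so ``distinct weights'' is not the reason. What works is the connected case of (i) and (ii) together with $\ell\nmid[K':K]$. Then the (projective) representations of subgroups of $\Gal(K'/K)$ on the multiplicity spaces reduce irreducibly, and invariants keep their dimension.
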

Theorem~\ref{main2} is obtained by Patrikis-Snowden-Wiles \cite[Theorem 1.2]{PSW18} if the compatible system 
is strongly $E$-compatible and
satisfies the following Hodge-Tate condition:

\begin{enumerate}
\item[(HT)] There is a finite multi-set $I$ of integers such that 
whenever $v\notin S$ is a place of $K$ with residue characteristic equal to that of $\lambda\in\Sigma(\Pi)$,
then $\rho_\lambda$ is crystalline at $v$ with Hodge-Tate weights in the multi-set $I$.
\end{enumerate}

By using Theorem \ref{main1} and Theorem \ref{coeffd}(i), 
we deduce that the two conditions can be omitted in their proof.

\begin{remark}
In \cite[Lemma 7.1.3]{ACC+23}, it is stated that ``if $\mathcal R=\{r_\lambda\}$ is an extremely weakly compatible system of rank $2$
and $\mathcal R$ is irreducible, then for all $\ell$ in a set of Dirichlet density $1$ and all $\lambda|\ell$,
the residual representation $\bar{r}_\lambda$ is absolutely irreducible.''
This result is recovered as a special case of our Theorem \ref{main2}(i) for $n=2$.
\end{remark}

\subsection{Organization of the article}\label{s1.4}
In section 2, we describe several  constructions related to change of coefficients 
for algebraic morphisms and Galois representations, which are fundamental to the rest of the article.
In section 3, we utilize Chenevier's work \cite{Ch14} to construct the algebraic monodromy groups 
$\bG_{\rho_\lambda,E_\lambda}$ defined over $E_\lambda$ 
for a semisimple $E$-rational compatible system $\{\rho_\lambda\}$,
 and we establish some natural properties of these groups. 
Building on this construction, we
prove Theorem \ref{coeffd} by employing \cite{LP92},\cite{Yu13},\cite{Cl37}.
In section 4, we present the main results of the 
abelian theory of $\ell$-adic representations \cite{Se98}
and the recent developments on \emph{weak abelian direct summands} \cite{BH25}; the latter serves as a motivation of this article.
Next in section 5, we investigate the relationship between 
semisimple %\textcolor[rgb]{0,0,1}
{potentially abelian} $\ell$-adic representations and $\ell$-adic 
realizations of some \emph{CM motives with coefficients}, using \cite{DMOS82},\cite{Sc88},
and we prove a $\lambda$-independence result on the algebraic monodromy groups (Theorem \ref{pabindep}).
By integrating the results from section 2 through 5, we establish Theorem \ref{main1} in section 6. 
Finally in section 7, we deduce Theorem \ref{main2} following the strategy of \cite{PSW18},
 in conjunction with Theorems \ref{coeffd} and \ref{main1}.

\section{Basic definitions and constructions}\label{sBasic}
We present some basic definitions and constructions of representations of $\Gal_K$ and algebraic groups.
Let $K$ and $E$ be number fields. 
The notation in subsection \ref{s1.1} remains in force.

\subsection{$E$-rationality}\label{sErat}
An $\ell$-adic representation of $K$ is a continuous group homomorphism
\begin{equation}\label{rho}
\rho_\ell:\Gal_K\to\GL_n(\overline \Q_\ell).
\end{equation}
An embedding $E\to\overline\Q_\ell$ (e.g., when $E$ is a subfield of $\overline\Q_\ell$) defines a place $\lambda\in\Sigma_E$ above $\ell$,
inducing an identification $\Elambda \simeq \overline\Q_\ell$. 
If we fix a choice of embedding, 
we obtain a $\lambda$-adic representation
$$\rho_\lambda:\Gal_K\stackrel{\rho_\ell}{\rightarrow}\GL_n(\overline \Q_\ell)\stackrel{\simeq}{\rightarrow}\GL_n(\Elambda)$$
so that the two representations $\rho_\ell$ and $\rho_\lambda$ are identified.
In this case, $\rho_\ell$ is said to be 
\emph{$E$-rational} if there is a finite $S\subset\Sigma_K$
such that $\rho_\ell$ is unramified outside $S$ and 
\begin{equation}\label{char}
\det(\rho_\lambda(\mathrm{Frob}_v)-T\cdot\text{Id})\in E[T]
\end{equation}
for any $v\in\Sigma_K\backslash S$; $\rho_\ell$ is said to be \emph{strongly $E$-rational} if it is $E$-rational and 
up to some change of coordinates,
$\rho_\lambda(\Gal_K)\subset \GL_n(E_\lambda)$.

One sees that the $E$-rational compatible system \eqref{cs}
is naturally a family of $E$-rational representations such that the polynomials \eqref{char} satisfy a certain compatibility condition.
The category of $E$-rational $\ell$-adic representations of $K$ (resp. $E$-rational compatible systems of $K$ indexed by $\Pi_E$)
is closed under taking (finite) direct sum, tensor product, symmetric powers, alternating powers, and dual. 
 $E$-rationality is also preserved under restriction to $\Gal_L\subset\Gal_K$ for $L\supset K$ a finite extension. The last two assertions also hold with strongly $E$-rational in place of $E$-rational. Moreover, we have the following uniqueness result.

\begin{thm}\label{unique}(see \cite[Chap. I, $\mathsection2.3$, Theorem]{Se98})
Let $\{\rho_\lambda:\Gal_K\to\GL_n(\Elambda)\}_{\Sigma_E}$ and $\{\rho'_\lambda:\Gal_K\to\GL_n(\Elambda)\}_{\Sigma_E}$
be two semisimple $E$-rational compatible systems. If $\rho_{\lambda_0}\simeq \rho'_{\lambda_0}$
for some $\lambda_0\in\Sigma_E$, then $\rho_{\lambda}\simeq \rho'_{\lambda}$ for all $\lambda\in\Sigma_E$. 
The assertion also holds for semisimple strongly $E$-rational systems 
with isomorphism $\rho_{\lambda_0}\simeq \rho'_{\lambda_0}$  over $E_{\lambda_0}$.
\end{thm}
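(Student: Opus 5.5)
The plan is the classical Chebotarev–Brauer–Nesbitt argument, following \cite[Chap. I, $\mathsection2.3$]{Se98}.

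\emph{Step 1: Frobenius data at all places.} Both systems are $E$-rational compatible, so there is a finite set $S$ (the union of the two exceptional sets) with the following property. For $v\notin S$ the characteristic polynomials $P_v(T)$ and $P'_v(T)$ lie in $E[T]$ and do not depend on $\lambda$, as long as $v\notin S_\ell$. The hypothesis $\rho_{\lambda_0}\simeq\rho'_{\lambda_0}$ gives $P_v=P'_v$ for every $v\notin S\cup S_{\ell_0}$. Now fix an arbitrary $\lambda$ with residue characteristic $\ell$. For $v\notin S\cup S_\ell\cup S_{\ell_0}$ we get
\[
\det(\rho_\lambda(\Frob_v)-T)=P_v(T)=P'_v(T)=\det(\rho'_\lambda(\Frob_v)-T).
\]
The set $S\cup S_\ell\cup S_{\ell_0}$ is finite.

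\emph{Step 2: density.} Consider $\rho_\lambda\oplus\rho'_\lambda$. It is continuous and unramified outside a finite set $T$. By the Chebotarev density theorem, the Frobenius conjugacy classes $\Frob_v$ with $v\notin T\cup S\cup S_{\ell_0}$ are dense in $\Gal_K$; discarding finitely many places does not affect density. The function $g\mapsto\det(\rho_\lambda(g)-T)-\det(\rho'_\lambda(g)-T)$ is continuous with coefficients in $\Elambda$. It vanishes on this dense set, so it vanishes everywhere. Hence $\rho_\lambda$ and $\rho'_\lambda$ have the same characteristic polynomial at every $g\in\Gal_K$.

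\emph{Step 3: Brauer–Nesbitt.} Both representations are semisimple and the coefficient field $\Elambda$ has characteristic $0$. Equal characteristic polynomials, hence equal traces, therefore give $\rho_\lambda\simeq\rho'_\lambda$ by Brauer–Nesbitt, equivalently by linear independence of characters of the group algebra $\Elambda[\Gal_K]$. Continuity is not needed at this stage.

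\emph{Step 4: the strongly rational case.} Now $\rho_\lambda,\rho'_\lambda$ take values in $\GL_n(E_\lambda)$ and are semisimple over $E_\lambda$. Steps 1–2 give equal characteristic polynomials on all of $\Gal_K$. Brauer–Nesbitt over the characteristic-zero field $E_\lambda$ then yields an isomorphism already over $E_\lambda$. Equivalently, if the two are isomorphic over $\Elambda$, they are isomorphic over $E_\lambda$, since $\Hom$-spaces commute with field extension and semisimple modules over a characteristic-$0$ field are determined by their characters.

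\emph{Main obstacle.} The only delicate point is the bookkeeping of exceptional places. One has to discard both $S_\ell$ and $S_{\ell_0}$ so that the equality $P_v=P'_v$, known from $\lambda_0$, can be transported to $\lambda$. The Chebotarev step then needs only that the remaining set of places has density one, which is clear because only finitely many places were removed.
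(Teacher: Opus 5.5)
Your argument is correct and is the standard Chebotarev-plus-Brauer--Nesbitt proof from Serre's book. The paper gives no proof of its own and only cites that result, so your route matches it; the discarding of the exceptional places $S\cup S_\ell\cup S_{\ell_0}$ and the descent of the isomorphism to $E_\lambda$ in the strongly rational case are both handled correctly.
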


\subsection{Weil restriction of scalars}\label{sExt}
Let $F'/F$ be a finite extension of fields. 
By \emph{Weil restriction of scalars}, there exists a linear algebraic group $\mathrm{Res}_{F'/F}\GL_{k,F'}$ 
defined over $F$ such that 
$$\mathrm{Res}_{F'/F}\GL_{k,F'}(A)=\GL_k(A\otimes F')$$
for any commutative $F$-algebra $A$. Weil restriction is right adjoint to scalar extension \cite[Chap. V, Proposition 5.1]{Mi12}, i.e., for $F$-group schemes $\bG$ there is a natural bijection %canonical correspondence
\begin{equation}\label{adjgp}
 \iota:\mathrm{Hom}_{F'}(\bG\times F',\GL_{k,F'})  \stackrel{\simeq}{\rightarrow} \mathrm{Hom}_F(\bG, \mathrm{Res}_{F'/F}\GL_{k,F'}).
\end{equation}
%for any affine $F$-group scheme $\bG$.% \cite[Chap. V, Proposition 5.1]{Mi12}.

For instance by choosing an $F$-basis of $F'$, one may functorially identify $\GL_k(A\otimes F')$ with a subgroup of $\GL_{k[F':F]}(A)$, and thereby $\mathrm{Res}_{F'/F}\GL_{k,F'}$ becomes a subgroup of $\GL_{k[F':F],F}$.
%By identifying $\mathrm{Res}_{F'/F}\GL_{k,F'}$ as a subgroup of $\GL_{k[F':F],F}$,
This yields for any $F'$-representation $\phi':\bG\times F'\to \GL_{k,F'}$ in the left-hand side of \eqref{adjgp}, 
an $F$-representation of $\bG$ by %as follows:
\begin{equation}\label{eq:RS}
\mathrm{Res}_{F'/F}(\phi'):=[\bG\stackrel{\iota(\phi')}{\longrightarrow} \mathrm{Res}_{F'/F}\GL_{k,F'}\subset \GL_{k[F':F],F}].
\end{equation}

Here are two useful facts.

\begin{prop}\label{see'}
The $F$-representation $\mathrm{Res}_{F'/F}(\phi'): \bG(F) \to \GL_{k[F':F]}(F)$ of the group $\bG(F)$
is equivalent to the $F$-representation on $(F')^k$ via $\phi'$.
\end{prop}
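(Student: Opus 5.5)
The plan is to unwind the adjunction \eqref{adjgp} explicitly on $F$-points and then check the matrix identification of $\GL_k(A\otimes F')$ inside $\GL_{k[F':F]}(A)$ for $A=F$.

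First, describe $\iota(\phi')$ on points. For an $F$-algebra $A$ and $g\in\bG(A)$, the image of $g$ in $(\bG\times F')(A\otimes_F F')$ is $g\otimes 1$. The adjunction is then given by
$$\iota(\phi')(g)=\phi'(g\otimes 1)\in\GL_k(A\otimes_F F')=\mathrm{Res}_{F'/F}\GL_{k,F'}(A).$$
This is the standard description of the unit/counit of the Weil restriction adjunction, as in \cite[Chap. V]{Mi12}. Specializing to $A=F$, we have $A\otimes_F F'=F'$, so for $g\in\bG(F)$ we get $\iota(\phi')(g)=\phi'(g)\in\GL_k(F')$, which is just the matrix by which $g$ acts on $(F')^k$ via $\phi'$.

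Second, make the embedding $\mathrm{Res}_{F'/F}\GL_{k,F'}\subset\GL_{k[F':F],F}$ explicit. Fix an $F$-basis $e_1,\dots,e_d$ of $F'$, where $d=[F':F]$. Then $(A\otimes_F F')^k$ is a free $A$-module with basis $\{e_j\epsilon_i\}$, where $\epsilon_i$ are the standard basis vectors. An element of $\GL_k(A\otimes F')$ acts $A$-linearly on this module, and its matrix in the chosen basis is the image in $\GL_{kd}(A)$; this is functorial in $A$. For $A=F$, the matrix of $\phi'(g)$ becomes the matrix of the $F$-linear automorphism of $(F')^k$ given by $v\mapsto\phi'(g)v$, written in the $F$-basis $\{e_j\epsilon_i\}$.

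Combining the two steps, $\mathrm{Res}_{F'/F}(\phi')(g)$ is exactly the matrix of $\phi'(g)$ acting on the $F$-vector space $(F')^k$, computed in a fixed basis. The coordinate isomorphism $F^{kd}\cong(F')^k$ is therefore an equivalence of $F$-representations of $\bG(F)$. A different choice of basis changes the result only by conjugation, so the equivalence class is well defined.

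The only delicate point is confirming that the bijection $\iota$ in \eqref{adjgp} really is $g\mapsto\phi'(g\otimes1)$, and not some twist by a Galois conjugate. I would verify this from the construction in \cite{Mi12}: $\iota(\phi')$ is the composite $\bG\to\mathrm{Res}_{F'/F}(\bG\times F')\to\mathrm{Res}_{F'/F}\GL_{k,F'}$, where the first map is the adjunction unit $g\mapsto g\otimes1$. Everything else is routine linear algebra.
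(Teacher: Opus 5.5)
The paper gives no proof of this proposition; it is listed as one of two ``useful facts''. Your argument is a correct verification of it. On $F$-points the adjunction unit sends $g\in\bG(F)$ to $g\in\bG(F')$, so $\iota(\phi')(g)=\phi'(g)\in\GL_k(F')$. The chosen $F$-basis of $F'$ then turns this into the matrix of the $F$-linear action of $\phi'(g)$ on $(F')^k$, which is exactly the claimed equivalence.

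The point you flag as delicate does not affect the conclusion. Suppose $\iota$ were twisted by some $\tau\in\mathrm{Aut}(F'/F)$, so that $g\in\bG(F)$ acted by $\tau(\phi'(g))$. The coordinatewise map $\tau\colon (F')^k\to(F')^k$ is an $F$-linear bijection intertwining $\phi'$ with $\tau\circ\phi'$. Hence the resulting $F$-representations of $\bG(F)$ would still be equivalent.
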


When $F'/F =E'/E$  is a finite extension of number fields,
one has the following.

\begin{prop}\label{see}
For $\lambda\in\Sigma_E$, the representations $\mathrm{Res}_{E'/E}(\phi'): \bG(E_\lambda) \to \GL_{k[E':E]}(E_\lambda)$ and 
$$\bG(E_\lambda)\stackrel{\Delta}{\longrightarrow}\prod_{\lambda'|\lambda}\bG(E'_{\lambda'})
\stackrel{\prod_{\lambda'|\lambda}(\phi'\otimes E'_{\lambda'})}{\longrightarrow}\prod_{\lambda'|\lambda}\GL_k(E'_{\lambda'})
=\mathrm{Res}_{E'/E}\GL_{k,E'}(E_\lambda)\subset \GL_{k[E':E]}(E_\lambda)$$
are equivalent, where $\Delta$ denotes the diagonal map.
\end{prop}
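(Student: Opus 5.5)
The plan is to reduce Proposition~\ref{see} to Proposition~\ref{see'} by working over the completion. The key input is the standard decomposition of $F$-algebras
\begin{equation*}
E'\otimes_E E_\lambda \;\simeq\; \prod_{\lambda'|\lambda} E'_{\lambda'},
\end{equation*}
valid because $E'/E$ is separable. Applying the defining property of Weil restriction with $A=E_\lambda$ gives
\begin{equation*}
\mathrm{Res}_{E'/E}\GL_{k,E'}(E_\lambda)=\GL_k(E_\lambda\otimes_E E')\simeq\prod_{\lambda'|\lambda}\GL_k(E'_{\lambda'}).
\end{equation*}
This justifies the equality written in the statement. The embedding into $\GL_{k[E':E]}(E_\lambda)$ comes from the $E$-basis of $E'$, which is also an $E_\lambda$-basis of $E'\otimes_E E_\lambda$. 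Under it, the subgroup $\prod_{\lambda'}\GL_k(E'_{\lambda'})$ acts on the $E_\lambda$-vector space $(E'\otimes_E E_\lambda)^k\simeq\bigoplus_{\lambda'}(E'_{\lambda'})^k$ in the obvious way, block by block.

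Next I would identify the map $\iota(\phi')$ on $E_\lambda$-points. By construction of the adjunction \eqref{adjgp}, for any $E$-algebra $A$ and $g\in\bG(A)$, the element $\iota(\phi')(g)$ is $\phi'(g\otimes 1)\in\GL_k(A\otimes_E E')$. Here $g\otimes 1$ denotes the image of $g$ under $\bG(A)\to\bG(A\otimes_E E')$. I would take $A=E_\lambda$ and compose with the isomorphism $\bG(E_\lambda\otimes_E E')\simeq\prod_{\lambda'}\bG(E'_{\lambda'})$. This isomorphism holds because $\bG$ is an affine scheme and so sends products of algebras to products of points. Under it, the map $\bG(E_\lambda)\to\bG(E_\lambda\otimes E')$ becomes exactly the diagonal $\Delta$. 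Functoriality of $\phi'$ with respect to the projections $E_\lambda\otimes E'\to E'_{\lambda'}$ then shows that $\iota(\phi')(g)$ corresponds to $(\phi'\otimes E'_{\lambda'}(g))_{\lambda'}$.

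So the two representations coincide as maps into $\prod_{\lambda'}\GL_k(E'_{\lambda'})$. They are viewed inside $\GL_{k[E':E]}(E_\lambda)$ via two possibly different choices of $E_\lambda$-basis: the $E$-basis of $E'$ on one side, and bases adapted to the decomposition on the other. Different bases change the representation only by conjugation in $\GL_{k[E':E]}(E_\lambda)$, which gives the asserted equivalence.

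The main obstacle is bookkeeping rather than substance. One has to make sure the embedding $\mathrm{Res}\subset\GL_{k[E':E]}$ used in \eqref{eq:RS} is compatible, up to an $E_\lambda$-linear change of basis, with the product decomposition. I would handle this by noting that both embeddings arise from the same $E_\lambda$-module $(E'\otimes_E E_\lambda)^k$ with its natural action. I would check the case $k=1$ first and then take the $k$-fold direct sum.
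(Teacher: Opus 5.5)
Your argument is correct. The paper gives no proof of this proposition; it lists it as one of two standard facts about Weil restriction. What you write is the natural verification: the canonical isomorphism $E'\otimes_E E_\lambda\simeq\prod_{\lambda'\mid\lambda}E'_{\lambda'}$, the description $\iota(\phi')(g)=\phi'(g\otimes 1)$ of the adjunction on $A$-points with $A=E_\lambda$, and naturality of $\phi'$ with respect to the projections $E'\otimes_E E_\lambda\to E'_{\lambda'}$.

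Two small remarks. First, suppose $\prod_{\lambda'}\GL_k(E'_{\lambda'})$ is identified with $\mathrm{Res}_{E'/E}\GL_{k,E'}(E_\lambda)$ via this canonical isomorphism and embedded in $\GL_{k[E':E]}(E_\lambda)$ using the same $E$-basis of $E'$, as the statement does. Then the two representations are literally equal, and the change-of-basis step is needed only if you embed the product block-diagonally instead. Second, the reduction to the preceding proposition on $F$-points that you announce at the start is never used in your argument, and it is not needed.
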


\subsection{Extension and restriction of scalars of compatible systems}\label{sExtcs}
Let $E'/E$ be a finite extension.
For a subset $\Pi_E\subset\Sigma_E$, we define $\Sigma_{E'}(\Pi_E):=\{\lambda'\in\Sigma_{E'}:~\lambda'~\text{divides}~\lambda\in\Pi_E\}$.

Given a strongly $E$-rational compatible system $\{\rho_\lambda: \Gal_K\stackrel{}{\rightarrow} \GL_n(E_\lambda)\}_{\Pi_E}$, the family
\begin{equation}\label{cs'}
\{\rho_{\lambda'}:=\rho_\lambda\otimes E'_{\lambda'}: \Gal_K\stackrel{\rho_\lambda}{\rightarrow} \GL_n(E_\lambda)
\hto\GL_n(E'_{\lambda'})\}_{\Sigma_{E'}(\Pi_E)}
\end{equation}
is a  strongly $E'$-rational compatible system of $K$ indexed by $\Sigma_{E'}(\Pi_E)$.
This construction is called the \emph{extension of scalars} of $\{\rho_\lambda\}$ with respect to $E'/E$.
Conversely, given a  strongly $E'$-rational compatible system 
$\{\rho_{\lambda'}: \Gal_K\stackrel{}{\rightarrow} \GL_n(E'_{\lambda'})\}_{\Sigma_{E'}(\Pi_E)}$ 
of $K$, the family
\begin{equation}\label{Rescs}
\{\rho_{\lambda}:=\prod_{\lambda'|\lambda}\rho_{\lambda'}: \Gal_K\stackrel{(\rho_{\lambda'})}{\longrightarrow}
\prod_{\lambda'|\lambda}\GL_n(E'_{\lambda'})=\mathrm{Res}_{E'/E}\GL_{n,E'}(E_\lambda)\subset \GL_{n[E':E]}(E_\lambda)\}_{\Pi_E}
\end{equation}
is a  strongly $E$-rational compatible system of $K$ indexed by $\Pi_E$. 
This construction is called the \emph{restriction of scalars} of $\{\rho_{\lambda'}\}$ with respect to $E'/E$.
Note that the property of being semisimple (resp. abelian, potentially abelian) of a compatible system 
is preserved under both constructions.

For each $\lambda'\in \Sigma_{E'}(\Pi_E)$, pick an field isomorphism 
$i_{\lambda'}:\overline E'_{\lambda'}\to \overline E_\lambda$
that is identity on $E_\lambda$. One can also extend a semisimple $E$-compatible system 
$\{\rho_\lambda: \Gal_K\stackrel{}{\rightarrow} \GL_n(\overline E_\lambda)\}_{\Pi_E}$
to a semisimple $E'$-compatible system by a change of coefficients:
\begin{equation}\label{cs''}
\{\rho_{\lambda'}: \Gal_K\stackrel{\rho_\lambda}{\rightarrow} \GL_n(\overline E_\lambda)
\stackrel{i_{\lambda'}^{-1}}{\rightarrow}\GL_n(\overline E'_{\lambda'})\}_{\Sigma_{E'}(\Pi_E)}.
\end{equation}
By the Brauer-Nesbitt theorem and semisimplicity, the isomorphism class of $\rho_{\lambda'}$ is independent of the choice of $i_{\lambda'}$.
If the system is furthermore strongly $E$-compatible, the constructions in \eqref{cs'} and \eqref{cs''} coincide
over the algebraic closures $\overline E'_{\lambda'}$.

\section{Coefficients of algebraic monodromy groups and compatible systems}\label{coeff}
Based on Chenevier \cite{Ch14}, we define algebraic monodromy groups $\bG_\lambda$ defined over $E_\lambda$
and prove  descent of coefficients (Theorem \ref{coeffd}) for semisimple $E$-compatible systems.

\subsection{Algebraic monodromy groups under $E$-rationality}\label{ssec:algmon}
Let $\rho_\ell:\Gal_K\to\GL_n(\overline \Q_\ell) $ be a semisimple $\ell$-adic representation of $K$, 
and let $F\subset\overline\Q_\ell$ be
a subfield containing $\Q_\ell$ such that 
\begin{equation}\label{eq:char}
\det(\rho_\ell(g)-T\cdot\text{Id})\in F[T]\quad\hbox{for all }g\in\Gal_K.
\end{equation}
The semisimple representation $\rho_\ell$ defines naturally a $\overline\Q_\ell$-valued \emph{determinant law} 
$$D_{\rho_\ell}:\overline\Q_\ell[\Gal_K]\stackrel{\rho_\ell}{\rightarrow}M_n(\overline\Q_\ell)\stackrel{\det}{\rightarrow}\overline\Q_\ell$$ 
in the sense of \cite[$\mathsection1.5$]{Ch14}.
By \eqref{eq:char} and \cite[Corollary 1.14]{Ch14}, the determinant law $D_{\rho_\ell}$ 
 factors through an (unique) $F$-valued determinant law:
\begin{equation}\label{Fdetlaw}
D_{\rho_\ell/F}:F[\Gal_K]\to F.
\end{equation} 
By \cite[Lemma 1.19(ii)]{Ch14}, the kernel of $D_{\rho_\ell/F}$, denoted $\mathrm{Ker}(D_{\rho_\ell/F})$, is the biggest two-sided ideal of $F[\Gal_K]$
such that $D_{\rho_\ell/F}$ factors through the quotient
\begin{equation}\label{quotS}
\pi_F:F[\Gal_K]\to F[\Gal_K]/\mathrm{Ker}(D_{\rho_\ell/F})=:S_F.
\end{equation}

The structural result \cite[Theorem 2.16]{Ch14} gives a description of the $F$-algebra $S_F$
and the factorization of \eqref{Fdetlaw} via~$S_F$.
In the present context, these results can be interpreted as follows.
\begin{enumerate}[(a)]
\item The algebra $S_F$ is isomorphic to a finite product $\prod_i S_i$ where each $S_i$ is a central simple algebra over a finite extension $F_i$ of $F$; we let $d_i$ be the degree of $S_i$ over $F_i$, so that $d_i^2=\dim_{F_i} S_i$, and we set $e_i:=[F_i:F]$.
\item For each $F$-embedding $\sigma:F_i\to\overline\Q_\ell$, choose an isomorphism $\alpha_\sigma:S_i\otimes_{F_i}\overline\Q_\ell\simeq M_{d_i}(\overline\Q_\ell)$, that induces an embedding $\iota_\sigma:S_i\to M_{d_i}(\overline\Q_\ell)$, and let $\Delta_{i,m}:S_i\to S_i^{m}$ be the diagonal embedding.
Then for each $i$ there is 
an integer $m_i>0$ such that 
\begin{enumerate}[(i)]
\item $\sum_i m_ie_id_i=n$, and
\item the $F$-linearization $F[\Gal_K]\to M_n(\overline\Q_\ell)$ 
of $\rho_\ell$ (i.e., $\sum a_i g_i\mapsto \sum a_i \rho_\ell(g_i)$)
is equivalent to the composition 
\begin{equation}\label{linearcompos}
\xymatrix{F[\Gal_K]\stackrel{\pi_F}{\rightarrow}S_F=\prod_i S_i \ar[r]^-{\oplus_i \Delta_{i,m_i}}  & \prod_i S_i^{m_i} \ar[rrr]^-{\oplus_i \oplus_{\sigma\in \Hom_F(F_i,\overline\Q_\ell)}  \iota_\sigma^{m_i}}&&& \prod_i (M_{d_i}(\overline\Q_\ell))^{m_ie_i} \hto M_n(\overline\Q_\ell).
%\ar@{^{ (}->}[r]^-{\mathrm{incl.}}&M_n(\overline\Q_\ell).
}
\end{equation}
\end{enumerate}
\end{enumerate}
%\begin{itemize}
%\item $S$ is isomorphic to a finite product $\prod_i S_i$ where each $S_i$ is a central simple algebra over a finite extension $F_i$ of $F$, and let $d_i$ be the degree of $S_i$ over $F_i$, so that $d_i^2=\dim_{F_i} S_i$, and $e_i:=[F_i:F]$.
%\item For each $F$-embedding $\sigma:F_i\to\overline\Q_\ell$, choose an isomorphism $\alpha_\sigma:S_i\otimes_{F_i}\overline\Q_\ell\cong M_{d_i}(\overline\Q_\ell)$, that induces an embedding $\iota_\sigma:S_i\to M_{d_i}(\overline\Q_\ell)$, and let $\Delta_{i,m}:S_i\to S_i^{m}$ be the diagonal embedding.
%Then for each $i$ there is an $F$-algebra homomorphism $r_i:F[\Gal_K]\to S_i$ and an integer $m_i>0$ such that (i) $\sum_i m_ie_id_i=n$ and (ii) the composition 
%\[\xymatrix{ F[\Gal_K]\ar[r]^-{\oplus_i r_i}   &\prod_i S_i \ar[r]^-{\oplus_i \Delta_{i,m_i}}  & \prod_i S_i^{m_i} \ar[rrr]^-{\oplus_i \oplus_{\sigma\in \Hom_F(F_i,\overline\Q_\ell)}  \iota_\sigma^{m_i}}&&& \prod_i (M_{d_i}(\overline\Q_\ell))^{m_ie_i} \ar@{^{ (}->}[rr]^-{\mathrm{inclusion}}&&M_n(\overline\Q_\ell) }\]
%is the linearization $F[\Gal_K]\to M_n(\overline\Q_\ell), \sum a_i g_i\mapsto \sum a_i \rho_\ell(g_i)$ of $\rho_\ell$.
%\end{itemize} 
Note that the assertion (b-ii) follows from the semisimplicity of both representations,
the fact that they 
have the same characteristic polynomials by \cite[Theorem 2.16]{Ch14}, and Brauer-Nesbitt.
%the semisimplicity of $\rho_\ell$: the representation $\rho_\ell$ and the composition \eqref{linearcompos} , 
%and so (b-ii) follows from semisimplicity and Brauer-Nesbitt. 
Note also that 
$S_i\otimes_F\overline\Q_\ell\simeq S_i\otimes_{F_i} F_i \otimes_F\overline\Q_\ell
\simeq \oplus_{\sigma\in\Hom_F(F_i,\overline\Q_\ell)} S_i\otimes_{F_i}^{\sigma}\overline\Q_\ell.$

Denote by $\underline{S_F}^\times$ (resp. $\underline{S_i}^\times$) the $F$-group scheme given on $F$-algebras 
$A$ by $A\mapsto (S_F\otimes_FA)^\times$ (resp. $A\mapsto (S_i\otimes_FA)^\times$).
By (b-ii), $\rho_\ell$ is equivalent to the composition
\begin{equation}\label{longcompos}
\xymatrix@C+.2pc{\Gal_K \stackrel{\pi_F}{\rightarrow}\underline{S}_F^\times(F)=  \prod_i \underline{S}_i^\times(F)  \, \ar[r]^-{\prod_i \Delta_{i,m_i}}  &\, \prod_i (\underline{S}_i^\times)^{m_i}(F) \;\ar[r]^-{\mathrm{via\,}F\to\overline\Q_\ell} &\;\prod_i (\underline{S}_i^\times)^{m_i}(\overline\Q_\ell) \hto \GL_n(\overline\Q_\ell).\!
%\ar@{^{ (}->}[r]^-{\mathrm{incl.}}&\GL_n(\overline\Q_\ell)].
}
\end{equation}

\begin{defi}\label{def:algmon}
Let $\rho_\ell:\Gal_K\to\GL_n(\overline\Q_\ell)$ be a semisimple $\ell$-adic representation satisfying \eqref{eq:char}.
The Zariski closure of the image %$({\prod_i \Delta_{i,m_i}} \circ {\prod_i r_i})  (\Gal_K) $
$\pi_F(\Gal_K)$ in the $F$-group scheme $\underline{S}_F^\times$ in \eqref{longcompos},
denoted by $\bG_{\rho_\ell,F}$, is called 
the algebraic monodromy group of $\rho_\ell$ defined over $F$.
\end{defi}

 \begin{remark}
As suggested by G.~Chenevier,
 the above construction can equally well be realized using the earlier work \cite{Ro96} of Rouquier
since we are in characteristic zero.
We follow \cite{Ch14} because we were more familiar with Chenevier's work.
%found the concise treatment in   \cite[Theorem 2.16]{Ch14} most helpful.}
\end{remark}

\begin{remark}
As suggested by A.~Shavali, $\bG_{\rho_\ell,F}$ can also be defined not using \cite{Ch14}.
More precisely, let $F'\supset F$ be a finite extension
such that $\rho_\ell$ is isomorphic to a representation \[\rho_\ell^{F'}:\Gal_K\to \GL_n(F')=(\Res_F^{F'}\GL_n)(F).\]
Then $\bG_{\rho_\ell,F}$ is isomorphic to the Zariski closure of $\rho_\ell^{F'}(\Gal_K)$ in $ \Res_F^{F'}\GL_n$ of 
$\rho_\ell^{F'}$.

\emph{This statement can be shown in two steps. Let the notation be as in (a) and (b) above, and suppose first that $F'$ contains all $F_i$ and splits all $S_i$ over $F_i$. 
Then $\prod_i (\underline{S}_i^\times)^{m_i}\times_{F}F'$ embeds into $\GL_n$, and by the universal property of the Weil restriction it
follows that $\rho_\ell^{F'}$ from above may be factored via $\prod_i (\underline{S}_i^\times)^{m_i}\to \Res_F^{F'}\GL_n$, and so in this case
the assertion is clear. Suppose now that $F'$ is arbitrary. Then one chooses $F''$ containing $F'$ and all $F_i$ and splitting all $S_i$.
In this case, using the first step, the claimed isomorphism will follow by comparing the construction via Weil restriction for $F'$ with that for $F''$. And 
for this comparison uses the canonical map $ \Res_F^{F'}\GL_n\to  \Res_F^{F''}\GL_n$ similarly to the first case.}

However the approach built on \cite{Ch14} is used in our proof below of Theorem~\ref{coeffd}, it simplifies the arguments 
for Proposition~\ref{prop:Glambda-andBC}, and it gives a conceptual definition of the semisimple algebra $S_F=\prod_i S_i$
that naturally arises from the linearization of $\rho_\ell$ over $F$ (via $D_{\rho_\ell/F}$).
\end{remark}
We present some natural properties of the (continuous) morphism $\Gal_K  \stackrel{\pi_F}{\rightarrow} \bG_{\rho_\ell,F}(F)$.

\begin{prop}\label{prop:Glambda-andBC}
Let $\rho_\ell:\Gal_K\to\GL_n(\overline\Q_\ell)$ be a semisimple $\ell$-adic representation satisfying \eqref{eq:char}.
The following assertions hold.
\begin{enumerate}[(i)]
\item There exists a faithful representation $j_F:\bG_{\rho_\ell,F}\times_F \overline\Q_\ell\hto \GL_{n,\overline\Q_\ell}$
such that 
$$\rho_\ell\simeq [\Gal_K  \stackrel{\pi_F}{\rightarrow} \bG_{\rho_\ell,F}(F)\hto \bG_{\rho_\ell,F}(\overline\Q_\ell)\stackrel{j_F}{\hto}\GL_n(\overline\Q_\ell)].$$
\item The $F$-group $\bG_{\rho_\ell,F}$ is an $F$-model of the algebraic monodromy group $\bG_\ell$ of $\rho_\ell$.
In particular, $\bG_{\rho_\ell,F}$ is reductive and $\pi_0(\bG_{\rho_\ell,F})\simeq \pi_0(\bG_\ell)$.
\item There is a finite extension $F'$ of $F$ such that $\bG_{\rho_\ell,F'}$ is a subgroup scheme of $\GL_{n,F'}$.
\item If $F'\subset \overline\Q_\ell$ is a subfield containing $F$,  
then there is a natural isomorphism $\iota_{F'/F}:\bG_{\rho_\ell,F}\times_FF'\stackrel{\simeq}{\rightarrow} \bG_{\rho_\ell,F'}$
such that $\pi_{F'}=\iota_{F'/F}\circ\pi_F$ and $j_F= j_{F'}\circ\iota_{F'/F}$.
%\item The $F$-group scheme $\bG_{\rho_\ell,F}$ is reductive.
%\item The group scheme $\bG_{\rho_\ell,F}$ is connected if and only if $\bG_{\rho_\ell,\overline \Q_\ell}$ is connected.
\item If $\rho_\ell(\Gal_K)\subset\GL_n(F)$ and $\bG_\ell$ denotes the Zariski closure of $\rho_\ell(\Gal_K)$ in $\GL_{n,F}$,
then  $\bG_{\rho_\ell,F}\simeq \bG_\ell$ and $\Gal_K  \stackrel{\pi_F}{\rightarrow} \bG_{\rho_\ell,F}(F)$ can be identified with
$\Gal_K  \stackrel{\rho_\ell}{\rightarrow} \bG_{\ell}(F)$.
\end{enumerate}
\end{prop}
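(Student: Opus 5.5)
The plan is to work throughout with the explicit composition \eqref{longcompos}. Write $\underline{S}_F^\times=\prod_i\underline{S}_i^\times$. Over $\overline\Q_\ell$ we have
\[S_i\otimes_F\overline\Q_\ell\simeq\bigoplus_{\sigma\in\Hom_F(F_i,\overline\Q_\ell)}S_i\otimes_{F_i}^{\sigma}\overline\Q_\ell,\]
and the isomorphisms $\alpha_\sigma$ identify this with $\prod_\sigma M_{d_i}(\overline\Q_\ell)$. Hence $\underline{S}_F^\times\times_F\overline\Q_\ell\simeq\prod_i\prod_\sigma\GL_{d_i,\overline\Q_\ell}$.

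For (i), let $J:\underline{S}_F^\times\times_F\overline\Q_\ell\to\GL_{n,\overline\Q_\ell}$ be the block-diagonal morphism sending the $(i,\sigma)$-factor diagonally into $m_i$ blocks of size $d_i$. Since every $m_i>0$, this $J$ is a closed immersion of group schemes. By (b-ii) and \eqref{longcompos}, $\rho_\ell\simeq J\circ\pi_F$ on $\Gal_K$. We then define $j_F$ as the restriction of $J$ to the closed subgroup $\bG_{\rho_\ell,F}\times_F\overline\Q_\ell$.

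For (ii) the key point is that Zariski closure commutes with flat base change. The base change $\bG_{\rho_\ell,F}\times_F\overline\Q_\ell$ is therefore the Zariski closure of $\pi_F(\Gal_K)$ in $\underline{S}_F^\times\times_F\overline\Q_\ell$. Indeed, the ideal of $F$-polynomials vanishing on a set of $F$-points generates, after $\otimes_F\overline\Q_\ell$, the ideal of $\overline\Q_\ell$-polynomials vanishing on that set; this is seen by writing a $\overline\Q_\ell$-polynomial in an $F$-basis of $\overline\Q_\ell$. Since $J$ is a closed immersion, $j_F$ maps this closure isomorphically onto the closure of $J(\pi_F(\Gal_K))$, which is conjugate to $\bG_\ell$. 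So $\bG_{\rho_\ell,F}$ is an $F$-model of $\bG_\ell$. Reductivity and the statement on $\pi_0$ are geometric properties, and $\bG_\ell$ is reductive because $\rho_\ell$ is semisimple.

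For (iv), the same flat-base-change argument shows that $\bG_{\rho_\ell,F}\times_FF'$ is the closure of $\pi_F(\Gal_K)$ in $\underline{S}_F^\times\times_FF'$. The main step is to identify $S_F\otimes_FF'$ with $S_{F'}$ compatibly with $\pi_F$ and $\pi_{F'}$.
\begin{itemize}
\item By uniqueness in \cite[Corollary 1.14]{Ch14}, $D_{\rho_\ell/F'}$ is the scalar extension of $D_{\rho_\ell/F}$.
\item $S_F\otimes_FF'$ is semisimple, since we are in characteristic $0$.
\item By \cite[Lemma 1.19]{Ch14}, the kernel of a determinant law on a semisimple quotient through which it factors is the radical, hence zero. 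So the kernel of $D_{\rho_\ell/F'}$ on $F'[\Gal_K]$ is exactly $\mathrm{Ker}(D_{\rho_\ell/F})\otimes_FF'$.
\end{itemize}
This yields a canonical isomorphism $S_F\otimes_FF'\simeq S_{F'}$, hence $\underline{S}_F^\times\times_FF'\simeq\underline{S}_{F'}^\times$ carrying $\pi_F$ to $\pi_{F'}$. Restricting to closures gives $\iota_{F'/F}$. The identity $j_F=j_{F'}\circ\iota_{F'/F}$ holds after choosing the $\alpha_\sigma$ compatibly, or at least up to conjugation, which is all that (i) requires. I expect this kernel/base-change compatibility to be the main technical obstacle.

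For (iii), take $F'$ finite over $F$ containing the Galois closures of all $F_i$ and splitting all the $S_i$. Then $S_{F'}\simeq\prod_i\prod_\sigma M_{d_i}(F')$, and the block-diagonal map of (i) is already defined over $F'$. It is a closed immersion $\underline{S}_{F'}^\times\hookrightarrow\GL_{n,F'}$, so $\bG_{\rho_\ell,F'}$ is a closed subgroup scheme of $\GL_{n,F'}$.

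For (v), if $\rho_\ell(\Gal_K)\subset\GL_n(F)$, then the $F$-linearization $F[\Gal_K]\to M_n(F)$ has image a semisimple $F$-algebra $B$. Its kernel is the kernel of $D_{\rho_\ell/F}$, since a semisimple algebra with a faithful determinant law has zero kernel by \cite[Lemma 1.19]{Ch14}. So $S_F\simeq B\subset M_n(F)$, and $\underline{S}_F^\times=\underline{B}^\times$ is a closed subgroup of $\GL_{n,F}$ through which $\rho_\ell$ factors as $\pi_F$. Taking closures of the image in $\underline{B}^\times$ and in $\GL_{n,F}$ gives the same group, yielding $\bG_{\rho_\ell,F}\simeq\bG_\ell$ with $\pi_F$ identified with $\rho_\ell$.
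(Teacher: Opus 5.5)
Your proposal follows the same route as the paper. The block-diagonal embedding coming from (b-ii) gives (i) and (ii), and your remark that Zariski closure of a set of $F$-points commutes with the base change $F\to\overline\Q_\ell$ usefully spells out what the paper calls immediate. Compatibility of $D_{\rho_\ell/F}$ and its kernel with scalar extension gives (iv), a common splitting field gives (iii), and identifying $S_F$ with the image of $F[\Gal_K]$ in $M_n(F)$ gives (v).

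The one genuine gap is in the step you flag yourself, the third bullet of (iv). The principle you attribute to \cite[Lemma 1.19]{Ch14} is that a determinant law on a semisimple algebra has kernel equal to the radical, hence zero. This is false. On $M_2(F)\times M_2(F)$ the law $(x,y)\mapsto\det x$ is a determinant law of degree $2$ with kernel $0\times M_2(F)$. The kernel contains the radical, but also every simple factor on which the law is trivial. So semisimplicity of $S_F\otimes_FF'$ does not by itself show that the law induced by $D_{\rho_\ell/F'}$ on $S_F\otimes_FF'$ has zero kernel. You still have to show that no simple factor of $S_F\otimes_FF'$ is invisible to this law.

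This is where $m_i\geq 1$ in (b-ii) is needed. Each $S_i\otimes_{F_i,\sigma}\overline\Q_\ell\simeq M_{d_i}(\overline\Q_\ell)$ occurs $m_i\geq1$ times, so the map $S_F\otimes_F\overline\Q_\ell\to M_n(\overline\Q_\ell)$ is injective. Hence so is $S_F\otimes_FF'\to M_n(\overline\Q_\ell)$, and the induced law is the restriction of the determinant of this faithful module. Suppose $x$ lies in its kernel. Then $\mathrm{Tr}(xy)=0$ for all $y\in S_F\otimes_FF'$, since this is the coefficient of $t$ in $D(1+txy)=1$. So every $z$ in the two-sided ideal generated by $x$ has $\mathrm{Tr}(z^k)=0$ for all $k\geq 1$. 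In characteristic $0$ such $z$ acts nilpotently on $\overline\Q_\ell^{\,n}$, hence is nilpotent by injectivity. A nil ideal of a semisimple algebra is zero, so $x=0$.

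This is exactly the correct statement you already use in (v), namely the determinant of a faithful semisimple module has zero kernel, and it is what the bullet should say. Alternatively, quote \cite[Lemma 2.8(i)]{Ch14} as the paper does. Or use that in characteristic $0$ the kernel is the left kernel of the trace pairing $(x,y)\mapsto\mathrm{Tr}_D(xy)$, which visibly commutes with field extension.

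With this repaired, the rest of your argument goes through. That includes (iii), which relies on $S_{F'}\simeq S_F\otimes_FF'$ from (iv).
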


\begin{proof}
Let $j_F$ be the map $\underline{S}_F^\times\times_F\overline\Q_\ell\hto\GL_{n,\overline\Q_\ell}$ in \eqref{longcompos}.
Since $\rho_\ell$ is equivalent to \eqref{longcompos}, assertion (i) holds by the definition of $\bG_{\rho_\ell,F}$.
Then (ii) and (iii) follow immediately from (i).

If $F'$ is a field such that $F\subset F'\subset\overline\Q_\ell$,
then $D_{\rho_\ell/F'}=D_{\rho_\ell/F}\otimes_F F'$ (by uniqueness of the $F'$-valued determinant law) and 
$\mathrm{Ker}(D_{\rho_\ell/F'})=\mathrm{Ker}(D_{\rho_\ell/F})\otimes_F F'$ (\cite[Lemma 2.8(i)]{Ch14})
so that $S_{F'}=S_F\otimes_F F'$ and $\pi_{F'}=\pi_{F}\otimes_F F'$.
These together with (b-ii) imply (iv).

If $\rho_\ell(\Gal_K)\subset\GL_n(F)$, then $D_{\rho_\ell/F}$ is the same as 
$F[\Gal_K]\stackrel{\rho_\ell}{\rightarrow}M_n(F)\stackrel{\det}{\rightarrow} F$ and $S_F$ is just the image 
of $F[\Gal_K]$ in $M_n(F)$. This implies (v).
%It follows from the characterizing properties of \cite[Theorem 2.16]{Ch14} that $S$, $\prod_i r_i$ and the tuple of $m_i$ is compatible with base change. This implies the compatibility with base extension. It remains to explain (i)--(iv): Concerning (i) observe first that reductivity descends under separable field extensions, and so by the part already shown, it suffices to prove that $\bG_{\rho_\ell,\overline\Q_\ell}$ is reductive. This however follows from the semisimplicity of $\rho_\ell$. Part (ii) follows from the fact that functor $\pi_0$ on group schemes is compatible with base change. Part (iii) can most easily be deduced from the structure theorem on $S$, by choosing for $F'$ a common splitting field for the finitely many central simple algebras $S_i$. 
\end{proof}

Let $\rho_\lambda:\Gal_K\to\GL_n(\overline E_\lambda)$
be a semisimple $E$-rational $\lambda$-adic representation.
Since $\rho_\lambda$ satisfies \eqref{eq:char} with $\overline\Q_\ell\simeq \Elambda$ and $F=E_\lambda$, the
algebraic monodromy group $\bG_{\rho_\lambda,E_\lambda}$ of $\rho_\lambda$ defined over $E_\lambda$ is well-defined.
If $\rho_\lambda(\Gal_K)\subset \GL_n(E_\lambda)$, then Proposition \ref{prop:Glambda-andBC}(v) asserts 
that $\bG_{\rho_\lambda,E_\lambda}$ is the usual algebraic monodromy group in $\GL_{n,E_\lambda}$.
For a semisimple $E$-rational compatible system, we have the following definition.

\begin{defi}\label{def:Glambda-Erat}
Let $\{\rho_\lambda: \Gal_K\stackrel{}{\rightarrow} \GL_n(\Elambda)\}_{\Pi_E}$ be a semisimple $E$-rational compatible system. 
Its associated system of algebraic monodromy groups defined over $E_\lambda$ 
is $\{\bG_\lambda\}_{\Pi_E}$ with $\bG_\lambda:=\bG_{\rho_\lambda,E_\lambda}$.
Denote the morphism $\Gal_K\stackrel{\pi_{E_\lambda}}{\rightarrow}\bG_{\rho_\lambda,E_\lambda}(E_\lambda)$ also by $\rho_\lambda$.
\end{defi}

\subsection{Proof of Theorem \ref{coeffd}}\label{Descend}
Let $\{\rho_\lambda:\Gal_K\to\GL_n(\overline E_\lambda)\}_{\Pi_E}$ be a semisimple $E$-compatible system
with $\{\bG_\lambda\}$ as the system of algebraic monodromy groups.
By Serre \cite{Se81}, 
there is a finite Galois extension $K^{\conn}/K$ that corresponds to 
the kernel of $\Gal_K\stackrel{\rho_\lambda}{\rightarrow} \bG_\lambda/\bG_\lambda^\circ$ for all $\lambda\in\Pi_E$.

By replacing $K$ with $K^{\conn}$, we first deal with the connected case (i.e., $\bG_\lambda$ is connected for all $\lambda$).
According to (b) and the map \eqref{linearcompos} from subsection \ref{ssec:algmon} 
%and the map \eqref{linearcompos} 
(for $F=E_\lambda$ and $\overline\Q_\ell=\overline E_\lambda$), if the number field $E$ is large enough such 
the $F_i$-central simple algebra  $S_i$ is split, i.e., isomorphic to $M_{d_i}(F_i)$ for all $i$, then  using a map as in \eqref{eq:RS} one sees that $\rho_\lambda$ can be defined over $E_\lambda$.
If we can find a large enough $E$ for this to hold for all $\lambda\in\Pi_E$, then Theorem \ref{coeffd}(i) follows.
By applying \cite[$\mathsection7$]{LP92} to the semisimple $E$-compatible system $\{\rho_\lambda\}$ 
(under the connectedness assumption), there exist a finite subset\footnote{The size of such $B$ can be two.}
$B\subset \Sigma_K\backslash S$ such that for each $\lambda\in\Pi_E$,
 $\rho_\lambda$ is unramified at some element of $v_\lambda\in B$ and 
the Frobenius conjugacy class 
$$\rho_\lambda(\mathrm{Frob}_{v_\lambda})\subset\bG_\lambda(\overline E_\lambda)\subset\GL_n(\overline E_\lambda)$$
is \emph{$\Gamma$-regular} \cite[Definition 4.5]{LP92}. Suppose $E$ is large enough 
to contain the roots of $P_v(T):=\det(\rho_\lambda(\mathrm{Frob}_v)-T\cdot\text{Id})\in E[T]$ for all $v\in B$.
Fix an element $t_\lambda\in \rho_\lambda(\mathrm{Frob}_{v_\lambda})$. Then $t_\lambda$ lies in a unique maximal 
torus $\bT_\lambda$ of $\bG_\lambda$ \cite[Proposition 4.7]{LP92}. 
For any $E_\lambda$-embedding $\sigma:F_i\to \overline E_\lambda$, we obtain a map 
$\iota_\sigma: S_i\to S_i\otimes_{F_i} \overline E_\lambda\simeq M_{d_i}(\overline E_\lambda)$ 
 such that the representation
$$\Gal_K\stackrel{\pi_{E_\lambda}}{\rightarrow} S_{E_\lambda}\to S_i\to M_{d_i}(\overline E_\lambda)$$
is an irreducible factor of $\rho_\lambda$ (subsection \ref{ssec:algmon}(b)),
which corresponds to an irreducible factor $W_\lambda$ of the faithful
representation $\bG_\lambda\to\GL_{n,\overline E_\lambda}$.
Since $\bG_\lambda$ is connected reductive, some weight (e.g., the ``highest weight'') of $\bT_\lambda\to \GL_{W_\lambda}$
has multiplicity one. Since the eigenspace decomposition of $t_\lambda$
on the ambient space $\overline E_\lambda^n$
coincides with that of $\bT_\lambda$ \cite[(4.4)]{LP92}, the characteristic polynomial 
of the image of $\mathrm{Frob}_{v_\lambda}$
in $S_i$ splits completely over $F_i$ by our choice of $E$ and has a root of multiplicity one.
The latter condition implies that $S_i=M_{d_i}(F_i)$ by \cite[Theorem 4(a)]{Yu13}.
Since this holds for all $\lambda$ and $F_i/E_\lambda$, we obtain Theorem \ref{coeffd}(i).
Now, we may assume that $\rho_\lambda(\Gal_K)\subset\GL_n(E_\lambda)$ for all $\lambda$.
Hence, we obtain 
$$t_\lambda\in \bT_\lambda(E_\lambda)\subset\bG_\lambda(E_\lambda)\subset\GL_{n}(E_\lambda),$$
where $\bT_\lambda\subset\bG_\lambda$ are subgroups of $\GL_{n,E_\lambda}$.
 Now by the proof of \cite[Proposition 4.7]{LP92}, the torus $\bT_\lambda$ is a subgroup 
of the center of the centralizer of $t_\lambda$ inside $\GL_{n,E_\lambda}$. Because $t_\lambda$ can 
be diagonalized in $\GL_n(E_\lambda)$, by our choice of $E$, its centralizer
in $\GL_{n,E_\lambda}$ is a product $\prod_i \GL_{n_i,E_\lambda}$ with $\sum_in_i=n$, and the center of this centralizer is an $E_\lambda$-split torus. 
%Moreover by the proof of \cite[Proposition 4.7]{LP92}, the torus $\bT_\lambda$ is a subgroup of the center of the centralizer of $t_\lambda$, and h
Now as a subtorus, %of an $E_\lambda$-split torus 
the torus $\bT_\lambda$ is also $E_\lambda$-split, cf.~\cite[\S~12c and e]{Mi17}, and this gives Theorem \ref{coeffd}(ii).
From the splitness of $\bG_\lambda$ Theorem \ref{coeffd}(iii) is immediate,  since for a split reductive group over a field of characteristic zero,
any irreducible representation is absolutely irreducible (by the highest weight theory for such representations); \hbox{cf.~\cite[\S 22.a]{Mi17}}.

Next, we deal with the general (disconnected) case. From above, we assume $E$ is large enough such that 
$\rho_\lambda(\Gal_{K^{\conn}})\subset\GL_n(E_\lambda)$ and every irreducible factor of $\rho_\lambda|_{\Gal_{K^{\conn}}}$
is absolutely irreducible for all $\lambda$. By Frobenius reciprocity,
\begin{equation}\label{ind}
\{\Phi_\lambda:=\mathrm{Ind}_{K^{\conn}}^K (\rho_\lambda|_{\Gal_{K^{\conn}}}):\Gal_K\to\GL_{n[K^{\conn}:K]}(E_\lambda)\}_{\Pi_E}
\end{equation}
is a semisimple strongly $E$-compatible system that, after base change to $\overline E_\lambda$, contains $\{\rho_\lambda\}_{\Pi_E}$ 
as a subsystem. To finish, it suffices to show that every irreducible factor $\phi_\lambda$ 
of every $\Phi_\lambda$ in \eqref{ind} is absolutely irreducible
if $E$ is large enough. By our assumption and $K^{\conn}/K$ being Galois, every irreducible factor of 
$\Phi_\lambda|_{\Gal_{K^{\conn}}}$ is absolutely irreducible.
We formulate the following purely representation theoretic results based on \cite{Cl37};  see also \cite[\S~49--51]{CR62}.

\begin{prop}\label{Clifford}
Let $F$ be a field, $\phi:G\to\GL(V)$ be a finite dimensional irreducible $F$-representation of a group $G$, 
and $H$ a normal subgroup of $G$ of finite index.
The following assertions hold.
\begin{enumerate}[(i)]
\item The restriction $\phi|_H$ is semisimple and its irreducible decomposition is of the form $V=\bigoplus_{i=1}^m W_i^{\oplus k}$
for some $m,k\in\N$, where $W_i$ and $W_j$ are non-isomorphic if $i\neq j$. Moreover, the dimensions of $W_i$ for all $i$ are equal to $d\in\N$.
\item Let $G_1\leq G$ be the stabilizer of the subspace $V_1:=W_1^{\oplus k}\leq V$. Then the index $[G:G_1]=m$ and 
the natural representation $\phi_1:G_1\to \GL(V_1)$ is irreducible.
\item Suppose the (irreducible) $H$-representation $W_1$ is absolutely irreducible.
Then $\phi_1$ is the tensor product $\alpha_1\otimes\beta_1$ of 
two irreducible projective $F$-representations\footnote{A projective representation
$G\to M_k(F)$ is equivalent to a group homomorphism $G\to\PGL_k(F)$.} 
$\alpha_1:G_1\to M_k(F)$ and $\beta_1:G_1\to M_d(F)$ of $G_1$ such that $\alpha_1$ is a projective representation 
of the quotient group $G_1/H$
and $\beta_1|_H$ is the true representation $W_1$.
\item Suppose the $H$-representation $W_i$ is absolutely irreducible for all $1\leq i\leq m$.
If any subgroup $G'$ of $G$ containing $H$ and any irreducible projective $F$-representation of $G'/H$
is absolutely irreducible, then the irreducible representation $\phi$ of $G$ is absolutely irreducible. 
\end{enumerate}
\end{prop}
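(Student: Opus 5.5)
The plan is to follow classical Clifford theory, proving (i)--(iv) in order, with (iii) as the technical core.

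For (i), let $W_1\subset V$ be an irreducible $H$-subrepresentation; one exists because $V$ is finite dimensional. For $g\in G$ the translate $\phi(g)W_1$ is again $H$-stable and irreducible, since $H$ is normal, and it is isomorphic to the conjugate representation $W_1^g$. The sum $\sum_{g\in G}\phi(g)W_1$ is $G$-stable, hence equals $V$ by irreducibility. So $V$ is a sum of irreducible $H$-modules and is therefore semisimple. All the $W_1^g$ have the same dimension $d$. Grouping by isotypic components, $G$ permutes the isotypic components transitively. Because the action is transitive, all components have the same multiplicity $k$, which gives $V=\bigoplus_{i=1}^m W_i^{\oplus k}$. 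Finiteness of $[G:H]$ is not even needed here, but it guarantees that only finitely many conjugates occur.

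For (ii), $G$ acts transitively on the $m$ isotypic components $V_i=W_i^{\oplus k}$, so the stabilizer $G_1$ of $V_1$ has index $m$. We then have $V\simeq \Ind_{G_1}^G V_1$, since $V=\bigoplus_{g\in G/G_1}\phi(g)V_1$. If $U\subsetneq V_1$ were a nonzero $G_1$-stable subspace, then $\sum_g \phi(g)U$ would be a proper nonzero $G$-stable subspace, a contradiction. Hence $\phi_1$ is irreducible.

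For (iii), since $W_1$ is absolutely irreducible, $\End_H(W_1)=F$. For each $g\in G_1$ we have $W_1^g\simeq W_1$, so we choose $B(g)\in\GL(W_1)$ with $B(g)\,w_1(g^{-1}hg)\,B(g)^{-1}=w_1(h)$. This $B(g)$ is unique up to $F^\times$ by Schur's lemma, which gives a projective representation $\beta_1$ of $G_1$ extending $W_1$. We may normalize $B(h)=w_1(h)$ for $h\in H$. Next, write $V_1=W_1\otimes_F \Hom_H(W_1,V_1)$, where the second factor has dimension $k$. Setting $A(g):=(B(g)^{-1}\otimes 1)\circ\phi_1(g)$ restricted appropriately, this operator commutes with $H$, so it acts on the multiplicity space $\Hom_H(W_1,V_1)$. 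This yields a projective representation $\alpha_1$ with $\phi_1(g)=\alpha_1(g)\otimes\beta_1(g)$, and the cocycles cancel. For $h\in H$ we get $\alpha_1(h)$ scalar, so $\alpha_1$ factors through $G_1/H$ projectively. Irreducibility of $\alpha_1$ follows because a $G_1$-stable subspace $U$ of the multiplicity space gives the $G_1$-stable subspace $W_1\otimes U$ of $V_1$, which must be trivial or all of $V_1$ by (ii). Likewise $\beta_1$ is irreducible, since it is already irreducible on $H$. I expect the main obstacle to be the bookkeeping of scalars and cocycles needed to make this decomposition rigorous.

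For (iv), apply (iii) with $G'=G_1$. By hypothesis $\alpha_1$ is absolutely irreducible, and $\beta_1$ is absolutely irreducible because it restricts to $W_1$ on $H$. Over $\overline F$, a tensor product of an absolutely irreducible projective representation of $G_1/H$ with $\beta_1$ remains irreducible. The reason is that $\End_{G_1}(V_1\otimes\overline F)$ is contained in $\End_H = 1\otimes \End(\text{mult})$ and then commutes with $\alpha_1(G_1)$, so it is scalar. Thus $\phi_1$ is absolutely irreducible. Finally, $\phi\otimes\overline F=\Ind_{G_1}^G(\phi_1\otimes\overline F)$. The argument of (ii), applied over $\overline F$, uses that the $W_i\otimes\overline F$ are pairwise non-isomorphic, which holds because their characters differ by absolute irreducibility. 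That argument shows the induced representation is irreducible: a $G$-submodule $U$ has $U\cap V_{1,\overline F}$ $G_1$-stable and, being a sum of isotypic pieces, nonzero if $U$ is. Hence $\phi$ is absolutely irreducible.
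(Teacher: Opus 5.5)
Your proposal is correct and follows essentially the paper's route. The paper simply cites Clifford's theorems for (i)--(iii), and you reproduce their classical proofs: translates of an irreducible $H$-summand, transitive permutation of the isotypic components, and the factorization $\phi_1(g)=B(g)\otimes\alpha_1(g)$ on $V_1=W_1\otimes_F\Hom_H(W_1,V_1)$ using $\End_H(W_1)=F$. For (iv), both arguments rest on the same commutant computation: an operator commuting with $H$ lies in $1\otimes\End(U)$, and it must then commute with $\alpha_1(G_1)$.

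The organization of (iv) differs slightly. The paper uses the criterion ``irreducible and $\End_G(V)=F$ implies absolutely irreducible''. It writes the computation as $\End_G(V)=(\End(W_1)^H\otimes\End(U))^{G/H}=F$, which as displayed is literal only when $m=1$. In general one also needs that restriction $\End_G(V)\to\End_{G_1}(V_1)$ is injective, which holds because $V=\sum_g\phi(g)V_1$. You instead do the computation on $(G_1,V_1)$ and then pass to $G$ by the Clifford argument over $\overline F$, which makes the case $m>1$ explicit.

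Two small points should be tightened.

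\begin{enumerate}
\item Knowing $\End_{G_1}(V_1\otimes\overline F)=\overline F$ does not by itself give irreducibility. For arbitrary $F$ and $G_1/H$ there is no semisimplicity to appeal to, and a reducible module can have scalar commutant. Close the step in one of two ways.
\begin{itemize}
\item Invoke the same density criterion the paper uses, together with the irreducibility of $\phi_1$ over $F$ from (ii).
\item Argue directly: every $G_1$-submodule of $V_1\otimes\overline F$ has the form $(W_1\otimes\overline F)\otimes U''$, and $G_1$-stability forces $U''$ to be $\alpha_1(G_1)$-stable, so $U''$ is $0$ or everything.
\end{itemize}
\item For the pairwise non-isomorphism of the $W_i\otimes\overline F$, use $\Hom_H(W_i,W_j)\otimes_F\overline F=\Hom_H(W_i\otimes\overline F,W_j\otimes\overline F)$. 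This is cleaner than appealing to characters, which in positive characteristic needs its own density argument.
\end{enumerate}
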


\begin{proof}
Assertions (i),(ii) follow from \cite[Theorem 2]{Cl37} and its proof.
Assertion (iii) is just \cite[Theorem 3]{Cl37}, where one only requires $W_1$ to be absolutely irreducible so that 
the commutant $\End_H(W_1)=F$.

Regarding (iv), let us first recall some basic results on projective representations, e.g.~\cite[\S 24--25]{Hu67}. 
To every projective representation of $G$ on a vector space $U$, one can in the usual way define the algebra of $G$-equivariant
endomorphisms; the $2$-cocycle of $G$ valued in $F^\times$ that is needed to define the action of $G$ on $U$, 
does not interfere, because the center acts trivially under conjugation. Moreover, as is the case for ordinary representations, 
a projective representation is absolutely irreducible over $F$ if and only if it is irreducible and $\End_G(U)=F$. %To every projective representation of $G$, one can associate a $1$-cocycle in $Z^2(G,F^\times)$, and the cocycles of isomorphic projective representations differ by a $1$-coboundary.  representations

We can now argue as follows, where below $U$ denotes the space underlying the representation $\alpha_1$ from (iii):\footnote{Found in math overflow {\tt https://mathoverflow.net/questions/208731/irreducibility-of-the-tensor-product-of-two-finite-dimensional-irreducib} 
%{\color{violet} should we quote this; cf. comment of user 74720}
}
\begin{eqnarray*}
\End_G(V)&=&\End(W_1\otimes U)^G \ = \ (\End(W_1)\otimes\End(U))^G \\ 
&\stackrel{\mathrm{(1)}}= & \ (\End(W_1)^H\otimes\End(U))^{G/H} \ = (F\otimes\End(U))^{G/H} \  \stackrel{\mathrm{(2)}}= \ F,
\end{eqnarray*}
where for (1) we use our hypothesis that $W_1$ is absolutely irreducible and that $H$ acts trivially on $U$, and for (2) that the projective representation $\beta_1$ is an absolutely irreducible projective $G/H$-representation.
%Let $G_i$ be the stabilizer of $V_i:=W_i^{\oplus k}\leq V$ for all $i$.
%By (iii) and the assumption of (iv), one obtains 
%$$\End_G(V)\subset \prod_{i=1}^m \End_{G_i}(V_i)=\prod_{i=1}^m F= F^{\oplus m}.$$
%It follows that $\End_G(V)=F$ since it is an $F$-division algebra.
Hence $\phi$ is absolutely irreducible. 
\end{proof}

Let $\phi_\lambda$ be an irreducible  subrepresentation of $\Phi_\lambda$.
We would like to use Proposition \ref{Clifford} to show that $\phi_\lambda$ is absolutely irreducible.
Take $\phi=\phi_\lambda$, $G=\Gal_K$, $H=\Gal_{K^{\conn}}$. If $E$ is large enough, then 
the assumptions of Proposition \ref{Clifford}(iv) are satisfied.
Indeed, absolute irreducibility of $\phi_\lambda|_{\Gal_{K^{\conn}}}$ is already assumed. Moreover 
the finite group $G/H$ has finitely many subgroups $Q$ and each $Q$ has finitely many 
irreducible projective $\C$-representations (up to isomorphism)
which can be defined over a large enough number field $E$ (and thus over $E_\lambda$);  see \cite[Theorem 11.17]{Is76}.
This completes the proof.

\section{Abelian $\ell$-adic representations}\label{s4}
We recall some results frome \cite{Se98} and \cite{BH25} that are fundamental to the present article.
Let $K$ and $E$ be number fields. 
If $\phi_\ell$ is an abelian $\ell$-adic representation of $K$, then it factors through the Galois group $\Gal_K^{\ab}$ of 
the maximal abelian extension of $K$. Thus, we can identify $\phi_\ell$ with $\phi_\ell:\Gal_K^{\ab}\to\GL_n(\overline \Q_\ell)$.

\subsection{Local algebraicity}\label{s4.1}
Let $\bT_K:=\mathrm{Res}_{K/\Q}\mathbb{G}_{m,K}$ be the $[K:\Q]$-dimensional $\Q$-torus 
given by Weil restriction of scalars. %For any commutative $\Q$-algebra $A$, the group of $A$-points satisfies $$\bT_K(A)=(K\otimes_\Q A)^\times.$$
Let $\A_K^\times$ be the group of id\`eles of $K$ and $i_\ell$ be the natural composition
\begin{equation*}\label{localArtin}
i_\ell:\bT_K(\Q_\ell)=(K\otimes_\Q\Q_\ell)^\times=\prod_{v|\ell}K_v^\times\to \A_K^\times/K^\times \stackrel{\mathrm{Art}_K}{\longrightarrow}\Gal_K^{\text{ab}},
\end{equation*}
where $\mathrm{Art}_K$ is the Artin reciprocity map.
An abelian semisimple $\ell$-adic representation 
\begin{equation}\label{phi}
\phi_\ell:\Gal_K^{\ab}\to\GL_n(\overline\Q_\ell)
\end{equation}
of $K$ is called \emph{locally algebraic} \cite[Chap. III]{Se98}
if there exists an $\overline\Q_\ell$-algebraic morphism
$$r_\ell:\bT_K\times_\Q\overline\Q_\ell \to \GL_{n,\overline\Q_\ell}$$
such that for all $x$ in some neighborhood of $1$ in the $\ell$-adic Lie group 
$\bT_K(\Q_\ell)$:
\begin{equation*}\label{eq:LocAlgMap}
\phi_\ell\circ i_\ell(x)=r_\ell(x^{-1}).
\end{equation*}

The local algebraicity of $\phi_\ell$ is equivalent to, respectively, the local algebraicity of $\phi_\ell|_{\Gal_L}$ (for any $L/K$ finite)
and the local representations of $\phi_\ell$ at all $v\in S_\ell$ being Hodge-Tate (or de Rham) \cite[Chap. III]{Se98}.
The following equivalence is crucial.

\begin{thm}(Serre-Waldschmidt, see \cite{He82}) \label{SW}
An abelian semisimple $\ell$-adic representation $\phi_\ell$ is locally algebraic if and only if it is $E$-rational for some 
number field $E\subset\overline\Q_\ell$.
\end{thm}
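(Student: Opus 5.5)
The plan is to prove the two implications separately, working with $\phi_\ell$ through the Artin map. We may replace $\phi_\ell$ by its conjugate over $\overline\Q_\ell$, so that (by semisimplicity and commutativity) $\phi_\ell$ is diagonal. It is then a direct sum of continuous characters $\chi_j:\Gal_K^{\ab}\to\overline\Q_\ell^\times$, and both properties in question can be checked character by character. Local algebraicity of $\phi_\ell$ amounts to local algebraicity of each $\chi_j$, since $r_\ell$ may be taken diagonal. $E$-rationality of $\phi_\ell$ for some number field $E$ amounts to the existence of a single number field containing all Frobenius eigenvalues $\chi_j(\Frob_v)$ for $v\notin S$. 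Passing from the individual characters back to $\phi_\ell$ is harmless: the characteristic polynomials have coefficients in $E$ as soon as each eigenvalue lies in $E$. Conversely, if these polynomials have coefficients in $E$ and bounded degree $n$, then each eigenvalue lies in a number field of degree at most $n!$ over $E$. A finite-exponent/degree argument will be needed to collect them into one fixed number field; this is where a little care is required.

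For ``locally algebraic $\Rightarrow$ $E$-rational'', we follow Serre's construction \cite[Chap. II--III]{Se98}. Local algebraicity of $\chi$ gives an algebraic character $r:\bT_K\times\overline\Q_\ell\to\mathbb{G}_m$ that agrees with $\chi\circ i_\ell(x^{-1})$ near $1$. This character is defined over a number field, namely the Galois closure of $K$. Because $\chi$ is continuous and the global units are dense enough, $r$ kills a finite-index subgroup of $\mathcal O_K^\times$. Then $r$ factors through Serre's group $\bS_\mathfrak m$ for a modulus $\mathfrak m$, and $\chi$ is the $\ell$-adic realization of a $\overline\Q$-valued Hecke character of type $A_0$. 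The values of that Hecke character on ideals prime to $\mathfrak m$ form a finitely generated group of algebraic numbers, so they generate a number field. Hence $\chi(\Frob_v)$ lies in a fixed number field for all $v\notin S$.

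The converse, ``$E$-rational $\Rightarrow$ locally algebraic'', is the deep direction, and we expect it to be the main obstacle; it is where Waldschmidt's transcendence theorem enters. Fix $v\mid\ell$ and consider the $\ell$-adic analytic homomorphism $\log\chi\circ i_\ell$ on a small neighbourhood of $1$ in $K_v^\times$. Its differential is a $\Q_\ell$-linear form on $K_v$, which extends to $\bigoplus_{\sigma}\overline\Q_\ell$. The goal is to show that its coefficients in the basis of embeddings $\sigma:K_v\to\overline\Q_\ell$ are integers; that integrality is exactly local algebraicity. To get there, choose global units and $S$-units whose Frobenius values $\chi(\Frob)$ are algebraic by $E$-rationality. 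Evaluating the $\ell$-adic exponential and logarithm at these algebraic points produces relations of the form $\exp(\text{algebraic}\cdot\log\text{algebraic})=\text{algebraic}$. Waldschmidt's $\ell$-adic six exponentials (Schneider--Lang type) theorem then forces the coefficients to be rational, and a further argument upgrades rational to integral. We would treat this step essentially by citing \cite{He82} and \cite[Chap. III]{Se98}, checking only that the hypotheses hold, namely that the rank of the $S$-unit group is large enough, which is automatic over number fields.
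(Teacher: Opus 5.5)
The paper gives no argument of its own for this theorem. It quotes it: Serre's theory of the groups $\bS_{\mathfrak m}$ handles ``locally algebraic $\Rightarrow$ $E$-rational'', and Waldschmidt's transcendence theorem, as carried out by Henniart, handles the converse. Your outline follows that same route. However, your setup hides the one place where $E$-rationality of $\phi_\ell$ is really needed, as opposed to mere algebraicity of Frobenius eigenvalues, and the step you propose there would not work.

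\textbf{The per-character reduction and the integrality step.} Reducing $E$-rationality to individual characters, via a ``finite-exponent/degree argument'', fails at that stage. Take $K=\Q$, $\ell$ odd, and $\chi=\langle\chi_{\mathrm{cyc}}\rangle^{1/2}$, so that $\chi(\Frob_p)=\langle p\rangle^{1/2}$ is a square root of $p\,\omega(p)^{-1}$.
\begin{itemize}
\item All these values are algebraic of degree at most $2$ over $\Q(\mu_{\ell-1})$, yet together they generate an infinite extension.
\item $\chi$ is not locally algebraic: its exponent at $\ell$ is $\tfrac12$.
\end{itemize}
So degree bounds alone never produce a single number field. The equivalence you state is true, but only as a consequence of the theorem.

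The same example shows that a transcendence argument using only that each $\chi_j(\Frob_v)$ is algebraic can at best give $a_{j,\sigma}\in\Q$. Your ``further argument'' upgrading $\Q$ to $\Z$ must therefore use the $\Gal(\overline\Q/E)$-stability of the whole multiset $\{\chi_j(\Frob_v)\}_j$. One way to do this, once all exponents lie in $\frac1N\Z$:
\begin{itemize}
\item Take $\pi\in K^\times$ totally positive, $\equiv1$ modulo the conductor away from $\ell$, very close to $1$ at $\ell$, and generating a prime $\mathfrak p$.
\item Set $\alpha_j:=\chi_j(i_\ell(\pi))=\chi_j(\Frob_{\mathfrak p})^{\pm1}$. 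Then $\alpha_1^N=\prod_\sigma\sigma(\pi)^{-Na_{1,\sigma}}$ lies in the Galois closure $\tilde K$ of $K$.
\item Any conjugate of $\alpha_1$ over $F_0:=\tilde KE(\mu_N)$ equals $\zeta\alpha_1$ with $\zeta\in\mu_N$. It is also one of the $\alpha_j$, all of which are $\ell$-adically close to $1$. Hence $\zeta=1$ and $\alpha_1\in F_0$.
\item Choose $\mathfrak p$ of degree one whose residue characteristic splits completely in $F_0$. Comparing valuations of $\alpha_1^N$ at the primes above it gives $N\mid Na_{1,\sigma}$ for every $\sigma$.
\end{itemize}

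\textbf{Two smaller inaccuracies in the easy direction.}
\begin{itemize}
\item The values of a type $A_0$ Hecke character on ideals prime to $\mathfrak m$ do not form a finitely generated group; for $\chi_{\mathrm{cyc}}$ they are all the primes $p$. They do lie in a number field, for a different reason: on principal ideals $(\alpha)$ with $\alpha\equiv1\bmod\mathfrak m$ the values are $\prod_\sigma\sigma(\alpha)^{n_\sigma}\in\tilde K$, and the ray class group is finite.
\item $r$ kills a finite-index subgroup of $\mathcal O_K^\times$ by reciprocity, not by any density property. A totally positive unit $u\equiv1\bmod\mathfrak m$ close to $1$ at $\ell$ is a principal id\`ele whose components away from $\ell$ act trivially, so $1=\chi(i_\ell(u))=r(u^{-1})$.
\end{itemize}
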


\subsection{Serre groups $\bS_\mathfrak{m}$ and representations}\label{s4.2}
A \emph{modulus} $\mathfrak m$ of $K$ is a function $\mathfrak m:\Sigma_K \to \Z_{\geq 0}$ such that 
the \emph{support} $\mathrm{Supp}(\mathfrak m):=\mathfrak m^{-1}(\Z_{>1})$ is finite.
By \cite[Chap. II]{Se98}, attached to a modulus $\mathfrak m$ is a pair $(\bS_{\mathfrak m}, \{\epsilon_\ell\})$,
where $\bS_\mathfrak{m}$  is a
$\Q$-diagonalizable group  (called the Serre group with modulus $\mathfrak m$) and
\begin{equation}\label{epsilon}
\{\epsilon_\ell:\Gal_K\to \bS_\mathfrak{m}(\Q_\ell):~\ell\in\Sigma_\Q\}
\end{equation}
is a family of abelian $\ell$-adic representations of $K$
such that $\epsilon_\ell(\Gal_K)$ is Zariski dense in $\bS_\mathfrak{m}\times\Q_\ell$ for all $\ell$,
and
if $\phi:\bS_\mathfrak{m}\times E\to\GL_{n,E}$
is an $E$-morphism, then 
\begin{equation}\label{phics}
\{\phi_\lambda:\Gal_K\stackrel{\epsilon_\ell}{\rightarrow} \bS_\mathfrak{m}(\Q_\ell) \hookrightarrow\bS_\mathfrak{m}(E_\lambda)
\stackrel{\phi}{\rightarrow}  \GL_n(E_\lambda):~  \lambda\in\Sigma_E\}
\end{equation}
is an abelian semisimple  strongly $E$-rational compatible system of $K$ with the finite $S=\mathrm{Supp}(\mathfrak m)$.
The pairs $(\bS_{\mathfrak m}, \{\epsilon_\ell\})$ for all moduli $\mathfrak m$ of $K$
form a projective system (with respect to the arrows $\mathfrak m\leftarrow \mathfrak m'$ whenever $\mathfrak m\leq \mathfrak m'$) 
\cite[Chap II, $\mathsection2.2$, Remark]{Se98}.

\begin{thm}\label{Serretolocalg}\cite[Chap. III, $\mathsection 2.3$, Theorem 1]{Se98}
The semisimple abelian $\lambda$-adic representation $\phi_\lambda=\phi\circ\epsilon_\ell$ 
(attached to $\phi:\bS_\mathfrak{m}\times E\to\GL_{n,E}$) is locally algebraic for all $\lambda$.
\end{thm}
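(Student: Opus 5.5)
The plan is to reduce, via the universal property of the Serre groups, to the locally algebraic character of $\bS_\mathfrak{m}$ itself, and then to pass to the representation $\phi$ by composition. Since $\phi_\lambda$ is semisimple and abelian, after extending $E_\lambda$ to $\overline E_\lambda$ it decomposes as a direct sum of characters. Local algebraicity is stable under direct sums and under extension of the coefficient field. It is therefore enough to treat a single character $\chi\circ\epsilon_\ell$ with $\chi\in X^*(\bS_\mathfrak{m})$ defined over $\overline\Q$, embedded in $\overline\Q_\ell$.

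I will recall Serre's construction from \cite[Chap. II]{Se98}. There $\bS_\mathfrak{m}$ is an extension of the finite group $C_\mathfrak{m}$ (the ray class group) by the torus $\bT_\mathfrak{m}=\bT_K/\overline{E_\mathfrak{m}}$, where $E_\mathfrak{m}$ is the group of units congruent to $1$ modulo $\mathfrak{m}$. This comes with an algebraic morphism $\bT_K\to\bS_\mathfrak{m}$ and with $\epsilon_\ell$ defined on ideles. For $x\in\bT_K(\Q_\ell)=\prod_{v|\ell}K_v^\times$ close to $1$, the idele $x$ is a unit at all places of $\mathrm{Supp}(\mathfrak m)$ and is congruent to $1$ there. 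Moreover $\mathrm{Art}_K(x)=i_\ell(x)$, and Serre's defining formula gives $\epsilon_\ell(i_\ell(x))=\pi(x_\ell)^{-1}$, where $\pi:\bT_K\to\bS_\mathfrak{m}$ is the canonical map. This formula is the key identity, and I will verify it directly from the definition of $\epsilon_\ell$: one multiplies by the global element to kill components, and uses the fact that on $x_\ell$ the map is $\pi$ with a sign inversion.

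Granting the identity, I take $r_\lambda:=\phi\circ\pi:\bT_K\times\overline\Q_\ell\to\GL_{n}$. This is an algebraic morphism, and for $x$ near $1$ we get $\phi_\lambda(i_\ell(x))=\phi(\pi(x)^{-1})=r_\lambda(x^{-1})$. That is exactly the defining condition of local algebraicity in subsection \ref{s4.1}. The direct-sum reduction is then unnecessary, since we apply $\phi$ directly.

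The main obstacle is the precise verification of $\epsilon_\ell\circ i_\ell=\pi^{-1}$ on a neighbourhood of $1$. This means unwinding Serre's definition, in which $\epsilon_\ell$ is obtained from the homomorphism $\A_K^\times\to\bS_\mathfrak{m}(\Q_\ell)$, $a\mapsto \pi(a_\ell)^{-1}\cdot\varepsilon(a)$, with $\varepsilon(a)$ trivial on ideles near $1$ at $\ell$ and at $\mathrm{Supp}(\mathfrak m)$. One must also check that the neighbourhood is chosen small enough to avoid the finite part $C_\mathfrak{m}$. As a fallback, I would invoke Theorem \ref{SW}: $\phi_\lambda$ is $E$-rational (it belongs to the strongly $E$-rational system \eqref{phics}) and abelian semisimple, hence locally algebraic.
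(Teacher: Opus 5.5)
Your argument is correct and is essentially Serre's own proof of \cite[Chap.~III, \S2.3, Theorem~1]{Se98}, which the paper simply cites without proof. The defining formula for $\epsilon_\ell$ on ideles gives $\epsilon_\ell(i_\ell(x))=\pi(x)^{-1}$ for $x\in\bT_K(\Q_\ell)$ close to $1$, since such $x$ are units at the places above $\ell$ and congruent to $1$ at the places of $\mathrm{Supp}(\mathfrak m)$, so the ray-class contribution vanishes. Then $r=\phi\circ\pi$ witnesses local algebraicity. The reduction to characters is superfluous, as you note yourself. The fallback via Theorem~\ref{SW} is not circular, but it rests on Waldschmidt's transcendence theorem, a far heavier input than this easy direct computation needs.
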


We present some extension results.

\begin{prop}\label{extend}
Let $\phi_\ell:\Gal_K\to\GL_n(\overline \Q_\ell)$ be an abelian semisimple $\ell$-adic representation.
If $\phi_\ell$ is $E$-rational for some number field $E\subset\overline\Q_\ell$, 
then there exists an $E$-morphism $\phi:\bS_\mathfrak{m}\times E\to\GL_{n,E}$
of some Serre group such that $\phi_\ell$ is equivalent to a member of the  strongly $E$-rational compatible system \eqref{phics}.
\end{prop}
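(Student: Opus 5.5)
The plan is to combine Serre--Waldschmidt (Theorem~\ref{SW}) with Serre's classification of locally algebraic abelian representations \cite[Chap.~III]{Se98}, and then use semisimplicity to match the resulting compatible system with $\phi_\ell$. Fix the place $\lambda$ of $E$ defined by $E\subset\overline\Q_\ell$, so that $\phi_\ell=\phi_\lambda:\Gal_K\to\GL_n(\overline E_\lambda)$.

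\textbf{Step 1.} Reduce to the case where $\phi_\lambda$ is defined over $E_\lambda$ and diagonal. Since $\phi_\ell$ is abelian and semisimple, it is conjugate over $\overline\Q_\ell$ to a direct sum of $n$ characters $\chi_1,\dots,\chi_n:\Gal_K^{\ab}\to\overline\Q_\ell^\times$. Each $\chi_j$ is continuous, so it takes values in a finite extension of $\Q_\ell$. By $E$-rationality, the Frobenius eigenvalues $\chi_j(\Frob_v)$ are roots of the polynomials $P_v(T)\in E[T]$.

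\textbf{Step 2.} Show each character is rational and locally algebraic. By Theorem~\ref{SW}, $\phi_\ell$ is locally algebraic, hence so is each direct summand $\chi_j$ (local algebraicity passes to summands via the Hodge--Tate criterion). Serre's theorem \cite[Chap.~III, \S2.3, Theorem~2]{Se98} then applies: a locally algebraic abelian semisimple representation factors through $\epsilon_\ell$ for some Serre group $\bS_\mathfrak m$, with $\mathfrak m$ large enough to dominate the conductor of the ramification outside $\ell$. This gives a $\overline\Q_\ell$-morphism $\psi:\bS_\mathfrak m\times\overline\Q_\ell\to\GL_{n,\overline\Q_\ell}$ with $\phi_\ell\simeq\psi\circ\epsilon_\ell$.

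\textbf{Step 3.} Descend $\psi$ to $E$; this is the main obstacle. Since $\bS_\mathfrak m$ is diagonalizable, $\psi$ is a sum of characters $\theta_j\in X^*(\bS_\mathfrak m)$, and these are defined over a finite extension of $\Q$ inside $\overline\Q$. The multiset $\{\theta_j\}$ must be stable under $\Gal(\overline\Q/E)$. To show this, argue as follows. The values $\theta_j(\epsilon_\ell(\Frob_v))$ are the roots of $P_v(T)\in E[T]$. In $\bS_\mathfrak m$, the element $\epsilon_\ell(\Frob_v)$ is a $\Q$-rational point $f_v$ independent of $\ell$, and these points are Zariski dense in $\bS_\mathfrak m$ by Chebotarev. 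Consequently the representation $\sum_j\theta_j$ and each of its $\Gal(\overline\Q/E)$-conjugates have the same character on a Zariski-dense set. Characters of a diagonalizable group are linearly independent, so the multiset is $\Gal(\overline\Q/E)$-stable. Hence $\psi$ is defined over $E$: a $\Gal(\overline\Q/E)$-stable multiset of characters gives an $E$-rational representation of the $\Q$-diagonalizable group, by Galois descent of the corresponding eigenspace decomposition. Write $\phi:\bS_\mathfrak m\times E\to\GL_{n,E}$ for this descended representation.

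\textbf{Step 4.} Compare with $\phi_\ell$. Form the strongly $E$-rational system \eqref{phics} attached to $\phi$. Its $\lambda$-member $\phi\circ\epsilon_\ell$ has the same characteristic polynomials of Frobenius as $\phi_\ell$ on a density-one set of places. Both representations are semisimple, so by Chebotarev and Brauer--Nesbitt they are isomorphic over $\overline E_\lambda$. Thus $\phi_\ell$ is equivalent to a member of \eqref{phics}, as claimed.
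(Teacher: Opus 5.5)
Your proposal is correct and follows the paper's route: Serre--Waldschmidt gives local algebraicity, Serre's Chap.~III, \S2.3, Theorem~2 produces $\psi$ over $\overline\Q_\ell$ with $\phi_\ell\simeq\psi\circ\epsilon_\ell$, and $\psi$ is then descended to $E$. The only difference is that the paper simply cites Serre (Chap.~II, \S2.4, Proposition~2) for the descent, while your Step~3 reproves it via Zariski density of the $\Q$-rational Frobenius points and linear independence of characters; Steps~1 and~4 are unnecessary, since $\phi\otimes\overline E_\lambda\simeq\psi$ already gives $\phi\circ\epsilon_\ell\simeq\phi_\ell$.
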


\begin{proof}
Since $\phi_\ell$ is $E$-rational, it is locally algebraic by Theorem \ref{SW}.
Then there exist a modulus $\mathfrak m$ and a 
$\overline\Q_\ell$-morphism $\phi:\bS_\mathfrak{m}\times \overline\Q_\ell\to\GL_{n,\overline\Q_\ell}$ 
such that 
\begin{equation}\label{compose}
\phi_\ell\simeq \phi\circ \epsilon_\ell.
\end{equation}
by \cite[Chap. III, $\mathsection2.3$, Theorem 2]{Se98}.
The $\overline\Q_\ell$-morphism $\phi$ is defined over $E$ by the $E$-rationality and \cite[Chap. II, $\mathsection2.4$, Proposition 2]{Se98}.
Hence, we obtain the $E$-rational compatible system \eqref{phics} and
we are done by \eqref{compose}.
\end{proof}

The following results are  immediate consequences of Proposition~\ref{extend} and Theorem~\ref{unique};
the second one after a finite base change.

\begin{cor}\label{cor:abelian-strongly}
Any abelian semisimple $E$-rational compatible system is isomorphic to a strongly $E$-rational compatible system.
\end{cor}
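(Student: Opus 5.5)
Given an abelian semisimple $E$-rational compatible system $\{\phi_\lambda:\Gal_K\to\GL_n(\Elambda)\}_{\Pi_E}$, the plan is to build a strongly $E$-rational system from a single member and then transport the isomorphism to all $\lambda$ by uniqueness.

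First, fix one index $\lambda_0\in\Pi_E$ with residue characteristic $\ell_0$. Via $\overline E_{\lambda_0}\simeq\overline\Q_{\ell_0}$, the representation $\phi_{\lambda_0}$ is an abelian semisimple $\ell_0$-adic representation. It is $E$-rational, since its Frobenius characteristic polynomials outside $S\cup S_{\ell_0}$ are the $P_v(T)\in E[T]$.

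Next, apply Proposition~\ref{extend} to $\phi_{\lambda_0}$. This gives a modulus $\mathfrak m$ and an $E$-morphism $\phi:\bS_{\mathfrak m}\times E\to\GL_{n,E}$ with $\phi_{\lambda_0}\simeq\phi\circ\epsilon_{\ell_0}$. The family $\{\phi'_\lambda:=\phi\circ\epsilon_\ell\}_{\lambda\in\Sigma_E}$ of \eqref{phics} is then an abelian semisimple strongly $E$-rational compatible system. Restrict it to $\Pi_E$.

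We now compare the two systems. Both $\{\phi_\lambda\}$ and $\{\phi'_\lambda\}$ are semisimple $E$-rational compatible systems; take the exceptional set to be the union of the two finite sets $S$ and $\mathrm{Supp}(\mathfrak m)$. They are isomorphic at $\lambda_0$. Theorem~\ref{unique} is stated for systems indexed by $\Sigma_E$, so we use the content of its proof, which works for any index set. For each $\lambda$, the Frobenius characteristic polynomials of $\phi_\lambda$ and $\phi'_\lambda$ agree with those of the respective systems at $\lambda_0$, so they agree with each other outside a finite set of places. Chebotarev density and Brauer--Nesbitt, using semisimplicity, then give $\phi_\lambda\simeq\phi'_\lambda$ over $\Elambda$.

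Since each $\phi'_\lambda$ takes values in $\GL_n(E_\lambda)$, the system $\{\phi_\lambda\}$ is isomorphic to the strongly $E$-rational system $\{\phi'_\lambda\}$. The only point needing care is that Theorem~\ref{unique} is applied to systems indexed by a subset $\Pi_E$ with a possibly enlarged exceptional set. This is harmless, because its proof is place-by-place and relies only on Chebotarev density.
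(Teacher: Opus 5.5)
Your proposal is correct and follows essentially the paper's route. The paper gives no further argument, stating only that the corollary is an immediate consequence of Proposition~\ref{extend} (applied to one member $\phi_{\lambda_0}$) and Theorem~\ref{unique}, exactly as you do. Your extra remark, that uniqueness holds for an arbitrary index set $\Pi_E$ after enlarging the exceptional set to $S\cup\mathrm{Supp}(\mathfrak m)$, is a valid filling-in of a detail the paper leaves implicit.
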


\begin{cor}\label{pabisindependent}
Let $\{\rho_\lambda\}_{\Pi_E}$ be a semisimple $E$-rational compatible system.
If there exists $\lambda_0\in\Pi_E$ such that $\rho_{\lambda_0}$ is potentially abelian,
then $\rho_\lambda$ is potentially abelian for all $\lambda\in\Pi_E$.
\end{cor}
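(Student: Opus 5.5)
The plan is to reduce to the abelian case after a finite base change, where Proposition~\ref{extend} and Theorem~\ref{unique} do all the work. Since $\rho_{\lambda_0}$ is potentially abelian, there is a finite extension $L/K$ such that $\rho_{\lambda_0}|_{\Gal_L}$ is abelian. Enlarging $L$, we may take it Galois over $K$; we may also assume $L$ contains $K^{\conn}$ from \S\ref{Descend}, although this is not strictly needed. Restriction preserves $E$-rationality, compatibility and semisimplicity; semisimplicity passes to an open normal subgroup by Clifford's theorem (Proposition~\ref{Clifford}(i)). So $\{\rho_\lambda|_{\Gal_L}\}_{\Pi_E}$ is a semisimple $E$-rational compatible system of $L$, and its member at $\lambda_0$ is abelian.

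Next, apply Proposition~\ref{extend} to the abelian semisimple $E$-rational representation $\rho_{\lambda_0}|_{\Gal_L}$. This produces a modulus $\mathfrak m$ of $L$ and an $E$-morphism $\phi:\bS_{\mathfrak m}\times E\to\GL_{n,E}$ such that $\rho_{\lambda_0}|_{\Gal_L}\simeq\phi_{\lambda_0}$. The family $\{\phi_\lambda\}$ of \eqref{phics} is a semisimple abelian strongly $E$-rational compatible system of $L$, indexed by all of $\Sigma_E$. We restrict it to $\Pi_E$ and view it over $\Elambda$.

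Theorem~\ref{unique} is stated for systems indexed by $\Sigma_E$. Its proof, however, is a Chebotarev and Brauer--Nesbitt argument: isomorphism at $\lambda_0$ forces the Frobenius characteristic polynomials to agree outside a finite set, hence equal traces on a dense set, hence isomorphism at every other $\lambda$. That argument works verbatim for any index set $\Pi_E$ containing $\lambda_0$. The one point needing care is that the exceptional finite sets of the two systems must be merged, which is harmless. Applying it gives $\rho_\lambda|_{\Gal_L}\simeq\phi_\lambda$ for all $\lambda\in\Pi_E$, so every $\rho_\lambda|_{\Gal_L}$ is abelian and every $\rho_\lambda$ is potentially abelian.

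I expect the main (mild) obstacle to be the bookkeeping around the index set and the exceptional sets in Theorem~\ref{unique}. The remaining ingredients, namely semisimplicity after restriction and existence of the Serre-group model, are supplied directly by Proposition~\ref{Clifford}(i) and Proposition~\ref{extend}.
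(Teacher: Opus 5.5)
Your proposal is correct and is exactly the paper's argument: the paper obtains this corollary as an immediate consequence of Proposition~\ref{extend} and Theorem~\ref{unique} after a finite base change making $\rho_{\lambda_0}$ abelian, just as you do. Your remarks on semisimplicity of the restriction (via Clifford) and on Theorem~\ref{unique} holding for an arbitrary index set $\Pi_E$ make explicit details that the paper leaves implicit.
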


So from now on, for any abelian semisimple $E$-rational compatible system we always consider a strongly $E$-rational representative in its isomorphism class.

\begin{remark}
Corollary~\ref{cor:abelian-strongly} is in line with the classical result  that a complex representation of a finite abelian group $G$ with field of traces $E$ has a model over $E$ -- because $\C[G]$ is abelian and so all Schur indices are trivial; cf.~\cite[$\mathsection12.2$]{Se77}.
\end{remark}

\begin{defi}
For an $E$-morphism $ \phi:\bS_\mathfrak{m}\times E\to\GL_{n,E}$, 
denote by $\bG_\phi$ the closed subgroup $\phi(\bS_\mathfrak{m}\times E)$ of $\GL_{n,E}$. %and by $\iota_\phi$ the canonical inclusion $\bG_\phi\to\GL_{n,E}$.}
\end{defi}

\begin{thm}\label{converse}
Let $\{\phi_\lambda:\Gal_K\to\GL_n(E_\lambda)\}_{\Sigma_E}$ 
be an abelian semisimple $E$-rational compatible system
with $\{\bG_{\phi_\lambda}\}_{\Sigma_E}$ as the system of algebraic monodromy groups. 
There exists an $E$-morphism $\phi:\bS_\mathfrak{m}\times E\to\GL_{n,E}$
of some Serre group such that $\{\phi_\lambda\}$ is the compatible system
attached to $\phi$ as \eqref{phics}. In particular, 
\begin{equation}\label{abindep}
[\bG_\phi\subset\GL_{n,E}]\times E_\lambda \simeq [\bG_{\phi_\lambda}\subset \GL_{n,E_\lambda}]
\hspace{.3in}\text{for all} ~\lambda\in\Sigma_E.
\end{equation}
\end{thm}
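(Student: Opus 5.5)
We pick one place $\lambda_0\in\Sigma_E$ and apply Proposition~\ref{extend} to the abelian semisimple $E$-rational representation $\phi_{\lambda_0}$. This produces a modulus $\mathfrak m$ and an $E$-morphism $\phi:\bS_\mathfrak{m}\times E\to\GL_{n,E}$ such that $\phi_{\lambda_0}$ is equivalent to the $\lambda_0$-member $\phi\circ\epsilon_{\ell_0}$ of the strongly $E$-rational compatible system \eqref{phics}. Here the equivalence holds over $E_{\lambda_0}$: both representations are semisimple with $E_{\lambda_0}$-valued images and they become isomorphic over $\overline E_{\lambda_0}$, so they are isomorphic over $E_{\lambda_0}$ by the Brauer--Nesbitt footnote. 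Enlarging $\mathfrak m$ if necessary, which is allowed because the pairs $(\bS_\mathfrak m,\{\epsilon_\ell\})$ form a projective system, we may assume that $\mathrm{Supp}(\mathfrak m)$ contains the exceptional set $S$ of $\{\phi_\lambda\}$. The uniqueness statement Theorem~\ref{unique}, in its strongly $E$-rational version, then gives $\phi_\lambda\simeq\phi\circ\epsilon_\ell$ over $E_\lambda$ for every $\lambda\in\Sigma_E$. Since isomorphic compatible systems are identified up to conjugation in $\GL_n(E_\lambda)$, this is what ``$\{\phi_\lambda\}$ is the compatible system attached to $\phi$'' means.

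For \eqref{abindep}, we fix $\lambda$ and conjugate so that $\phi_\lambda=\phi\circ\epsilon_\ell$ literally. The algebraic monodromy group $\bG_{\phi_\lambda}$ is the Zariski closure in $\GL_{n,E_\lambda}$ of $\phi(\epsilon_\ell(\Gal_K))$. By the density property stated in \S\ref{s4.2}, $\epsilon_\ell(\Gal_K)$ is Zariski dense in $\bS_\mathfrak m\times\Q_\ell$, hence also in $\bS_\mathfrak m\times E_\lambda$. The morphism $\phi$ sends a Zariski dense subset onto a Zariski dense subset of its scheme-theoretic image. That image is closed, since the image of a morphism of algebraic groups is closed, and it equals $\bG_\phi\times_E E_\lambda$, because the formation of images commutes with flat base change. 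So $\bG_{\phi_\lambda}=\bG_\phi\times_E E_\lambda$ as subgroups of $\GL_{n,E_\lambda}$, up to the conjugation used above, which is \eqref{abindep}.

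The only delicate point is making Theorem~\ref{unique} applicable. It requires both systems to be indexed by $\Sigma_E$ with a common exceptional set, and the isomorphism at $\lambda_0$ to hold over $E_{\lambda_0}$. Both are handled above, by enlarging the modulus and by semisimplicity together with Brauer--Nesbitt.
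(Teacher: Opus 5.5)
Your proof is correct and follows the paper's route. You apply Proposition~\ref{extend} at one place $\lambda_0$, use Theorem~\ref{unique} to pass to all $\lambda$, and obtain \eqref{abindep} from the Zariski density of $\epsilon_\ell(\Gal_K)$ in $\bS_\mathfrak{m}\times\Q_\ell$. Your write-up adds details the paper leaves implicit: Brauer--Nesbitt descent to $E_{\lambda_0}$, density after finite scalar extension, and compatibility of scheme-theoretic images with flat base change.

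Enlarging $\mathfrak m$ so that its support contains $S$ is unnecessary. Theorem~\ref{unique} does not need a common exceptional set, since one can always work outside the union of the two finite sets. The step is harmless, though.
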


\begin{proof}
By Proposition \ref{extend} and Theorem \ref{unique}, the compatible system is attached to 
a Serre group representation $\phi:\bS_\mathfrak{m}\times E\to\GL_{n,E}$.
Finally, \eqref{abindep} follows from the fact that 
$\epsilon_\ell(\Gal_K)$ (in \eqref{epsilon}) is Zariski dense in $\bS_\mathfrak{m}\times\Q_\ell$ for all $\ell$.
\end{proof}

\begin{prop}\label{extendsys}
Let $\{\phi_\lambda:\Gal_K\to\GL_k(E_\lambda)\}_{\Sigma_E}$ and $\{\psi_\lambda:\Gal_K\to\GL_h(E_\lambda)\}_{\Sigma_E}$
be two abelian semisimple $E$-rational compatible systems with $\{\bG_{\phi_\lambda}\}_{\Sigma_E}$ and $\{\bG_{\psi_\lambda}\}_{\Sigma_E}$ as their systems of algebraic monodromy groups. Let $ \phi:\bS_\mathfrak{m}\times E\to\GL_{k,E}$ and $ \psi:\bS_\mathfrak{m}\times E\to\GL_{h,E}$ be the morphisms from Theorem~\ref{converse}, with $\mathfrak{m}$ as the supremum of the individual moduli. Suppose for some $\lambda_0\in \Sigma_E$ there is an $E_{\lambda_0}$-morphism $\alpha_{\lambda_0}:\bG_{\phi_{\lambda_0}}\to \bG_{\psi_{\lambda_0}}$, such that
\begin{equation}\label{0map}
\psi_{\lambda_0} \simeq
[\Gal_K\stackrel{\phi_{\lambda_0}}{\rightarrow}\bG_{\phi_{\lambda_0}}(E_{\lambda_0})
\stackrel{\alpha_{\lambda_0}}{\rightarrow} \bG_{\psi_{\lambda_0}}(E_{\lambda_0})\subset\GL_h(E_{\lambda_0})].
\end{equation}
Then there is a unique $E$-morphism $\alpha:\bG_\phi\to\bG_\psi$ such that
\begin{equation}\label{satisfy}
\alpha\circ(\phi:\bS_\mathfrak{m}\times E \to\bG_\phi)=(\psi:\bS_\mathfrak{m}\times E \to \bG_\psi).
\end{equation}
 Moreover, the following assertions hold for $\alpha$.
\begin{enumerate}[(i)]
\item For all $\lambda\in\Sigma_E$ the $E_\lambda$-morphism $\alpha_\lambda:=\alpha\otimes E_\lambda:\bG_{\phi_{\lambda}}\to \bG_{\psi_{\lambda}}$ satisfies
\begin{equation}\label{genericmap}
\psi_{\lambda}\simeq
[\Gal_K\stackrel{\phi_{\lambda}}{\rightarrow}\bG_{\phi_{\lambda}}(E_\lambda)
\stackrel{\alpha_{\lambda}}{\rightarrow} \bG_{\psi_{\lambda}}(E_\lambda)\subset\GL_h(E_\lambda)].
\end{equation}
\item \label{item:alphaisom} If $\alpha_{\lambda_0}$ is surjective (or injective, or bijective), then so is $\alpha$, and hence all $\alpha_\lambda$, $\lambda\in \Sigma_E$.
\end{enumerate}
%Let $\{\phi_\lambda:\Gal_K\to\GL_k(E_\lambda)\}_{\Sigma_E}$ and $\{\psi_\lambda:\Gal_K\to\GL_h(E_\lambda)\}_{\Sigma_E}$
%be two abelian semisimple $E$-rational compatible systems 
%with $\{\bG_{\phi_\lambda}\}_{\Sigma_E}$ and $\{\bG_{\psi_\lambda}\}_{\Sigma_E}$ as the systems of algebraic monodromy groups.
%If $\alpha_{\lambda_0}:\bG_{\phi_{\lambda_0}}\to \bG_{\psi_{\lambda_0}}$ is an $E_{\lambda_0}$-morphism
%such that 
%\begin{equation}\label{0map}
%\psi_{\lambda_0} \simeq
%[\Gal_K\stackrel{\phi_{\lambda_0}}{\rightarrow}\bG_{\phi_{\lambda_0}}(E_\lambda)
%\stackrel{\alpha_{\lambda_0}}{\rightarrow} \bG_{\psi_{\lambda_0}}(E_\lambda)\subset\GL_h(E_\lambda)],
%\end{equation}
%then there exists a family $\{\alpha_{\lambda}:\bG_{\phi_{\lambda}}\to \bG_{\psi_{\lambda}}\}_{\Sigma_E}$
%of $E_\lambda$-morphisms such that for all $\lambda\in\Sigma_E$, 
%\begin{equation}\label{genericmap}
%\psi_{\lambda}\simeq
%[\Gal_K\stackrel{\phi_{\lambda}}{\rightarrow}\bG_{\phi_{\lambda}}(E_\lambda)
%\stackrel{\alpha_{\lambda}}{\rightarrow} \bG_{\psi_{\lambda}}(E_\lambda)\subset\GL_h(E_\lambda)].
%\end{equation}
\end{prop}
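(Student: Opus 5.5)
The plan is to construct $\alpha$ by transporting the morphism $\alpha_{\lambda_0}$ back to the Serre group at $\lambda_0$, and then to use that $\bG_\phi$ and $\bG_\psi$ are diagonalizable $E$-groups, so that everything reduces to characters. By Theorem~\ref{converse} we have $\phi_\lambda=\phi\circ\epsilon_\ell$ and $\psi_\lambda=\psi\circ\epsilon_\ell$, where $\phi,\psi$ are defined on a common $\bS_\mathfrak{m}$. Since $\epsilon_\ell(\Gal_K)$ is Zariski dense in $\bS_\mathfrak{m}\times\Q_\ell$, the image $\phi_\lambda(\Gal_K)$ is dense in $\bG_\phi\times E_\lambda$, so $\bG_{\phi_\lambda}=\bG_\phi\times E_\lambda$, and similarly for $\psi$.

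\textbf{Step 1: the relation at $\lambda_0$.} By \eqref{0map} there is $g\in\GL_h(E_{\lambda_0})$ such that $\psi\circ\epsilon_{\ell_0}=g\,(\alpha_{\lambda_0}\circ\phi\circ\epsilon_{\ell_0})\,g^{-1}$ on $\Gal_K$. Both sides are algebraic morphisms $\bS_\mathfrak{m}\times E_{\lambda_0}\to\GL_{h}$ that agree on a Zariski dense subset, hence
\[
\psi=\mathrm{int}(g)\circ\alpha_{\lambda_0}\circ\phi
\]
as $E_{\lambda_0}$-morphisms. Since $\bG_\psi$ is commutative and $\alpha_{\lambda_0}$ lands in $\bG_{\psi_{\lambda_0}}$, I would first try to show that $\mathrm{int}(g)$ may be dropped. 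If $\mathrm{int}(g)$ really changes the embedding, I would instead \emph{define} $\alpha$ directly by requiring \eqref{satisfy}. The existence of $\alpha$ then amounts to showing $\ker(\phi)\subset\ker(\psi)$ as subgroup schemes of $\bS_\mathfrak{m}\times E$. This can be checked after base change to $E_{\lambda_0}$. Over $E_{\lambda_0}$, $\psi$ factors through $\phi$ up to conjugation, and conjugation does not change kernels, so the inclusion holds.

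\textbf{Step 2: existence and uniqueness of $\alpha$.} Given $\ker\phi\subset\ker\psi$, the map $\bS_\mathfrak{m}\times E\to\bG_\phi$ is a quotient map of diagonalizable groups. The universal property of quotients then produces a unique $E$-morphism $\alpha:\bG_\phi\to\bG_\psi$ with $\alpha\circ\phi=\psi$. Uniqueness also follows from the surjectivity of $\phi$ onto $\bG_\phi$.

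\textbf{Step 3: the remaining assertions.} For (i), take $\alpha_\lambda=\alpha\otimes E_\lambda$. Then $\alpha_\lambda\circ\phi_\lambda=\psi\circ\epsilon_\ell=\psi_\lambda$ on the nose, which is stronger than \eqref{genericmap}. For (ii), surjectivity, injectivity (trivial kernel) and bijectivity are all invariant under field extension $E\to E_{\lambda_0}$. The remaining check is that $\alpha\otimes E_{\lambda_0}$ agrees with $\alpha_{\lambda_0}$ up to the conjugation $\mathrm{int}(g)$. Indeed, both satisfy the relation from Step~1 once composed with $\phi$, and $\phi$ is an epimorphism onto $\bG_\phi$. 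Composing with an isomorphism $\mathrm{int}(g)$ does not affect these properties, so the property transfers from $\alpha_{\lambda_0}$ to $\alpha$, and then to every $\alpha_\lambda$ by base change.

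The main obstacle is handling the conjugating element $g$. Equation \eqref{0map} is only an isomorphism of representations, so $\alpha_{\lambda_0}$ is determined only up to $\mathrm{int}(g)$, and the fixed target $\bG_{\psi_{\lambda_0}}$ need not be preserved by $g$ a priori. I plan to avoid this by working only with kernels and the images of $\phi,\psi$. Kernels are invariant under conjugation, and the statement to be proved is insensitive to post-composition with isomorphisms.
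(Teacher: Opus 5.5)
Your proposal is correct and follows essentially the same route as the paper. Zariski density of $\epsilon_{\ell_0}(\Gal_K)$ gives $\ker\phi\subset\ker\psi$ over $E_{\lambda_0}$, hence over $E$; $\psi$ then factors through the quotient $\phi:\bS_\mathfrak{m}\times E\twoheadrightarrow\bG_\phi$, uniqueness comes from surjectivity of $\phi$, and (i) and (ii) follow from the Serre-group construction and base change along $E\to E_\lambda$. Your explicit handling of the conjugating element $g$ (kernels are conjugation-invariant, and $\alpha\otimes E_{\lambda_0}=\mathrm{int}(g)\circ\alpha_{\lambda_0}$) simply makes precise what the paper leaves implicit.
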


\begin{proof}
Note that $\phi_\lambda=(\phi\otimes E_\lambda)\circ \epsilon_\ell$ and  $\psi_\lambda=(\psi\otimes E_\lambda)\circ \epsilon_\ell$. 
Because $\epsilon_\ell$ has Zariski dense image in $\bS_{\mathfrak{m}}(E_\lambda)$, it follows from \eqref{0map},
that $\psi(\mathrm{Ker}(\phi))\times E_{\lambda_0}$ is trivial. This implies that the $E$-morphism 
$\psi:\bS_{\mathfrak m}\to \bG_\psi$ factors through $\phi:\bS_{\mathfrak m}\to \bG_\phi$, 
giving the existence of
$\alpha:\bG_\phi\to\bG_\psi$ such that \eqref{satisfy} holds. The uniqueness 
is clear from the surjectivity of $\phi:\bS_\mathfrak{m} \to\bG_\phi$. Finally, \eqref{genericmap} follows from the construction of compatible 
systems from Serre group representations, and \eqref{item:alphaisom} from the faithfully flatness of $E\to E_\lambda$.
%By Theorem \ref{converse} and 
%$\{(\bS_{\mathfrak m}, \{\epsilon_\ell\}):~\mathfrak m~\text{modulus of}~K \}$
%being a projective system, we may assume the $E$-rational compatible systems 
%$\{\phi_\lambda\}$ and $\{\psi_\lambda\}$
%are attached to Serre group representations 
%$$\phi:\bS_{\mathfrak m}\to\GL_{k,E}\hspace{.3in} \text{and}\hspace{.3in} \psi:\bS_{\mathfrak m}\to\GL_{h,E}$$ 
%for a common modulus $\mathfrak m$. Note that $\mathrm{Im}(\phi)\times E_\lambda=\bG_{\phi_{\lambda}}$
%and $\mathrm{Im}(\psi)\times E_\lambda=\bG_{\psi_{\lambda}}$ for all $\lambda$.
%Since $\psi(\mathrm{Ker}(\phi))\times E_{\lambda_0}$ is trivial by \eqref{0map},
%the $E$-morphism $\psi:\bS_{\mathfrak m}\to \mathrm{Im}(\psi)$ factors through $\phi:\bS_{\mathfrak m}\to \mathrm{Im}(\phi)$.
%Thus, there is a $E$-morphism $\alpha:\mathrm{Im}(\phi)\to \mathrm{Im}(\psi)$ such that 
%$$\psi=\alpha\circ\phi,$$
%and \eqref{0map} follows from the construction of $\alpha$.
%We obtain \eqref{genericmap} for the $\alpha_\lambda:= \alpha\times E_\lambda$ %for all $\lambda$
%by the construction of compatible systems from Serre group representations. 
\end{proof}

\subsection{Weak abelian direct summands}\label{s4.3}
Let $\rho_\ell:\Gal_K\to\GL_n(\overline \Q_\ell)$ and $\psi_\ell:\Gal_K\to\GL_m(\overline \Q_\ell)$
be semisimple $\ell$-adic representations of $K$ that are unramified at almost all $v\in\Sigma_K$.
Denote by $S_{\rho_\ell},S_{\psi_\ell}\subset \Sigma_K$ the set of ramified places of $\rho_\ell$ and $\psi_\ell$, respectively. 
We say that $\psi_\ell$ is a \emph{weak direct summand} of $\rho_\ell$ 
(or $\psi_\ell$ \emph{weakly divides} $\rho_\ell$) if the set
\begin{equation}\label{def:Places-psi-wd-rho}
S_{\psi_\ell\mid\rho_\ell}:=\{v\in\Sigma_K\setminus (S_{\rho_\ell}\cup S_{\psi_\ell}): 
\det(\psi_\ell(\Frob_v)-T\cdot\text{Id}) ~\mathrm{\,divides~\,}\det(\rho_\ell(\Frob_v)-T\cdot\text{Id})\} 
\end{equation}
is of Dirichlet density one. If $\psi_\ell$ is abelian and weakly divides $\rho_\ell$, 
we say that $\psi_\ell$ is a \emph{weak abelian direct summand} of $\rho_\ell$ \cite[Definition 2.3]{BH25}.
For example, the (abelian) subrepresentations of $\rho_\ell$ are weak (abelian) direct summands of $\rho_\ell$.

We state some results from \cite{BH25} on weak abelian direct summands of (semisimple) 
$E$-rational $\ell$-adic representations and $E$-rational compatible systems.

\begin{thm}\label{BH1}\cite[Theorem 1.1]{BH25}
Let $K$ and $E\subset\overline\Q_\ell$ be number fields, and $\rho_\ell:\Gal_K\to\GL_n(\overline\Q_\ell)$
be a semisimple $E$-rational $\ell$-adic representation of $K$. 
If $\psi_\ell$ is a weak abelian direct summand of $\rho_\ell$, then it is locally algebraic, 
and thus its local representations at places above $\ell$ are de Rham. 
\end{thm}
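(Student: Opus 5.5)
The plan is to reduce to characters and then run the transcendence argument behind Theorem \ref{SW}, observing that it only needs algebraic values at finitely many points. Being abelian and semisimple, $\psi_\ell$ becomes a direct sum of continuous characters $\chi_1,\dots,\chi_m:\Gal_K^{\ab}\to\overline\Q_\ell^\times$ over $\overline\Q_\ell$. Local algebraicity is checked summand by summand, so it suffices to prove it for one character $\chi=\chi_j$. For $v\in S_{\psi_\ell\mid\rho_\ell}$, the value $\chi(\Frob_v)$ is a root of $P_v(T)=\det(\rho_\ell(\Frob_v)-T\cdot\mathrm{Id})\in E[T]$. Hence on a density one set of places, $\chi(\Frob_v)$ is algebraic of degree at most $n[E:\Q]$ over $\Q$. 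The de Rham assertion then follows from the equivalence recalled in subsection \ref{s4.1}.

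\emph{Step 1: upgrading to all places outside a finite set.} Let $\bH\subset\GL_{n}\times\mathbb{G}_m$ be the Zariski closure over $\overline\Q_\ell$ of the image of $(\rho_\ell,\chi)$. The condition $\det(\rho_\ell(g)-\chi(g)\mathrm{Id})=0$ is Zariski closed on $\bH$. By Chebotarev, the Frobenii $\Frob_v$ with $v\in S_{\psi_\ell\mid\rho_\ell}$ are dense in $\Gal_K$, so the condition holds on a Zariski dense subset and therefore on all of $\bH$. Thus $\chi(g)$ is an eigenvalue of $\rho_\ell(g)$ for every $g\in\Gal_K$. In particular $\chi(\Frob_v)$ is a root of $P_v$ for \emph{all} $v\notin S_{\rho_\ell}\cup S_{\chi}$, and $\chi$ is unramified outside a finite set.

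\emph{Step 2: Serre's transcendence argument.} Fix $v\mid\ell$ and consider the restriction of $\chi$ to a small open subgroup of $\mathcal O_{K_v}^\times$ via local class field theory. We must show it agrees near $1$ with an algebraic character of $\bT_K\times\overline\Q_\ell$. I would follow Serre's proof of the rationality $\Rightarrow$ local algebraicity direction \cite[Chap.~III, \S3]{Se98} and its completion via Waldschmidt \cite{Wa81}, \cite{He82}. That argument writes $\chi\circ i_\ell$ near $1$ as $\exp(\text{linear form})$ on the $\ell$-adic Lie algebra. It then uses global units and ideal classes to produce finitely many idelic points $x_1,\dots,x_r$ at which $\chi(x_i)$ is, up to roots of unity, a product of Frobenius values $\chi(\Frob_w)$. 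Finally it applies the $\ell$-adic algebraic-values theorem for exponentials to conclude that the linear form is defined over $\overline\Q$ and is an algebraic character. The theorem only uses that the finitely many numbers $\chi(\Frob_{w})$ entering the construction are algebraic. Finitely many algebraic numbers lie in a common number field $E'\supset E$, namely the compositum of the splitting fields of the relevant $P_w$. So Step 1 supplies exactly the input the proof needs.

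\emph{Main obstacle.} Theorem \ref{SW} is stated for representations that are rational over a fixed number field. Here we only know that each value $\chi(\Frob_v)$ lies in the splitting field of $P_v$, of bounded degree, and these fields vary with $v$; there is no a priori single $E'$ with $\chi(\Frob_v)\in E'$ for all $v$. The crux is to verify carefully that Serre's reduction uses only finitely many Frobenius values. These are the values attached to a finite set of prime ideals generating the relevant ray class groups modulo the conductor, plus a finite generating set of the unit group. If that verification fails, I would try a fallback. Take the finitely many $\chi(\Frob_{w_i})$ generating an open subgroup of the image, which is possible because $\chi(\Gal_K)$ is a compact $\ell$-adic Lie group, hence topologically finitely generated. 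Then use Step 1 to bound $\chi(\Frob_v)$ for all other $v$ inside the fixed field $E'$ generated by the $\chi(\Frob_{w_i})$ and the eigenvalues of the $P_{w_i}$, and apply Theorem \ref{SW} directly.
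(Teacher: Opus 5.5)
The paper does not prove this statement; it quotes it from \cite[Theorem 1.1]{BH25}, so your argument has to stand on its own. Your reduction to characters $\chi$ and your Step 1 are fine. Step 1 uses Chebotarev and Zariski closure to show that $\chi(g)$ is an eigenvalue of $\rho_\ell(g)$ for every $g\in\Gal_K$. The proof breaks in Step 2, at the claim that the Serre--Waldschmidt argument only needs finitely many, or even all, of the numbers $\chi(\Frob_w)$ to be algebraic. That claim is false.

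Take $K=\Q$, $\ell$ odd, and let $\omega$ be the Teichm\"uller character. Put $\chi:=\langle\epsilon_\ell\rangle^{1/2}$, where $\langle\epsilon_\ell\rangle=\epsilon_\ell\,\omega^{-1}$ takes values in $1+\ell\Z_\ell$ and the square root is taken there. Then $\chi(\Frob_p)^2=p\,\omega(p)^{-1}$, so every Frobenius value is algebraic of degree at most $2(\ell-1)$. Yet near $1$ the map $\chi\circ i_\ell$ is $x\mapsto x^{-1/2}$, which is not algebraic, so $\chi$ is not locally algebraic (equivalently, not Hodge--Tate).

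So algebraicity of the values alone can give at most rationality of the exponents, i.e.\ that some power $\chi^N$ is locally algebraic. Transcendence input applied to a finitely generated group of global elements cannot do better. In Theorem \ref{SW} it is the uniform coefficient field $E$ that excludes fractional exponents, and using it requires information at infinitely many places.

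Your fallback does not repair this. Step 1 only places $\chi(\Frob_v)$ among the roots of $P_v$, whose splitting fields vary with $v$. Also, $\ell$-adic limits of elements of a number field $E'$ need not lie in $E'$. In the example, $\chi(\Gal_\Q)=1+\ell\Z_\ell$ is topologically generated by a single value $\chi(\Frob_{p_0})$. Nevertheless $\chi(\Frob_p)\notin\Q(\mu_{\ell-1},\chi(\Frob_{p_0}))$ for every $p$ unramified in that field, since $p\,\omega(p)^{-1}$ is not a square there.

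The missing ingredient is an argument that extracts, from the $E$-rationality of all of $\rho_\ell$, a single number field $E'$ with $\chi(\Frob_v)\in E'$ for almost all $v$, after which Theorem \ref{SW} applies. Alternatively, you need some other mechanism that rules out fractional weights. This cannot come from algebraicity of individual eigenvalues. It must use more of the structure of $\rho_\ell$, for instance that the full root set of $P_v$ is stable under $\Gal(\overline\Q/E)$.

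In the example this works as follows. For almost all $p$, $-\chi(\Frob_p)$ is an $E(\mu_{\ell-1})$-conjugate of $\chi(\Frob_p)$, hence also a root of $P_p$. Your Step 1 argument applied to $g\mapsto\det(\rho_\ell(g)+\chi(g)\mathrm{Id})$ then forces $-1$ to be an eigenvalue of $\rho_\ell(1)=\mathrm{Id}$, which is absurd. So $\langle\epsilon_\ell\rangle^{1/2}$ is never a weak abelian direct summand of an $E$-rational $\rho_\ell$, but only because of information your proposal discards. Turning such conjugation constraints into uniform rationality for a general $\chi$ over a general $K$ is the real content of the theorem, and your proposal does not supply it.
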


Given a semisimple $E$-rational compatible system of a number field $K$ indexed by $\Pi_E$
\begin{equation}\label{cs2}
\{\rho_{\lambda}:\Gal_K\to\GL_n( \Elambda)\}_{\Pi_E},
\end{equation}
the connectedness and the \emph{formal bi-character} (see e.g., \cite[$\mathsection2.3$]{Hu23} for definition) 
of the algebraic monodromy group $\bG_\lambda\subset\GL_{n,E_\lambda}$
are both independent of $\lambda$ \cite{Se81},\cite[Theorem 3.19, Remark 3.22]{Hu13}\footnote{
The statements are stated for $\Q$-compatible systems indexed by $\Sigma_\Q$ but the arguments
actually work for our more general setting since they involve only the compatibility 
of $\rho_{\lambda_1}$ and $\rho_{\lambda_2}$ for any pair $\lambda_1,\lambda_2\in\Sigma_E$.}.
Thus, we define the following for the compatible system \eqref{cs2}.

\begin{itemize}
\item We say that \eqref{cs2} is \emph{connected} if for some $\lambda$ (equivalently, for all $\lambda$) $\bG_\lambda$ is connected.
\item If $\bT_\lambda^{\ss}$ is a maximal torus of the semisimple group $[\bG_\lambda^\circ,\bG_\lambda^\circ]$, then
the multiplicity $n_0$ of the zero weight of the faithful representation $\bT_{\lambda,\overline\Q_\ell}^{\ss}\hto\GL_{n,,\overline\Q_\ell}$
is independent of $\lambda$.
\end{itemize}

If \eqref{cs2} is connected, for each $\lambda\in\Pi_E$ there is a maximal weak abelian direct summand 
$$\rho_\lambda^{\wab}:\Gal_K\to \GL_{n_0}(\overline E_\lambda)$$
of $\rho_\lambda$ (that is $n_0$-dimensional\footnote{If $n_0=0$, then $\rho_\lambda$
has no weak abelian direct summands.}), called the \emph{weak abelian part} of $\rho_\lambda$ \cite[Proposition 2.9]{BH25}.
In fact, the following holds:%family $\{\rho_\lambda^{\wab}\}_{\Pi_E}$ can be extended to an $E'$-compatible system, 

\begin{thm}\label{BH2}\cite[Theorem 2.20]{BH25}\footnote{The statement assumes $\Pi_E=\Sigma_E$ 
but the argument there works for any subset $\Pi_E\subset\Sigma_E$.}
Suppose the semisimple $E$-rational compatible system $\{\rho_{\lambda}\}_{\Pi_E}$ is connected with $n_0>0$.
Then there is an $n_0$-dimensional semisimple abelian  strongly $E'$-rational compatible system $\{\rho_{\lambda'}^{\wab}\}_{\Sigma_{E'}}$ 
for some finite extension $E'/E$ such that if $\lambda'\in\Sigma_{E'}$ divides $\lambda\in\Pi_E$, then
$\rho_{\lambda'}^{\wab}$ and $\rho_\lambda^{\wab}$ are equivalent.
\end{thm}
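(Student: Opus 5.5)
The plan is to build the compatible system from a single place $\lambda_0$ using Serre's theory of abelian representations, and then use the uniqueness of the maximal weak abelian direct summand to match it with $\rho_\lambda^{\wab}$ at every other $\lambda\in\Pi_E$.

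\emph{Step 1: $\rho_{\lambda_0}^{\wab}$ is locally algebraic.} Fix $\lambda_0\in\Pi_E$ above $\ell_0$. By definition $\rho_{\lambda_0}^{\wab}$ is a semisimple abelian weak direct summand of the semisimple $E$-rational representation $\rho_{\lambda_0}$. Theorem~\ref{BH1} therefore shows that it is locally algebraic.

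\emph{Step 2: spread it into a system over a finite extension $E'/E$.} By Theorem~\ref{SW}, $\rho_{\lambda_0}^{\wab}$ is $E''$-rational for some number field $E''\subset\overline E_{\lambda_0}$. I take $E'$ to be the compositum of $E$ and $E''$ inside $\overline E_{\lambda_0}$; this determines a place $\lambda_0'|\lambda_0$ of $E'$. Proposition~\ref{extend} then gives a morphism $\psi:\bS_{\mathfrak m}\times E'\to\GL_{n_0,E'}$ whose attached system \eqref{phics},
\[\{\psi_{\lambda'}:\Gal_K\to\GL_{n_0}(E'_{\lambda'})\}_{\Sigma_{E'}},\]
is a semisimple abelian strongly $E'$-rational compatible system with $\psi_{\lambda_0'}\simeq\rho_{\lambda_0}^{\wab}$. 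This is the candidate for $\{\rho^{\wab}_{\lambda'}\}$. It remains to show $\psi_{\lambda'}\simeq\rho_\lambda^{\wab}$ for every $\lambda\in\Pi_E$ and every $\lambda'|\lambda$.

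\emph{Step 3: $\psi_{\lambda'}$ weakly divides $\rho_\lambda$.} Let $\lambda\in\Pi_E$ and $\lambda'|\lambda$. Fix an identification $\overline E'_{\lambda'}\simeq\overline E_\lambda$ over $E_\lambda$ as in \eqref{cs''}. For $v$ outside a finite set, the Frobenius characteristic polynomial of $\psi_{\lambda'}$ at $v$ is a fixed polynomial $Q_v\in E'[T]$, independent of $\lambda'$. Since $\psi_{\lambda_0'}\simeq\rho_{\lambda_0}^{\wab}$ weakly divides $\rho_{\lambda_0}$, $Q_v$ divides $P_v$ for $v$ in a set of Dirichlet density one. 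Both polynomials are independent of the place, so $\psi_{\lambda'}$ is a weak abelian direct summand of $\rho_\lambda$ (viewed over $\overline E_\lambda$).

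\emph{Step 4: identify it with $\rho_\lambda^{\wab}$.} I would use \cite[Proposition 2.9]{BH25} in the following form: in the connected case, $\rho_\lambda^{\wab}$ is \emph{the} maximal weak abelian direct summand, so every weak abelian direct summand is isomorphic to a subrepresentation of it. Then $\psi_{\lambda'}$ embeds into $\rho_\lambda^{\wab}$. Both have dimension $n_0$, so they are equivalent, and this holds for all $\lambda\in\Pi_E$.

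I expect Step 4 to be the main obstacle, because it needs the precise maximality statement of \cite{BH25} and not only the existence of a summand of dimension $n_0$. If only the existence of such a summand were available, I would argue through the algebraic monodromy group instead. By Theorem~\ref{coeffd}, after enlarging $E'$ we may assume $\bG_{\lambda'}$ is a connected split reductive group. The weights of a maximal torus $\bT_{\lambda'}$ on $E'^n_{\lambda'}$ that are trivial on $\bT_{\lambda'}\cap[\bG_{\lambda'},\bG_{\lambda'}]$ are precisely the characters of $\bG_{\lambda'}^{\ab}$ occurring in the representation. Since $n_0$ is the multiplicity of the zero weight of $\bT^{\ss}$, these weights account for exactly $n_0$ dimensions. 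Hence $\rho^{\wab}_{\lambda'}$ is the composite of $\rho_{\lambda'}$ with a canonical $n_0$-dimensional representation of the torus $\bG^{\ab}_{\lambda'}$. A weak abelian summand, viewed through Zariski density of Frobenius elements, must factor through these characters, which gives the required maximality.
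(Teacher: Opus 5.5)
The paper gives no proof of this statement itself; it imports it from \cite[Theorem 2.20]{BH25}, so there is nothing in-paper to compare against. Your main line (Steps 1--4) is a correct and complete proof, built from exactly the ingredients the paper recalls in Section~\ref{s4}:
\begin{enumerate}[(a)]
\item Theorem~\ref{BH1} at a single place $\lambda_0$.
\item Theorem~\ref{SW} and Proposition~\ref{extend}, which spread $\rho_{\lambda_0}^{\wab}$ into a Serre-group system over its field of rationality. This is where the extension $E'/E$ in the statement comes from.
\item Compatibility, which gives weak divisibility of $\rho_\lambda$ at every other place.
\item The maximality of $\rho_\lambda^{\wab}$ from \cite[Proposition 2.9]{BH25} together with $\dim\rho_\lambda^{\wab}=n_0$.
\end{enumerate}
Even if that maximality were phrased only as weak divisibility, equal degrees would force equal Frobenius characteristic polynomials on a density-one set. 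Hence $\psi_{\lambda'}\simeq\rho_\lambda^{\wab}$ by Chebotarev and Brauer--Nesbitt.

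Your fallback paragraph, however, is not yet an argument. The claim that a weak abelian summand ``must factor through these characters'' is precisely the maximality statement you are trying to avoid citing. The missing step is to pass to the Zariski closure $\bH_\lambda$ of the image of $\rho_\lambda\oplus\psi_{\lambda'}$.
\begin{enumerate}[(1)]
\item Divisibility of the two characteristic polynomials is a Zariski-closed condition. It holds on the Frobenii of a density-one set, which are Zariski dense in $\bH_\lambda$, so it holds on all of $\bH_\lambda$.
\item Apply this to an element $(1,s)$ of the kernel of $\bH_\lambda\to\bG_\lambda$. It forces the semisimple element $s$ to be unipotent, so $s=1$.
\item Hence $\psi_{\lambda'}$ factors through $\bG_\lambda$, and then through the torus $\bG_\lambda^{\ab}$.
\item Comparing weights on a maximal torus, using connectedness, then embeds $\psi_{\lambda'}$ in your canonical $n_0$-dimensional representation.
\end{enumerate}
Since your main line only uses the cited maximality, this fallback is not needed.
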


\section{CM motives and potentially abelian representations}\label{s5}

Because of the abelian theory in subsections \ref{s4.1}-\ref{s4.2} 
(in particular Serre-Waldschmidt's  Theorem~\ref{SW}),
%Theorem given in Theorem~\ref{SW} and the concept of being locally \textcolor[rgb]{0,0,1}{algebraic}, 
the Serre group can be regarded as the key tool to understand $E$-rational semisimple abelian compatible systems of number fields $K$. In this section, we explain that for $E$-rational semisimple potentially abelian compatible systems, the Taniyama group, a certain motivic Galois group, plays an analogous role. Following Deligne et al., this is built on the theory of CM motives \cite[II--IV]{DMOS82},\cite[Chap. I]{Sc88}. We recall some of the foundational results and then study the $\lambda$-independence of algebraic monodromy groups of semisimple potentially abelian $E$-compatible systems (Theorem \ref{pabindep}).

%In this section we study $\lambda$-independence of algebraic monodromy groups of semisimple potentially 
%abelian $E$-compatible systems (Theorem \ref{pabindep})
%by the theory of CM motives \cite[II--IV]{DMOS82},\cite[Chap. I]{Sc88}.
Let $K$ and $E$ be number fields, and fix an embedding $\sigma:K\to\C$. 

\subsection{Motivic Galois group $\bM_K^{\CM}$}\label{s5.1}
The category of motives $\mathcal M_K$ (for absolute Hodge cycles) over $K$ 
 with the $\otimes$-functor $H_\sigma: \mathcal M_K \to \text{Vec}_\Q$ (the category of $\Q$-vector spaces)
is a \emph{semisimple Tannakian category}. The category $\mathcal{CM}_K$ of \emph{CM motives over $K$}
is the Tannakian subcategory of $\mathcal M_K$ generated by \emph{Artin motives} over $K$
and the motives of abelian varieties over $K$ that admit \emph{complex multiplications} over $\overline K$.
According to Tannakian theory, there is a pro-reductive group $\bM_K^{\CM}$ over $\Q$,
called the \emph{motivic Galois group} of $\mathcal{CM}_K$, such that 
$\mathcal{CM}_K$ is equivalent to 
the Tannakian category $\text{Rep}_\Q(\bM_K^{\CM})$ of the finite dimensional 
$\Q$-representations of $\bM_K^{\CM}$ \cite[II]{DMOS82},\cite[Chap. I, $\mathsection2$]{Sc88}.

By identifying $\bM_\Q^{\CM}$ with the \emph{Taniyama group},
the following assertions about $\bM_K^{\CM}$ are obtained  \cite[III, IV]{DMOS82}.

\begin{thm}\label{Taniyama}
There is an epimorphism $e: \bM_\Q^{\CM}\twoheadrightarrow\Gal_\Q$ of pro-reductive groups over $\Q$ such that the
following assertions hold.
\begin{enumerate}[(i)]
\item The kernel of the epimorphism $e$ is a pro-torus, and  one has $e^{-1}(\Gal_K)=\bM_K^{\CM}$.% for the the preimage of the subgroup $\Gal_K$ of $\Gal_\Q$.}%, where $\Gal_K=\Gal(\overline\Q/K)$ is the subgroup of $\Gal_\Q$.
\item There is a continuous section $s:\Gal_\Q\to \bM_\Q^{\CM}(\A_\Q^{f})$ of  $e$ %the epimorphism, 
where $\A_\Q^{f}$ denotes the ring of finite ad\`eles of $\Q$.
 If $s_\ell:\Gal_\Q\to \bM_\Q^{\CM}(\Q_\ell)$ denotes the $\ell$-component of $s$,
the image $s_\ell(\Gal_\Q)$ is Zariski dense in $\bM_\Q^{\CM}$.
\item The continuous morphisms $s|_{\Gal_K}:\Gal_K\to \bM_K^{\CM}(\A_\Q^{f})$ and $s_\ell|_{\Gal_K}:\Gal_K\to \bM_K^{\CM}(\Q_\ell)$
are well-defined, and the image $s_\ell(\Gal_K)$ is Zariski dense in $\bM_K^{\CM}$. 
\item Let $\phi:\bM_K^{\CM}\to\GL_{n,\Q}$ be a $\Q$-representation and 
$M\in \mathcal{CM}_K$ be the object corresponding to $\phi$. 
Then the $\ell$-adic realization $H_\ell(M)$ of $M$
is equivalent the (semisimple) $\ell$-adic representation
$$\phi\circ s_\ell:\Gal_K\to \bM_K^{\CM}(\Q_\ell)\to \GL_n(\Q_\ell).$$
\end{enumerate}
\end{thm}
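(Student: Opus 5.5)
The statement is a repackaging of the main results of \cite[II--IV]{DMOS82} (see also \cite[Chap.~I]{Sc88}). I would deduce it from those results rather than reprove it, organising the deduction around the Tannakian formalism and the identification of $\bM_\Q^{\CM}$ with the Taniyama group.

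\emph{Step 1: the epimorphism $e$ and assertion (i).} Artin motives over $\Q$ form a Tannakian subcategory of $\mathcal{CM}_\Q$ equivalent to continuous $\Q$-representations of $\Gal_\Q$ with finite image. The inclusion of this subcategory is full and closed under subquotients, so it induces a faithfully flat map $e:\bM_\Q^{\CM}\twoheadrightarrow \Gal_\Q$, with $\Gal_\Q$ viewed as a pro-finite constant group scheme. The kernel of $e$ is the motivic group of CM motives over $\overline\Q$. By Deligne's theorem that Hodge cycles on CM abelian varieties are absolutely Hodge \cite[I, IV]{DMOS82}, this kernel is identified with the Serre group $\bS=\varprojlim \bS_\mathfrak{m}$, which is a pro-torus.

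For a number field $K$, restriction of scalars $\mathrm{Res}_{K/\Q}$ and base change $\mathcal{CM}_\Q\to \mathcal{CM}_K$ are adjoint. The general Tannakian statement for finite extensions of the base field (the analogue of \cite[II, \S6]{DMOS82}) then identifies $\bM_K^{\CM}$ with the open subgroup $e^{-1}(\Gal_K)$.

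\emph{Step 2: the section $s$, its density, and (ii)--(iii).} The Taniyama group $\mathbf{T}$ is constructed by Langlands and Deligne \cite[III]{DMOS82} as an extension $1\to\bS\to\mathbf{T}\to\Gal_\Q\to1$, equipped with a continuous splitting $s$ over $\A_\Q^f$. The main theorem of \cite[IV]{DMOS82} is an isomorphism $\bM_\Q^{\CM}\simeq\mathbf{T}$ compatible with the projections to $\Gal_\Q$ and with $s$; transporting $s$ along it gives (ii) apart from density.

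For density, let $Z$ be the Zariski closure of $s_\ell(\Gal_\Q)$ in $\bM_\Q^{\CM}\times\Q_\ell$. Then $Z$ surjects onto $\Gal_\Q$, since $e\circ s=\mathrm{id}$. Moreover $Z\cap\bS$ contains the closure of the image of $\Gal_{K}^{\ab}$ for all $K$, and this image is the system $\epsilon_\ell$ of Serre groups. Zariski density of $\epsilon_\ell(\Gal_K)$ in each $\bS_\mathfrak{m}$ (subsection~\ref{s4.2}), passed to the limit, gives $\bS\subset Z$, and hence $Z$ is everything.

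Assertion (iii) then follows from (i): $s(\Gal_K)\subset e^{-1}(\Gal_K)=\bM_K^{\CM}$, and the density argument applies verbatim with $\Gal_K$ in place of $\Gal_\Q$, because $\bM_K^{\CM}$ is an open subgroup containing $\bS$.

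\emph{Step 3: the realization compatibility (iv).} The adelic realization functor of a CM motive $M$ is by construction the fibre functor $H_{\A^f}$. The Galois action on $H_\ell(M)$ is given through the section $s$ precisely because the isomorphism $\bM_\Q^{\CM}\simeq\mathbf{T}$ in \cite[IV]{DMOS82} is built to match Galois actions on realizations. On CM abelian varieties this is Shimura--Taniyama reciprocity; on Artin motives it holds by construction. Since both sides are tensor functors, compatibility on these generators extends to all of $\mathcal{CM}_K$. Semisimplicity holds because $\mathcal{CM}_K$ is semisimple and $\bM_K^{\CM}$ is pro-reductive.

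The main obstacle is Step 3 together with the identification $\bM_\Q^{\CM}\simeq\mathbf{T}$ in Step 2. These rest on Deligne's absolute-Hodge theorem for CM abelian varieties and on the Shimura--Taniyama reciprocity law, and I would cite them from \cite{DMOS82} rather than reprove them. The remaining verifications (Tannakian bookkeeping and Zariski density) are formal.
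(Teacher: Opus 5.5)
Your overall route is the paper's: (i), (ii) and (iv) are taken from \cite[IV]{DMOS82} (with \cite[Chap.~I, \S6.5]{Sc88} for (i)), and (iii) is deduced from (i) and (ii) as you do. Where you go beyond citation and prove the Zariski density in (ii) yourself, however, the argument in Step~2 does not work as written.

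\textbf{The density argument.} There are two problems.
\begin{enumerate}[(a)]
\item The identification $\bS=\varprojlim\bS_\mathfrak m$ is wrong. The groups $\bS_\mathfrak m$ of subsection~\ref{s4.2} are attached to one fixed field $K$. Each is an extension of a finite ray class group by a torus that is a quotient of $\bT_K$. So $\varprojlim_\mathfrak m\bS_\mathfrak m$ has a profinite component group and a finite-dimensional identity component; it is neither a pro-torus nor $\ker e$. The kernel of $e$ is Deligne's Serre group, a limit over \emph{varying} fields $L$ of connected Serre groups $S^L$.
\item Since $e\circ s=\mathrm{id}$, one has $s_\ell(\Gal_\Q)\cap(\ker e)(\Q_\ell)=\{1\}$. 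Hence nothing coming from $\Gal_K^{\ab}$ lies in $Z\cap\bS$. Serre's groups with modulus occur as \emph{quotients} of $\bM_K^{\CM}$ (roughly, the Tannakian group of CM motives over $K$ with abelian Galois action), not as subgroups of $\ker e$.
\end{enumerate}

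\textbf{How to repair it.} The argument has to run through quotients.
\begin{enumerate}[(1)]
\item You need a surjection $\bM_K^{\CM}\twoheadrightarrow\varprojlim_\mathfrak m\bS_\mathfrak m$ (for the field $K$) carrying $s_\ell|_{\Gal_K}$ to $\epsilon_\ell$. This is a genuine input, coming from the relation of the Taniyama group to Serre's groups in \cite[IV]{DMOS82}, or from Shimura--Taniyama reciprocity together with (iv). You assert it without justification.
\item Serre's density then shows that the closure $Z_K$ of $s_\ell(\Gal_K)$ surjects onto this quotient.
\item Hence $Z^\circ=Z_K^\circ\subset\ker e$ surjects onto its identity component, the connected Serre group $S^K$.
\item Every CM motive over $\overline\Q$ descends to some $K$ over which its Galois action is abelian, so $\ker e=\varprojlim_K S^K$. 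This gives $Z^\circ=\ker e$.
\item Together with $e(Z)=\Gal_\Q$, this yields $Z=\bM_\Q^{\CM}$.
\end{enumerate}
Alternatively, simply cite the density from \cite[IV]{DMOS82}, as the paper does.

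\textbf{Semisimplicity in Step~3.} Semisimplicity of $\mathcal{CM}_K$ and pro-reductivity of $\bM_K^{\CM}$ do not by themselves give semisimplicity of the Galois representation $\phi\circ s_\ell$; for general motives that is an open conjecture. You need the Zariski density in (iii), which makes every $\Gal_K$-stable subspace $\bM_K^{\CM}$-stable.
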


\begin{proof}
Assertions (i) and (ii) are obtained in \cite[IV]{DMOS82}; (i) can also be seen in \cite[Chap. I, $\mathsection6.5$]{Sc88}. 
Then (iii) follows from (i) and (ii).
Assertion (iv) is stated in \cite[IV, p.265]{DMOS82}.
\end{proof}

\subsection{Representations of $\bM_K^{\CM}$ with $E$-coefficients}\label{s5.2}
There is also the useful Tannakian category $\mathcal{CM}_K(E)$ of \emph{CM motives over $K$ with coefficients in $E$}, 
whose objects are pairs $(M,\theta)$, 
with $M$ an object in $\mathcal{CM}_K$ and $\theta: E\to \End(M)$ an embedding of $\Q$-algebras. 
It is equivalent to 
the Tannakian category 
$\text{Rep}_E(\bM_K^{\CM}\times E)$ of the finite dimensional $E$-representations  of $\bM_K^{\CM}\times E$
such that 
\begin{equation}\label{cd}
\begin{aligned}
\xymatrix{
 \mathcal{CM}_K(E) \ar[d]_{\simeq} \ar[r]^-{\text{Forget}} & \mathcal{CM}_K \ar[d]^{\simeq}\\
\text{Rep}_E(\bM_K^{\CM}\times E) \ar[r]^-{\mathrm{Res}_{E/\Q}} & \text{Rep}_\Q(\bM_K^{\CM}) 
}
\end{aligned}
\end{equation}
is commutative, where the top map sends $(M,\theta)$ to $M$ and 
the bottom map sends an $E$-representation $\phi$ to the $\Q$-representation $\mathrm{Res}_{E/\Q}(\phi)$ in \eqref{eq:RS}
\cite[Chap. I, $\mathsection3$]{Sc88}.

Given a correspondence
\begin{equation}\label{rhoE}
(\phi:\bM_K^{\CM}\times E\to\GL_{k,E})\hspace{.2in} \leftrightarrow\hspace{.2in} (M,\theta),
\end{equation}
we obtain two families of $\lambda$-adic representations of $K$ as follows.
Firstly, composing $\phi$ with $\{s_\ell\}$ in Theorem \ref{Taniyama}(iii) gives the family
\begin{equation}\label{1stcs}
\{\Gal_K\stackrel{s_\ell}{\rightarrow} 
\bM_K^{\CM}(\Q_\ell)\stackrel{\phi\otimes E_\lambda}{\longrightarrow} \GL_k(E_\lambda)\}_{\lambda\in\Sigma_E}.
\end{equation}
Secondly, since the $\ell$-adic realization $H_\ell(M)$  
is a free $E\otimes\Q_\ell=\prod_{\lambda|\ell}E_\lambda$-module, it
admits a decomposition 
\begin{equation}\label{lambdadecompose}
H_\ell(M)=\prod_{\lambda|\ell}[\phi_\lambda:\Gal_K\to\GL_k(E_\lambda)]
\end{equation}
for every $\ell$, which gives the family
\begin{equation}\label{2ndcs}
\{\phi_\lambda:\Gal_K\to\GL_k(E_\lambda)\}_{\lambda\in\Sigma_E}.
\end{equation}
According to \cite[Chap. I, Corollary 6.5.7]{Sc88}, \eqref{2ndcs} is an $E$-rational compatible system of $K$.

\begin{thm}\label{rholambda}
The semisimple potentially abelian $\lambda$-adic representation $(\phi\otimes E_\lambda)\circ s_\ell$
in \eqref{1stcs} is equivalent to $\phi_\lambda$ in \eqref{2ndcs} for all $\lambda$.
 In particular, \eqref{1stcs}
is a semisimple potentially abelian  strongly $E$-rational compatible system of $K$.
\end{thm}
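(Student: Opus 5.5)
We have to show that $(\phi\otimes E_\lambda)\circ s_\ell\simeq\phi_\lambda$ for every $\lambda\in\Sigma_E$. The plan is to compute the $\ell$-adic realization of the underlying $\Q$-motive $M$ in two ways and then split it along $E\otimes\Q_\ell=\prod_{\lambda|\ell}E_\lambda$.

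First, by the commutative diagram \eqref{cd}, the $\Q$-representation of $\bM_K^{\CM}$ attached to $M$ is $\mathrm{Res}_{E/\Q}(\phi)$. Theorem~\ref{Taniyama}(iv) then gives
\[
H_\ell(M)\simeq \mathrm{Res}_{E/\Q}(\phi)\circ s_\ell
\]
as $\Q_\ell$-representations of $\Gal_K$. We need this equivalence to respect the $E$-action, where the $E$-action on the motive side comes from $\theta$ and on the representation side from the $E$-structure of $E^k$. I would justify this by naturality: the realization functor $H_\ell$ is a $\otimes$-functor and the Tannakian equivalence is functorial in endomorphisms. Hence $\theta(e)$ for $e\in E$ is carried to multiplication by $e$ on $E^k$, and the comparison isomorphism of Theorem~\ref{Taniyama}(iv) can be taken to be functorial in $M$, so it commutes with $E$. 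This compatibility is the main point needing care; I would get it from \cite[IV]{DMOS82} and \cite[Chap.~I, \S3]{Sc88}, where the fiber functor $H_\ell$ on $\mathcal{CM}_K(E)$ is precisely $\omega_\ell\otimes$ the $E$-module structure.

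Second, by Proposition~\ref{see} (with base $\Q$, extension $E$, and $\lambda$ replaced by $\ell$), the $\Q_\ell$-representation $\mathrm{Res}_{E/\Q}(\phi)$ of $\bM_K^{\CM}(\Q_\ell)$ is equivalent to the product over $\lambda|\ell$ of the maps $\bM_K^{\CM}(\Q_\ell)\to\bM_K^{\CM}(E_\lambda)\xrightarrow{\phi\otimes E_\lambda}\GL_k(E_\lambda)$. This decomposition is exactly the one induced by the idempotents of $E\otimes\Q_\ell$ acting through the $E$-structure. We compare it with \eqref{lambdadecompose}, which is the decomposition of $H_\ell(M)$ by the same idempotents acting through $\theta$. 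Since the isomorphism of the first step is $E$-linear, it respects these idempotents and therefore matches the $\lambda$-components. This yields $\phi_\lambda\simeq(\phi\otimes E_\lambda)\circ s_\ell$ over $E_\lambda$.

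For the final assertions we argue as follows.

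\emph{Compatible system.} Since \eqref{2ndcs} is an $E$-rational compatible system by \cite[Chap.~I, Corollary 6.5.7]{Sc88}, the equivalent family \eqref{1stcs} is one too. It is strongly $E$-rational because its members take values in $\GL_k(E_\lambda)$.

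\emph{Semisimplicity.} Each member is semisimple: the group $\bM_K^{\CM}$ is pro-reductive, $s_\ell(\Gal_K)$ is Zariski dense by Theorem~\ref{Taniyama}(iii), and hence $\phi\otimes E_\lambda$ restricted to the image is semisimple.

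\emph{Potential abelianness.} By Theorem~\ref{Taniyama}(i), the identity component of $\bM_K^{\CM}$ is the pro-torus $\ker e$, and $s_\ell^{-1}$ of it is an open subgroup $\Gal_L$ for some finite extension $L/K$. On $\Gal_L$ the representation factors through a torus, so it is abelian, and therefore \eqref{1stcs} is potentially abelian.
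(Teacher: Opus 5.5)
Your proof of the main equivalence follows the paper's argument. You get $H_\ell(M)\simeq\mathrm{Res}_{E/\Q}(\phi)\circ s_\ell$ from \eqref{cd} and Theorem~\ref{Taniyama}(iv), then use Proposition~\ref{see} and the splitting over $E\otimes\Q_\ell=\prod_{\lambda|\ell}E_\lambda$ to match $\lambda$-components. Your discussion of why the comparison is $E$-linear expands the paper's phrase ``compatible with the $E\otimes\Q_\ell$-action''. Your arguments for strong $E$-rationality and semisimplicity are also fine.

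One step in your potential-abelianness paragraph is false as written. Because $s$ is a section of $e$, you have $e(s_\ell(g))=g$ for every $g\in\Gal_K$. Hence $s_\ell^{-1}(\ker e)$ is the trivial group, not an open subgroup $\Gal_L$. You have to pass to finite type first:
\begin{itemize}
\item Let $\bH:=\phi(\bM_K^{\CM}\times E)$, an algebraic group of finite type.
\item The image of the pro-torus $(\ker e)\times E$ is a torus in $\bH$.
\item The quotient of $\bH$ by this torus is a finite-type quotient of the profinite group $\Gal_K=\bM_K^{\CM}/\ker e$, hence finite. So this torus is $\bH^\circ$.
\item Then $\bH^\circ(E_\lambda)$ is closed of finite index in $\bH(E_\lambda)$, hence open.
\item Its preimage under the continuous map $(\phi\otimes E_\lambda)\circ s_\ell$ is an open subgroup $\Gal_L\subset\Gal_K$. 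On $\Gal_L$ the representation takes values in a torus, so it is abelian.
\end{itemize}
Equivalently, $\Gal_L$ is the kernel of $\Gal_K\to\pi_0(\bH)$, and the correct preimage statement is $s_\ell^{-1}(e^{-1}(\Gal_L))=\Gal_L$, with $\phi(e^{-1}(\Gal_L))\subset\bH^\circ$. With this correction the proposal is complete. The paper itself takes semisimplicity and potential abelianness for granted and proves only the equivalence.
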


\begin{proof}
By \eqref{cd}, \eqref{rhoE}, and Theorem \ref{Taniyama}(iv),
the $\ell$-adic realization $H_\ell(M)$ is  $\text{Res}_{E/\Q}(\phi)\circ s_\ell$. 
Then \eqref{lambdadecompose} and Proposition \ref{see} give the equivalence
$$\prod_{\lambda|\ell}\phi_\lambda=\text{Res}_{E/\Q}(\phi)\circ s_\ell\simeq \prod_{\lambda|\ell} (\phi\otimes E_\lambda) \circ s_\ell.$$
that is compatible with the $E\otimes\Q_\ell$-action.
Therefore, $\phi_\lambda\simeq (\phi\otimes E_\lambda) \circ s_\ell$ and we are done.
\end{proof}

\subsection{Potentially locally algebraic representations}\label{s5.3}
\begin{defi}\label{def:plocalg}
An  $\ell$-adic representation  $\phi_\ell:\Gal_K\to\GL_k(\overline\Q_\ell)$ of a number field $K$ is 
said to be \emph{potentially locally algebraic} if
there exists a finite extension $L/K$
such that the restriction $\phi_\ell|_{\Gal_L}$ is (i) semisimple abelian and (ii) locally algebraic (subsection \ref{s4.1}). 
\end{defi}
%{\color{blue} 
%We remark that any $\ell$-adic potentially locally algebraic representation $\phi_\ell$ is semisimple because by Frobenius reciprocity $\phi_\ell$ is a direct summand of the representation $\Ind_{\Gal_L}^{\Gal_K}(\phi_\ell|_{\Gal_L})$, which %and the latter 
%is semisimple by~(i) of Definition~\ref{def:plocalg} and because in characteristic zero induction preserves semisimplicity. Below in Proposition~\ref{pabconverse} we will show that any potentially locally algebraic $\phi_\ell$ is strongly $E$-rational for a suitable number field $E$. But it relies on \cite{DMOS82} and is not an elementary argument.}

\begin{prop}
Let $\phi_\ell:\Gal_K\to\GL_k(\overline\Q_\ell)$ be an  $\ell$-adic representation of a number field $K$.
\begin{enumerate}[(i)]
\item If $\phi_\ell$ is potentially locally algebraic, then $\phi_\ell$ is semisimple.
\item $\phi_\ell(\Gal_K)$ takes values in $\GL_n(F)$ for some finite extension $F\supset\Q_\ell$ in $\overline\Q_\ell$. In particular, 
there exists a number field $E$ and a place $\lambda$ of $E$ above $\ell$ such that $\phi_\ell(\Gal_K)\subset \GL_n(E_\lambda)$.
\end{enumerate}
\end{prop}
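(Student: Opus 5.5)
For (i), I would argue directly with a Maschke-type averaging argument, which avoids any appeal to \cite{DMOS82}. Let $L/K$ be as in Definition~\ref{def:plocalg}, so $\phi_\ell|_{\Gal_L}$ is semisimple abelian. First I replace $L$ by its Galois closure $\tilde L$ over $K$. This is harmless: over $\overline\Q_\ell$ a semisimple abelian representation is simultaneously diagonalizable, restricting it to the open subgroup $\Gal_{\tilde L}$ keeps it diagonalizable, hence semisimple and abelian, and local algebraicity is preserved under finite restriction (subsection~\ref{s4.1}). So set $H:=\Gal_{\tilde L}$, an open normal subgroup of $G:=\Gal_K$ of finite index $d$.

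Now let $W\subset V=\overline\Q_\ell^k$ be a $G$-stable subspace. Since $V|_H$ is semisimple, there is an $H$-equivariant projection $p:V\to W$. I would average it:
$$p':=\frac1d\sum_{g\in G/H}\phi_\ell(g)\,p\,\phi_\ell(g)^{-1}.$$
This is well defined because $p$ commutes with $H$, and dividing by $d$ is possible in characteristic $0$. The map $p'$ is $G$-equivariant with image in $W$, and it is the identity on $W$, so $\ker p'$ is a $G$-stable complement to $W$. Hence $\phi_\ell$ is semisimple. (Alternatively, $\phi_\ell$ is a direct summand of $\Ind_{\Gal_L}^{\Gal_K}(\phi_\ell|_{\Gal_L})$, but the averaging argument is more self-contained.)

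For (ii), this holds for any continuous $\ell$-adic representation and is a Baire category argument. The field $\overline\Q_\ell$ is the countable union of its finite subextensions $F\supset\Q_\ell$, because $\Q_\ell$ has only finitely many extensions of each degree. So $\GL_k(\overline\Q_\ell)$ is the union of the countably many subgroups $\GL_k(F)$, each closed because $F$ is complete, hence closed in $\overline\Q_\ell$. The preimages $\phi_\ell^{-1}(\GL_k(F))$ are therefore countably many closed subgroups covering the compact group $\Gal_K$. By Baire, one of them, say for $F_0$, has nonempty interior and is thus open, hence of finite index. Choosing finitely many coset representatives $g_1,\dots,g_r$ and letting $F$ be the finite extension generated over $F_0$ by the entries of the $\phi_\ell(g_j)$, we get $\phi_\ell(\Gal_K)\subset\GL_k(F)$.

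To realize $F$ as a completion $E_\lambda$, write $F=\Q_\ell(\alpha)$ with minimal polynomial $f\in\Q_\ell[T]$. I approximate $f$ $\ell$-adically by a monic $g\in\Q[T]$ of the same degree. By Krasner's lemma, or continuity of roots, $g$ has a root $\beta\in\overline\Q_\ell$ with $\Q_\ell(\beta)=F$; for $g$ close enough to $f$ it is also irreducible over $\Q_\ell$, hence over $\Q$. Take $E=\Q(\beta)\subset\overline\Q_\ell$. The embedding defines a place $\lambda\mid\ell$, and $E_\lambda$ is the closure of $E$, namely $\Q_\ell(\beta)=F$.

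The only delicate point is this Krasner approximation step. Everything else is formal. The Baire step needs only the countability of finite extensions of $\Q_\ell$ and the closedness of $\GL_k(F)$, both of which are standard.
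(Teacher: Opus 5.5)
Your proposal is correct. The differences from the paper lie in what is proved directly versus what is quoted.

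For (i), the paper embeds $\phi_\ell$, via Frobenius reciprocity, into $\Ind_{\Gal_L}^{\Gal_K}(\phi_\ell|_{\Gal_L})$. It then invokes the standard fact that induction from a finite-index subgroup preserves semisimplicity in characteristic zero. Your Maschke averaging over $G/H$ proves directly the principle that fact rests on: in characteristic $0$, a representation that is semisimple on a finite-index subgroup is semisimple. So your argument is more self-contained, at the cost of a few extra lines.

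Your detour through the Galois closure is harmless but unnecessary. Since $p$ commutes with $\phi_\ell(H)$, the average over left cosets $gH$ is already well defined and $G$-equivariant without $H$ being normal. Also, the paper's proof of (i) never uses DMOS, so the remark that you avoid it does not distinguish the two arguments.

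For (ii), the paper only cites Dickinson and notes that his argument works in general. Your Baire category argument is the standard proof of this fact and is complete as written. The steps are: countably many finite subextensions of $\overline\Q_\ell$, closedness of each $\GL_k(F)$, an open and hence finite-index preimage, and then adjoining the entries of finitely many coset representatives.

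The paper leaves the second sentence of (ii) implicit. Your Krasner approximation fills that in correctly: it produces $E=\Q(\beta)\subset\overline\Q_\ell$ whose completion at the induced place is $E_\lambda=\Q_\ell(\beta)=F$.
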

\begin{proof}
(i) Let $L$ be as in Definition~\ref{def:plocalg}. By Frobenius reciprocity $\phi_\ell$ is a direct summand of the representation $\Ind_{\Gal_L}^{\Gal_K}(\phi_\ell|_{\Gal_L})$. The latter is semisimple because in characteristic zero induction preserves semisimplicity, and $\phi_\ell|_{\Gal_L}$ is semisimple by~(i) of Definition~\ref{def:plocalg}.

(ii) This is general result for continuous representations of a compact group into $\GL_n(\overline\Q_\ell)$. It can be found in \cite[Corollary 5]{Dickinson}. The proof there is only for $\GL_2(\overline \Q_2)$. But the argument is general.
\end{proof}

%In Proposition~\ref{pabconverse} we will show that any potentially locally algebraic $\phi_\ell$ is strongly $E$-rational for a suitable number field $E$. But it %relies on \cite{DMOS82} and 
%is not an elementary argument.
%
%We remark that any $\lambda$-adic potentially locally algebraic representation $\phi_\lambda$ is semisimple because by Frobenius reciprocity $\phi_\lambda$ is a direct sum of the representation $\Ind_{\Gal_L}^{\Gal_K}(\rho_\lambda|_{\Gal_L})$ which is semisimple by~(i) of Definition~\ref{def:plocalg}. 

By Theorems \ref{rholambda} and \ref{SW}, the $\lambda$-adic representations \eqref{1stcs} attached to an
$E$-representation of $\bM_K^{\CM}$ are potentially locally algebraic. 
We will prove a converse result (Proposition~\ref{pabconverse}), which in particular, 
implies that any potentially locally algebraic $\phi_\ell$ is strongly $E$-rational for a suitable number field $E$. 
But it is not an elementary argument.
We first prove a lemma. %, we begin with the following lemma:

\begin{lem}\label{lem:RepsOverFinite}
Let $\bM$ be a pro-reductive group over a field $Q$ of characteristic zero 
and let $\rho:\bM\times F\to \GL_{d,F}$ be a representation over some extension field $F\supset Q$, with algebraic closure $\overline F$. Then there exists a finite extension $Q'\supset Q$ inside $\overline F$, and a representation $\rho':\bM\times Q'\to\GL_{d,Q'}$ such that $\rho\otimes_F\overline F=\rho'\otimes_{Q'}\overline{F}$.
\end{lem}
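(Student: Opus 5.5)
Since $\rho$ is a finite-dimensional representation of a pro-reductive group, it factors through an algebraic (reductive) quotient. So I will first reduce to the case where $\bM$ is an affine algebraic group of finite type over $Q$: I write $\bM=\varprojlim \bM_\alpha$ with reductive algebraic quotients $\bM_\alpha$. The coordinate ring of $\bM\times F$ is $\varinjlim \mathcal O(\bM_\alpha)\otimes_Q F$. The matrix coefficients of $\rho$ are finitely many elements of this ring, so they all lie in some $\mathcal O(\bM_\alpha)\otimes_Q F$, and hence $\rho$ factors through $\bM_\alpha\times F$. It therefore suffices to treat a reductive algebraic group $\bM$ over $Q$.

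Next I pass to $\overline F$. The group $\bM\times\overline F$ is reductive over an algebraically closed field of characteristic zero, so $\rho\otimes_F\overline F$ is semisimple. I decompose it as $\bigoplus_j \tau_j^{a_j}$ with $\tau_j$ irreducible. The key input is rigidity: irreducible representations of a reductive group over an algebraically closed field of characteristic zero are classified by discrete data, namely highest weights together with the action on the finite group $\pi_0$, via Clifford theory for $\bM^\circ\subset\bM$. Consequently the irreducible representations of $\bM\times\overline Q$ and of $\bM\times\overline F$ correspond bijectively under base change $\overline Q\to\overline F$, where $\overline Q$ is the algebraic closure of $Q$ in $\overline F$.

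To justify this correspondence I would argue cleanly as follows. Over $\overline Q$, $\bM$ becomes a split extension of the finite constant group $\pi_0$ by a split connected reductive group. Highest weight theory is insensitive to extension of algebraically closed fields, and so is the representation theory of the finite group $\pi_0$. Hence every irreducible $\tau_j$ is isomorphic to $\tau_j'\otimes_{\overline Q}\overline F$ for some irreducible representation $\tau_j'$ of $\bM\times\overline Q$. An alternative route avoids the classification: use that $\mathrm{Rep}(\bM\times\overline Q)\to\mathrm{Rep}(\bM\times\overline F)$ preserves irreducibility and $\End=$scalars, and that the regular representation $\mathcal O(\bM)\otimes\overline F$ contains every irreducible. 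Every irreducible of $\bM\times\overline F$ then appears in $\mathcal O(\bM)\otimes_Q\overline F=\bigoplus_{\tau'}\tau'^{\dim\tau'}\otimes\overline F$, and so is some $\tau'\otimes\overline F$.

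Finally, I set $\rho'':=\bigoplus_j\tau_j'^{a_j}$, a representation of $\bM\times\overline Q$. It is given by finitely many matrix coefficients in $\mathcal O(\bM)\otimes_Q\overline Q$, so it is defined over some finite extension $Q'\subset\overline Q\subset\overline F$; call the resulting model $\rho'$. Then $\rho'\otimes_{Q'}\overline F\simeq\rho\otimes_F\overline F$, and after conjugating by an element of $\GL_d(\overline F)$ we get equality. The main obstacle is the rigidity step, i.e. showing that base change from $\overline Q$ to $\overline F$ is essentially surjective on irreducibles. The regular-representation argument is the one I would write out, because it handles nonconnected $\bM$ without Clifford theory.
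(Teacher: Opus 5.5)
Your proof is correct, but its key step differs from the paper's. The paper uses $\Ind_{\bM^\circ}^{\bM}(\rho|_{\bM^\circ})$ to reduce to $\bM$ connected. It then passes to a finite extension $Q'$ over which $\bM$ splits, and invokes highest weight theory to see that each absolutely irreducible constituent is already defined over $Q'$.

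You instead prove the rigidity statement directly, without any classification:
\begin{itemize}
\item every irreducible of $\bM\times\overline F$ embeds in $\mathcal O(\bM)\otimes_Q\overline F=(\mathcal O(\bM)\otimes_Q\overline Q)\otimes_{\overline Q}\overline F$;
\item the latter is a direct sum of base changes of irreducible $\overline Q$-representations;
\item these stay irreducible over $\overline F$ (Burnside, since their endomorphisms are scalars), so by Schur each irreducible over $\overline F$ is such a base change;
\item finite generation of the matrix coefficients then descends everything to a finite $Q'\subset\overline Q$.
\end{itemize}

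Your route needs only linear reductivity in characteristic zero and treats nonconnected $\bM$ uniformly. It also makes explicit a point the paper leaves implicit in its reduction to $\bM^\circ$: a direct summand over $\overline F$ of a representation defined over $Q'$ is again, up to isomorphism, defined over a finite extension. The paper's route is shorter given the classification. In the connected case it also identifies $Q'$ as any splitting field of $\bM$, whereas your argument gives no control over $Q'$; that makes no difference for the application in Proposition~\ref{pabconverse}.

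Two small remarks. First, your alternative sketch asserts that $\bM\times\overline Q$ is a split extension of $\pi_0$ by $\bM^\circ$. This is false in general: in the normalizer of the diagonal torus of $\mathrm{SL}_2$, every element of the nonidentity component squares to $-1$. That route would therefore need Clifford theory with projective representations, so you were right to rely on the regular-representation argument.

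Second, the final ``equality'' can only mean isomorphism. Conjugating $\rho'\otimes\overline F$ by an element of $\GL_d(\overline F)$ generally destroys its $Q'$-rationality. The paper's proof likewise only yields that $\rho$ is isomorphic to a representation defined over $Q'$.
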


\begin{proof}
Without loss of generality, we assume $F=\overline F$.
Because $\rho$ is finite-dimensional, it factors via a reductive quotient of $\bM$ defined over $Q$, and so we assume that $\bM$ itself is reductive. Because $\rho$ is a direct summand of the semisimple representation $\Ind_{\bM^\circ}^{\bM} (\rho|_{\bM^\circ})$, we may assume that $\bM$ is connected reductive. By passing from $Q$ to a finite extension $Q'$, inside $F$, we may assume that $\bM$ is split reductive.
% and by passing from $F$ to a finite extension $F'$ inside $\overline F$, we may assume that 
As $\rho$ is a direct sum of (absolutely) irreducible representations, 
it suffices to prove the lemma for $\bM$ connected split reductive and $\rho$ (absolutely) irreducible. 
In this case $\rho$ is classified by highest weight theory \cite[Chapter 22]{Mi17}, and in particular $\rho$ is isomorphic to a representation defined over $Q'$. 
\end{proof}

We have a converse result.

\begin{prop}\label{pabconverse}
Let $\phi_{\lambda}:\Gal_K\to\GL_k(E_{\lambda})$ be a semisimple potentially locally algebraic $\lambda$-adic
representation of $K$. The following assertions hold.
\begin{enumerate}[(i)]
\item There exists an $E_{\lambda}$-representation 
$\phi_0: \bM_K^{\CM}\times E_{\lambda}\to \GL_{k,E_{\lambda}}$ such that 
$\phi_{\lambda}$ is equivalent to $\phi_0\circ s_{\ell}$, where $s_{\ell}$ is the map in Theorem \ref{Taniyama}(iii).
\item There exist a finite extension $E'/E$ and an $E'$-representation
$$\phi':\bM_K^{\CM}\times E'\to \GL_{k,E'}$$ 
such that $\phi_\lambda$ belongs to the semisimple potentially abelian strongly $E'$-rational compatible system
\begin{equation}\label{newcs}
\{\Gal_K  \stackrel{s_\ell}{\longrightarrow}\bM_K^{\CM}(\Q_\ell)
\stackrel{\phi'\otimes E'_{\lambda'}}{\rightarrow}\GL_k(E'_{\lambda'})\}_{\lambda'\in\Sigma_{E'}}.
\end{equation}
\end{enumerate}
\end{prop}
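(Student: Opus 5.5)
Our plan is to reduce to the abelian theory of \S\ref{s4} via induction and then use the Taniyama group description of $\bM_K^{\CM}$ from Theorem~\ref{Taniyama}. Let $L/K$ be a finite extension as in Definition~\ref{def:plocalg}; enlarging $L$, we may assume $L/K$ is Galois. The restriction $\phi_\lambda|_{\Gal_L}$ is semisimple abelian and locally algebraic, so by Theorem~\ref{SW} it is $E''$-rational for some number field $E''$. After enlarging $E''$ to contain $E$, Proposition~\ref{extend} gives an $E''$-morphism $\psi:\bS_\mathfrak{m}^L\times E''\to\GL_{k,E''}$ of a Serre group of $L$ with $\phi_\lambda|_{\Gal_L}\simeq\psi\circ\epsilon_\ell$ (over $E''_{\lambda''}$ for some $\lambda''\mid\lambda$).

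The key input is the classical link between Serre groups and CM motives. By \cite[I--IV]{DMOS82} (cf.\ \cite[Chap.~I]{Sc88}), the maximal abelian quotient of $\bM_L^{\CM}$, or more precisely the quotient of the connected pro-torus $\ker(e)\cap\bM_L^{\CM}$ extended by the abelianized Galois group, surjects onto each Serre group $\bS_\mathfrak{m}^L$. Under this surjection $s_\ell|_{\Gal_L}$ maps to $\epsilon_\ell$, since both families are characterized by Frobenius data and the locally algebraic part. Consequently $\psi$ is pulled back to a representation $\psi_0$ of $\bM_L^{\CM}\times E''$ with $\phi_\lambda|_{\Gal_L}\simeq(\psi_0\otimes E''_{\lambda''})\circ s_\ell|_{\Gal_L}$. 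This compatibility is the step we expect to be the main obstacle. It amounts to checking that the abelian CM motives over $L$, namely those attached to Hecke characters of type $A_0$, realize exactly the Serre-group compatible systems. We would try to cite it from \cite[IV]{DMOS82} or \cite{Sc88} rather than reprove it.

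Next we induce. Since $\bM_L^{\CM}=e^{-1}(\Gal_L)$ is a finite-index normal subgroup of $\bM_K^{\CM}$ by Theorem~\ref{Taniyama}(i), we form $\Psi:=\Ind_{\bM_L^{\CM}}^{\bM_K^{\CM}}\psi_0$ as an algebraic representation over $E''$. Compatibility of $s_\ell$ with $e$ gives $\Psi\circ s_\ell\simeq\Ind_{\Gal_L}^{\Gal_K}(\phi_\lambda|_{\Gal_L})$ over $E''_{\lambda''}$. By Frobenius reciprocity and semisimplicity, $\phi_\lambda$ is a direct summand of this induced representation. Because $s_\ell(\Gal_K)$ is Zariski dense in $\bM_K^{\CM}$ (Theorem~\ref{Taniyama}(iii)), the $\Gal_K$-stable subspaces of $\Psi\circ s_\ell$ coincide with the $\bM_K^{\CM}$-stable ones. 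The summand corresponding to $\phi_\lambda$ is therefore a subrepresentation $\phi_0''$ of $\Psi\otimes E''_{\lambda''}$ with $\phi_\lambda\simeq\phi_0''\circ s_\ell$ over $E''_{\lambda''}$.

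To obtain (i) over $E_\lambda$ itself, we descend. The representation $\phi_0''$ has $E_\lambda$-rational character on the dense set $s_\ell(\Gal_K)$, hence on all of $\bM_K^{\CM}$ (characters are regular functions). We realize $\phi_0$ as a $\Gal(E''_{\lambda''}/E_\lambda)$-stable summand of $\Res_{E''_{\lambda''}/E_\lambda}(\phi_0'')$, in the sense of Proposition~\ref{see'}. Equivalently, we take the Zariski closure of $\phi_\lambda(\Gal_K)$ in $\GL_{k,E_\lambda}$ and note that the map $\bM_K^{\CM}\times E''_{\lambda''}\to\GL_k$ lands in it and is Galois-equivariant, because it is determined by its values on the $E_\lambda$-rational dense image $s_\ell(\Gal_K)$. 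This gives (i).

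For (ii), we apply Lemma~\ref{lem:RepsOverFinite} to $\phi_0$ with $Q=\Q$ and $F=E_\lambda$. This yields a number field $E'\supset E$ inside $\overline E_\lambda$, determining a place $\lambda'\mid\lambda$, and a representation $\phi':\bM_K^{\CM}\times E'\to\GL_{k,E'}$ with $\phi'\otimes\overline E_\lambda\simeq\phi_0\otimes\overline E_\lambda$. Then $(\phi'\otimes E'_{\lambda'})\circ s_\ell$ and $\phi_\lambda$ are semisimple with equal characteristic polynomials, so they are isomorphic over $\overline E_\lambda$ by Brauer--Nesbitt. Theorem~\ref{rholambda}, applied with $E'$ in place of $E$, shows that \eqref{newcs} is a semisimple potentially abelian strongly $E'$-rational compatible system. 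Enlarging $E'$ so that it contains $E_\lambda$-generators if necessary, $\phi_\lambda$ is (equivalent to) its $\lambda'$-member, which gives (ii).
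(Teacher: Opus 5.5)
Your proof is correct, but for (i) it takes a genuinely different route from the paper's.

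\emph{The paper's route.} It goes \emph{up} to $\Q$. It applies \cite[IV, Proposition D.1]{DMOS82}, which is assertion (i) for $K=\Q$ and already has $E_\lambda$-coefficients, to $\Ind_K^\Q(\phi_\lambda)$, and restricts back along $s_\ell|_{\Gal_K}$. It then uses Zariski density of $s_\ell(\Gal_K)$ in $\bM_K^{\CM}$ to cut $\phi_\lambda$ out of $\Res^\Q_K\Ind^\Q_K(\phi_\lambda)$ as an $\bM_K^{\CM}\times E_\lambda$-stable piece. No change of coefficients ever occurs.

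\emph{Your route.} You go \emph{down} to a Galois $L/K$ over which $\phi_\lambda$ becomes abelian. You pass through Serre--Waldschmidt and a Serre group of $L$, then pull back along a morphism $\bM_L^{\CM}\to\bS_\mathfrak{m}^L$ carrying $s_\ell|_{\Gal_L}$ to $\epsilon_\ell$. You induce algebraically from $\bM_L^{\CM}=e^{-1}(\Gal_L)$ to $\bM_K^{\CM}$; coset representatives can be taken in $s_\ell(\Gal_K)$ since $e\circ s_\ell=\mathrm{id}$. Only then do you use the same density argument.

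\emph{What each buys.} In effect you open the black box D.1 and show that the only motivic input needed is the compatibility of the Taniyama group with Serre's groups, i.e.\ that Hecke characters of type $A_0$ come from CM motives. The price is twofold. First, this compatibility must be stated and cited precisely (for instance for $L$ a CM field Galois over $\Q$, which you may arrange by enlarging $L$), rather than in the loose ``maximal abelian quotient'' phrasing you give. Second, you need an extra descent from $E''_{\lambda''}$ to $E_\lambda$.

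On that descent: $E_\lambda$-rationality of the character of $\phi_0''$ would not by itself give a model over $E_\lambda$, because of Schur indices. Two versions do work:
\begin{enumerate}[(a)]
\item Your second formulation, once you conjugate so that $\phi_0''\circ s_\ell=\phi_\lambda$ on the nose and take $E''_{\lambda''}/E_\lambda$ Galois; then Galois descent applies.
\item More simply, the trick from the proof of Theorem~\ref{pabindep}. By Proposition~\ref{see'}, $\Res_{E''_{\lambda''}/E_\lambda}(\phi_0'')\circ s_\ell\simeq\phi_\lambda^{\oplus[E''_{\lambda''}:E_\lambda]}$ over $E_\lambda$. Zariski density of $s_\ell(\Gal_K)$ in $\bM_K^{\CM}\times E_\lambda$ then makes any $\Gal_K$-stable copy of $\phi_\lambda$ inside it $\bM_K^{\CM}\times E_\lambda$-stable.
\end{enumerate}

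Your proof of (ii) is the paper's argument via Lemma~\ref{lem:RepsOverFinite} and Theorem~\ref{rholambda}. The closing ``enlarging $E'$'' is unnecessary: two $E'_{\lambda'}$-representations that are isomorphic over $\overline E_\lambda$ are already isomorphic over $E'_{\lambda'}$ (Noether--Deuring).
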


\begin{proof}
(i). When $K=\Q$, this is established in \cite[IV, Proposition D.1]{DMOS82}.
The general case follows from the $\Q$-case: Let $\phi_0':\bM_\Q^\CM\times E_\lambda \to\GL_{k',E_\lambda}$ be an $E_\lambda$-representations
such that $\phi_0'\circ s_\ell\simeq\text{Ind}_K^\Q(\phi_\lambda)$ for $k'=k\cdot[K:\Q]$. 
Then $\phi_0'\circ s_\ell|_{\Gal_K} \simeq\text{Res}^\Q_K \text{Ind}_K^\Q(\phi_\lambda)$, and moreover $s_\ell|_{\Gal_K}:\Gal_K\to\bM_K^\CM(\Q_\ell)$ 
is the homomorphism from Theorem \ref{Taniyama}(iii).

Now $\phi_\lambda$ is a direct summand of $\Phi:=\text{Res}^\Q_K \text{Ind}_K^\Q(\phi_\lambda)$. So with respect to a suitable basis of the underlying vector space
$E_\lambda^{k'}$ of $\Phi$, the image of $\Gal_K$ preserves the subspace $E_\lambda^{k'-k}\oplus 0^k$, and the action on the quotient is that given by $\phi_\lambda$.
By the Zariski density of $s_\ell(\Gal_K)$ in $\bM^\CM_K$, it follows that the action of $\bM^\CM_K\times E_\lambda$ preserves the same subspace, and hence that there is a 
quotient homomorphism $\phi_0:\bM^\CM_K\times E_\lambda\to\GL_{k,E_\lambda}$, such that $\phi_0\circ s_\ell|_{\Gal_K}=\phi_\lambda$, proving (i).

%The general case follows from the $\Q$-case, 
%(applying to the potentially locally algebraic $\text{Ind}_K^\Q(\phi_\lambda)$), 
%the fact that $\phi_\lambda$ is a subrepresentation 
%of $\text{Res}^\Q_K (\text{Ind}_K^\Q(\phi_\lambda))$, and the construction/properties of 
%$s_\ell:\Gal_K\to \bM_K^{\CM}(\Q_\ell)$ in Theorem \ref{Taniyama}(i),(ii),(iii). 

%(ii). {\color{blue} Let $\phi_0$ be as in (i). Any finite-dimensional representation of a pro-reductive group has a model over a finite extension of the field of definition of the group.\footnote{{\color{blue} Should one give more detail: Reduction to $\rho$ absolutely irreducible, to $G$ connected reductive, to highest weight theory? Or is there a reference, say for $G$ reductive?}} Hence there exist a finite extension $E'/E$, an $E'$-representation
%$$\phi':\bM_K^{\CM}\times E'\to \GL_{k,E'},$$ 
%and a place $\lambda'\in\Sigma_{E'}$ (dividing $\lambda$) 
%such that $\phi_0\times E'_{\lambda'}=\phi'\times E'_{\lambda'}$.
%We are done by (i) and Theorem \ref{rholambda}.}
%

Given $\phi_0$ in (i), by Lemma~\ref{lem:RepsOverFinite} there exist a finite extension $E'/E$, an $E'$-representation
$$\phi':\bM_K^{\CM}\times E'\to \GL_{k,E'},$$ 
and a place $\lambda'\in\Sigma_{E'}$ (dividing $\lambda$) 
such that $\phi_0\otimes E'_{\lambda'}=\phi'\otimes E'_{\lambda'}$.
We are done by (i) and Theorem \ref{rholambda}.
\end{proof}

Below is the main result in this section.

\begin{thm}\label{pabindep} 
Let $\{\phi_\lambda:\Gal_K\to\GL_k(E_\lambda)\}_{\Sigma_E}$ be a semisimple potentially abelian
(strongly) $E$-rational compatible system with $\{\bG_{\phi_\lambda}\}_{\Sigma_E}$
as the system of algebraic monodromy groups. 
Then for some $d>0$ there exists an $E$-morphism $\Phi:\bM_K^{\CM}\times E\to \GL_{kd,E}$ such that 
  $\{\phi_{\lambda}^{\oplus d}\}_{\Sigma_{E}}$ is the $E$-compatible system attached to $\Phi$ as \eqref{1stcs}.
In particular,   %$\{\phi_{\lambda}^{\oplus d}\}_{\Sigma_{E}}$ is strongly $E$-compatible and 
\begin{equation}\label{pabgpindep}
\Phi(\bM_K^{\CM}\times E)\times E_\lambda \simeq \bG_{\phi_\lambda}
\hspace{.3in}\text{for all} ~\lambda\in\Sigma_E.
\end{equation}
%There exists an $E$-morphism $\phi:\bM_K^{\CM}\times E\to \GL_{k,E}$ such that 
%  $\{\phi_{\lambda}\}_{\Sigma_{E}}$ is the $E$-compatible system \eqref{1stcs} attached to $\phi$. In particular,
%\begin{equation}\label{pabgpindep}
%[\phi(\bM_K^{\CM}\times E)\subset\GL_{k,E}] \simeq [\bG_{\phi_\lambda}\subset \GL_{k,E_\lambda}] \hspace{.3in}\text{for all} ~\lambda\in\Sigma_E.
%\end{equation}
\end{thm}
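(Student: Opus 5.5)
Fix $\lambda_0\in\Sigma_E$. Since $\phi_{\lambda_0}$ is semisimple, potentially abelian and $E$-rational, its restriction to $\Gal_L$ (for $L/K$ finite trivializing the component group) is abelian semisimple and $E$-rational. By Theorem~\ref{SW} it is therefore locally algebraic, so $\phi_{\lambda_0}$ is potentially locally algebraic. Proposition~\ref{pabconverse}(ii) then gives a finite extension $E'/E$ and an $E'$-representation $\phi':\bM_K^{\CM}\times E'\to\GL_{k,E'}$ whose attached strongly $E'$-rational system \eqref{newcs} contains $\phi_{\lambda_0}$ at some $\lambda_0'\mid\lambda_0$. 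The system \eqref{newcs} and the extension of scalars of $\{\phi_\lambda\}$ to $E'$ (as in \eqref{cs'}) are both semisimple strongly $E'$-rational, and they agree at $\lambda_0'$. Theorem~\ref{unique} then gives $\phi_\lambda\otimes E'_{\lambda'}\simeq(\phi'\otimes E'_{\lambda'})\circ s_\ell$ for every $\lambda'\in\Sigma_{E'}$.

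Next I descend from $E'$ to $E$ by Weil restriction. Put $d:=[E':E]$ and $\Phi_1:=\Res_{E'/E}(\phi'):\bM_K^{\CM}\times E\to\GL_{kd,E}$ as in \eqref{eq:RS}. By Proposition~\ref{see}, the $\lambda$-adic member of the system attached to $\Phi_1$ is $\prod_{\lambda'\mid\lambda}(\phi'\otimes E'_{\lambda'})\circ s_\ell$, viewed as an $E_\lambda$-representation of dimension $kd$. Each factor is $\phi_\lambda\otimes_{E_\lambda}E'_{\lambda'}$, and as an $E_\lambda$-representation this is $\phi_\lambda^{\oplus[E'_{\lambda'}:E_\lambda]}$. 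Summing over $\lambda'\mid\lambda$ and using $\sum_{\lambda'\mid\lambda}[E'_{\lambda'}:E_\lambda]=d$, the $\lambda$-adic member is $\phi_\lambda^{\oplus d}$. So $\Phi:=\Phi_1$ works, and the $d$ in the statement is $[E':E]$.

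For \eqref{pabgpindep}, I use Theorem~\ref{Taniyama}(iii): $s_\ell(\Gal_K)$ is Zariski dense in $\bM_K^{\CM}$. Hence the Zariski closure of $(\Phi\otimes E_\lambda)(s_\ell(\Gal_K))$ is $\Phi(\bM_K^{\CM}\times E)\times E_\lambda$. Here one has to note that the image of a pro-reductive group is the image of a finite-type quotient, so it is closed. Up to conjugation this closure equals the monodromy group of $\phi_\lambda^{\oplus d}$, and that group is canonically isomorphic to $\bG_{\phi_\lambda}$ via the diagonal embedding.

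The main obstacle is the middle step: checking carefully that Weil restriction turns $\phi'\otimes E'_{\lambda'}$ into copies of $\phi_\lambda$ over $E_\lambda$ for every $\lambda$ simultaneously, and not only for $\lambda_0$. This requires the uniqueness Theorem~\ref{unique} at the level of $E'$-systems, together with the identification in Proposition~\ref{see}. The remaining steps are formal.
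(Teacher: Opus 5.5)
Your proposal is correct and follows essentially the same route as the paper. Serre--Waldschmidt gives potential local algebraicity, Proposition~\ref{pabconverse}(ii) and Theorem~\ref{unique} give $\phi'$ over $E'$ agreeing with $\{\phi_\lambda\otimes E'_{\lambda'}\}$ at every $\lambda'$, and then Weil restriction via Proposition~\ref{see} with $d=[E':E]$ and $\Phi=\Res_{E'/E}(\phi')$, together with the Zariski density of $s_\ell(\Gal_K)$, finishes the proof. Your explicit check that $\phi_\lambda\otimes_{E_\lambda}E'_{\lambda'}$ is $\phi_\lambda^{\oplus[E'_{\lambda'}:E_\lambda]}$ over $E_\lambda$, and your remark on the closedness of the image, only spell out steps the paper leaves implicit.
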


\begin{proof}
By Theorem \ref{SW}, every $\phi_\lambda$ is potentially locally algebraic.
Thus by Proposition \ref{pabconverse}(ii) and Theorem \ref{unique}, there exist a finite extension $E'/E$
and an $E'$-morphism $\phi':\bM_K^{\CM}\times E'\to \GL_{k,E'}$ such that 
\begin{equation}\label{twocs}
\{\phi_\lambda\otimes E'_{\lambda'}\}_{\Sigma_{E'}}\simeq\{(\phi'\otimes E'_{\lambda'})\circ s_\ell\}_{\Sigma_{E'}}.
\end{equation}
%by Proposition \ref{pabconverse}(ii) and Theorem \ref{unique}. 
 By applying restriction of scalars to \eqref{twocs}
with respect to $E'/E$ and using Proposition \ref{see}, we obtain
\begin{equation}\label{long=}
\phi_\lambda^{\oplus [E':E]}\simeq \prod_{\lambda'|\lambda}(\phi_\lambda\otimes E'_{\lambda'})
\simeq \prod_{\lambda'|\lambda} (\phi'\otimes E'_{\lambda'})\circ s_\ell=(\mathrm{Res}_{E'/E}(\phi')\otimes E_\lambda)\circ s_\ell
\end{equation}
for every $\lambda$.
By taking $d=[E':E]$, $\Phi=\mathrm{Res}_{E'/E}(\phi')$,  the first assertion follows from \eqref{long=}.
Since $s_\ell(\Gal_K)$ is Zariski dense in $\bM_K^{\CM}$ (Theorem \ref{Taniyama}(iii))
and the algebraic monodromy groups of $\phi_\lambda^{\oplus d}$ and $\phi_\lambda$ are isomorphic, we obtain \eqref{pabgpindep}.
\end{proof}

\begin{remark}
 Suppose $\rho:\Gal_K\to\GL_{k}(\overline\Q)$ is an Artin representation with trace field $E$. Then it need not be the case that $\rho$ can be defined over $E$, as can be deduced for instance from \cite[Section 12]{Se77}. In particular the associated compatible system is $E$-rational, but not strongly $E$-rational. Therefore one cannot expect the conclusion of Theorem~\ref{pabindep} to hold with $d=1$, unlike in the abelian case; cf.~Corollary~\ref{cor:abelian-strongly}.
% By arguments similar to the proof of Theorem~\ref{pabindep}, one can also show the following: Let $(\phi_\lambda)_{\Pi_E}$ be any $E$-rational compatible system that becomes strongly $E'$-rational over a finite extension $E'$ of $E$. Then $(\phi_\lambda^{\oplus [E':E]})_{\Pi_E}$ is strongly $E$-rational.{\color{magenta} Expand?}
\end{remark}

\section{Proof of Theorem \ref{main1}}\label{s6}
Building on Definition~\ref{def:Glambda-Erat}, we associate, the following $E_\lambda$-reductive groups to a semisimple $E$-rational compatible system $\{\rho_\lambda:\Gal_K\to\GL_n(\Elambda)\}_{\Pi_E}$.
\begin{itemize}
\item $\bG_\lambda$: the algebraic monodromy group of $\rho_\lambda$ defined over $E_\lambda$ with morphism 
$\Gal_K\stackrel{\rho_\lambda}{\rightarrow}\bG_\lambda(E_\lambda)$.
\item $\bG_\lambda^{\cm}:=\bG_\lambda/[\bG_\lambda^\circ,\bG_\lambda^\circ]$, the maximal %\textcolor[rgb]{0,0,1}
{potentially abelian} quotient of $\bG_\lambda$.
\item $\bG_\lambda^{\ab}:=\bG_\lambda/[\bG_\lambda,\bG_\lambda]$, the maximal abelian quotient of $\bG_\lambda$.
\end{itemize}
By Proposition \ref{prop:Glambda-andBC}, we can write
\begin{itemize}
\item $\bG_{\lambda,\overline E_\lambda}$ as the algebraic monodromy group of $\rho_\lambda$.
\end{itemize}

%To prove Theorem \ref{main1}, one may replace $\{\rho_\lambda\}_{\Pi_E}$ with its coefficient extension with respect to some finite extension $E'/E$.

\subsection{Auxiliary compatible systems}\label{s6.1}
\begin{prop}\label{aux}
Let  $\{\rho_\lambda\}_{\Pi_E}$ be a semisimple
$E$-rational compatible system and let $\lambda_0\in\Pi_E$.
\begin{enumerate}[(i)]
\item There exists a semisimple potentially abelian  strongly $E$-rational compatible system $\{\phi_{\lambda}^{\cm}\}_{\Sigma_E}$
of $K$ and
a family of $E_{\lambda}$-representations $\{f_\lambda:\bG_{\lambda}^{\cm}\hto\GL_{k,E_{\lambda}}\}_{\Pi_E}$
such that 
$$\phi_{\lambda}^{\cm}\simeq [\Gal_K\stackrel{\rho_{\lambda}}{\rightarrow}\bG_\lambda(E_{\lambda})
\to \bG_{\lambda}^{\cm}(E_{\lambda})\stackrel{f_\lambda}{\to}\GL_k(E_{\lambda})] \hspace{.3in}\text{for all}~ \lambda\in\Pi_E$$
 and $f_{\lambda_0}$ is faithful.
\item There exist a semisimple abelian  strongly $E$-rational compatible system $\{\phi_{\lambda}^{\ab}\}_{\Sigma_E}$
of $K$ and
a family of $E_{\lambda}$-representations $\{r_\lambda:\bG_{\lambda}^{\ab}\hto\GL_{m,E_{\lambda}}\}_{\Pi_E}$
such that 
$$\phi_{\lambda}^{\ab}\simeq [\Gal_K\stackrel{\rho_{\lambda}}{\rightarrow}\bG_\lambda(E_{\lambda})
\to \bG_{\lambda}^{\ab}(E_{\lambda})\stackrel{r_\lambda}{\to}\GL_m(E_{\lambda})] \hspace{.3in}\text{for all}~ \lambda\in\Pi_E$$
 and $r_{\lambda_0}$ is faithful.
\end{enumerate}
\end{prop}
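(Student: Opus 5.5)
The plan is to build the auxiliary system at $\lambda_0$ first and then transport it to all $\lambda$ using the rigidity of potentially abelian compatible systems (Theorem~\ref{pabindep}, Theorem~\ref{unique}). The transport to other $\lambda$ is done through tensor constructions on $\rho_\lambda$ itself, so that the $\lambda$-factorization through $\bG_\lambda^{\cm}$ comes for free.

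\emph{Step 1: a representation at $\lambda_0$ cut out by tensor constructions.} The quotient $\bG_{\lambda_0}\to\bG_{\lambda_0}^{\cm}$ is a reductive quotient in characteristic zero. Hence there is a faithful $E_{\lambda_0}$-representation $f_{\lambda_0}$ of $\bG_{\lambda_0}^{\cm}$. Moreover, $f_{\lambda_0}\circ(\bG_{\lambda_0}\to\bG_{\lambda_0}^{\cm})$ is a subquotient, hence by semisimplicity a direct summand, of some tensor construction
\[
T(V)=\bigoplus_j V^{\otimes a_j}\otimes (V^\vee)^{\otimes b_j}
\]
of the (faithful) representation of $\bG_{\lambda_0}$. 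Over $E_{\lambda_0}$ I use $\mathrm{Res}$ as in \eqref{eq:RS}, or pass to an $E$-rational form via Proposition~\ref{prop:Glambda-andBC}(iv), since $V$ is only defined over $\overline E_{\lambda_0}$.

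The same tensor construction $T(\rho_\lambda)$ forms a semisimple $E$-rational compatible system. For every $\lambda$, the part of $T(\rho_\lambda)$ on which $[\bG_\lambda^\circ,\bG_\lambda^\circ]$ acts trivially is a subrepresentation $\psi_\lambda$ that factors through $\bG_\lambda^{\cm}$ and is potentially abelian.

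\emph{Step 2: show that $\{\psi_\lambda\}$ is itself a compatible system.} This is the key point and the expected main obstacle. One must show that the invariant subspace under the derived group of the identity component has $\lambda$-independent Frobenius characteristic polynomials.

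My first attempt is to pass to $K^{\conn}$ and use the theory of weak abelian direct summands. The weak abelian part $T(\rho_\lambda)^{\wab}$ has dimension equal to the zero-weight multiplicity $n_0$ of $\bT^{\ss}_\lambda$, which is $\lambda$-independent by the formal bi-character. By Theorem~\ref{BH2}, these parts form a strongly $E'$-rational compatible system over $K^{\conn}$. One then checks that over $K^{\conn}$ the $[\bG^\circ,\bG^\circ]$-invariants coincide with the weak abelian part; the inclusion of the invariants is clear, and equality holds because the dimensions agree, using that zero weight for a maximal torus together with semisimplicity forces triviality on irreducible summands.

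Next I descend from $K^{\conn}$ to $K$. Consider $\{\psi_\lambda\}$ for $\Gal_K$: it is a direct summand of $T(\rho_\lambda)$ stable under $\Gal_K$. Its characteristic polynomials at Frobenius $v$ can be recovered by comparing with $\mathrm{Ind}$, or via Corollary~\ref{pabisindependent}. Alternatively, I apply Theorem~\ref{BH1}/\ref{BH2} to each coset. If this is too delicate, I instead obtain $E$-rationality of $\psi_{\lambda_0}$, use Proposition~\ref{pabconverse} to produce an $E'$-system $\{\phi'_{\lambda'}\}$ with $\phi'_{\lambda_0}\simeq\psi_{\lambda_0}$, and then show $\phi'_\lambda\simeq\psi_\lambda$ via a density argument: both are weak direct summands of $T(\rho_\lambda)$ on a density-one set of Frobenii.

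\emph{Step 3: descend coefficients to $E$ and obtain the factorizations $f_\lambda$.} Via Theorem~\ref{pabindep} and restriction of scalars (Proposition~\ref{see}), I replace $\psi$ by $\psi^{\oplus d}$ or $\mathrm{Res}_{E'/E}$. This yields a strongly $E$-rational potentially abelian system $\{\phi^{\cm}_\lambda\}_{\Sigma_E}$, which extends to all of $\Sigma_E$ by Theorem~\ref{pabindep}. Because $\phi^{\cm}_\lambda$ is a sum of copies of $\psi_\lambda$, it factors through $\bG_\lambda^{\cm}$, giving $f_\lambda$. At $\lambda_0$, the map $f_{\lambda_0}$ remains faithful since it contains the faithful piece from Step 1.

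\emph{Part (ii).} Part (ii) is identical with $[\bG_\lambda,\bG_\lambda]$ in place of $[\bG_\lambda^\circ,\bG_\lambda^\circ]$, using Theorem~\ref{converse} and Corollary~\ref{cor:abelian-strongly} in place of Theorem~\ref{pabindep}. Here the invariant part under the full derived group is the maximal abelian subrepresentation, and $d=1$ suffices.
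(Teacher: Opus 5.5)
\textbf{There is a genuine gap at Step~2, the transfer from $\lambda_0$ to the other $\lambda$, which is the actual content of the proposition.} Your fallback construction at $\lambda_0$ matches the paper: a subrepresentation of a tensor construction, then Theorem~\ref{BH1} (after restricting to a field over which the piece becomes abelian; this gives potential local algebraicity, not $E$-rationality), then Proposition~\ref{pabconverse}, then restriction of scalars. The trouble is what comes after.

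\emph{The first route rests on a false claim.} A zero weight of a maximal torus of $[\bG_\lambda^\circ,\bG_\lambda^\circ]$ does not force an irreducible summand to be trivial. Take a non-CM elliptic curve, so $\bG_\lambda=\GL_2$, and $V\otimes V^\vee=\mathbf 1\oplus\mathrm{ad}^0V$. The $\mathrm{SL}_2$-invariants are $1$-dimensional, but the zero weight has multiplicity $2$. Indeed the trivial character weakly divides $\mathrm{ad}^0$, since $1$ is always a Frobenius eigenvalue. So the weak abelian part is in general strictly larger than the derived-group invariants, and Theorem~\ref{BH2} tells you nothing about $\psi_\lambda$. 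The descent from $K^{\conn}$ via Corollary~\ref{pabisindependent} is also circular, since that corollary presupposes a compatible system.

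\emph{The fallback's final inference fails for the same reason.} You argue that $\phi'_\lambda$ and $\psi_\lambda$ are both weak direct summands of $T(\rho_\lambda)$, ``hence isomorphic''. But weak divisibility does not determine a representation, and you have no control over $\dim\psi_\lambda$ for $\lambda\neq\lambda_0$. Consequently Step~3's claim that $\phi^{\cm}_\lambda$ factors through $\bG^{\cm}_\lambda$ is unsupported.

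\textbf{What is actually needed.} You do not need to identify $\phi^{\cm}_\lambda$ with the full invariant subspace; faithfulness away from $\lambda_0$ is proved separately, in Proposition~\ref{auxgp}. You only need that $\phi^{\cm}_\lambda$ factors, $E_\lambda$-rationally, through $\bG_\lambda$ for every $\lambda\in\Pi_E$.

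The paper obtains this as follows. Form the system $\rho_\lambda\oplus\phi^{\cm}_\lambda$ and let $\bH_\lambda\subset\bG_\lambda\times\bG_{\phi^{\cm}_\lambda}$ be the Zariski closure of the image of $\Gal_K$. The projection $\pi_1\colon\bH_{\lambda_0}\to\bG_{\lambda_0}$ is an isomorphism, and \cite[Corollary 3.12]{HL25} upgrades this to all $\lambda$. Then $f_\lambda$ is induced by $\pi_2\circ\pi_1^{-1}$, which is defined over $E_\lambda$. It kills $[\bG^\circ_\lambda,\bG^\circ_\lambda]$ because $\bG^\circ_{\phi^{\cm}_\lambda}$ is a torus.

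\textbf{Repairing your route.} Your weak-divisibility observation can be sharpened to exactly this statement, giving an elementary substitute for \cite{HL25}:
\begin{enumerate}[(a)]
\item Since $T(\rho)$ has Frobenius characteristic polynomials in $E[T]$ and $\phi'$ is $E'$-rational, divisibility at $\lambda_0'$ propagates to every $\lambda'$ (for every embedding of $E'$).
\item Divisibility by a monic polynomial is a Zariski-closed condition. So by Chebotarev it holds on the whole joint monodromy group of $\rho_\lambda\oplus\phi'_{\lambda'}$.
\item On the kernel of the projection of that group to $\bG_\lambda$, $T(\rho_\lambda)$ acts trivially, so $\phi'_{\lambda'}$ is unipotent there.
\item That kernel embeds as a normal subgroup of unipotent elements of the reductive group $\bG_{\phi'_{\lambda'}}$, hence is trivial.
\end{enumerate}
If you replace ``hence isomorphic'' with this argument and take the $E_\lambda$-rational factorization from the closure $\bH_\lambda$ as the paper does, your fallback becomes a complete proof.
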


\begin{proof}
We demonstrate assertion (i) only since (ii) is completely analogous.
%Let $F$ be a finite extension of $E_{\lambda_0}$ over which $\bG_{\rho_\lambda,F}$ is a subgroup scheme of $\GL_{n,F}$, and 
Denote by 
$V_{\lambda_0}$ the $\overline E_{\lambda_0}$-representation space of $\rho_{\lambda_0}$.
Pick a faithful $E_{\lambda_0}$-representation of $\bG_{\lambda_0}^{\cm}$:
$$f_{\lambda_0}': \bG_{\lambda_0}^{\cm}\to \GL_{d, E_{\lambda_0}}.$$
Since $\bG_{\lambda_0}$ is reductive, it follows from Proposition \ref{prop:Glambda-andBC} and \cite[I. Proposition 3.1(a)]{DMOS82}
that the natural representation
$$\bG_{\lambda_0}\to\bG_{\lambda_0}^{\cm}\stackrel{f_{\lambda_0}'}{\rightarrow} \GL_{d, E_{\lambda_0}},$$
after base change to $\overline E_{\lambda_0}$, is a $\bG_{\lambda_0,\overline E_{\lambda_0}}$-subrepresentation of 
$$\bigoplus_{1\leq j\leq t} (V_{\lambda_0}^{\otimes a_j}\otimes V_{\lambda_0}^{\vee, \otimes b_j})$$
for some $t\in\N$ and $a_j,b_j\in\Z_{\geq 0}$ for $1\leq j\leq t$.
Define the semisimple $\lambda_0$-adic representation
\begin{equation}\label{constructphi}
\phi_{\lambda_0}':\Gal_K \stackrel{\rho_{\lambda_0}}{\rightarrow}\bG_{\lambda_0}(E_{\lambda_0})
\stackrel{}{\rightarrow} \bG_{\lambda_0}^{\cm}(E_{\lambda_0})  \stackrel{f_{\lambda_0}'}{\rightarrow}\GL_d(E_{\lambda_0}).
\end{equation}
It follows that 
\begin{itemize}
\item $\phi_{\lambda_0}'$ is potentially abelian, and
\item $\phi_{\lambda_0}'\otimes\overline E_{\lambda_0}$ is a subrepresentation of the $E$-rational $\bigoplus_{1\leq j\leq t} (\rho_{\lambda_0}^{\otimes a_j}\otimes \rho_{\lambda_0}^{\vee, \otimes b_j})$. %, when considered as $F$-representations.
\end{itemize}

These conditions and Theorem \ref{BH1} imply that $\phi_{\lambda_0}'$ is potentially locally algebraic (subsection \ref{s5.3}).
By Proposition \ref{pabconverse}(ii),
there exist a finite extension $E'/E$ and an $E'$-representation
$$\phi':\bM_K^{\CM}\times E'\to \GL_{d,E'}$$ 
giving rise to a semisimple potentially abelian strongly $E'$-rational compatible system
\begin{equation}\label{newcs2}
\{ \phi'_{\lambda'}:\Gal_K   \to \GL_{d}(E'_{\lambda'})\}_{\lambda'\in\Sigma_{E'}}.
\end{equation}
containing the $\lambda_0$-adic representation $\phi_{\lambda_0}'$, i.e., one has 
$\phi'_{\lambda_0'}=\phi_{\lambda_0}'\otimes E'_{\lambda_0'}$
for some $\lambda_0'\in\Sigma_{E'}$ dividing $\lambda_0$. Then Weil restriction 
as discussed in subsection \ref{sExt} converts $\phi'$ into a morphism 
$$\phi^\cm:\bM_K^{\CM}\times E\to \mathrm{Res}_{E'/E}\GL_{d,E'}\subset \GL_{d[E':E]},$$ 
that induces an strongly $E$-rational potentially abelian compatible system 
\begin{equation}\label{newcs3}
\{ \phi_{\lambda}^\cm:\Gal_K   \to \GL_{d[E':E]}(E_{\lambda})\}_{\lambda\in\Sigma_{E}}
\end{equation}
and as in the proof of Theorem~\ref{pabindep}, one has $\phi_{\lambda_0}^\cm\simeq(\phi'_{\lambda_0})^{[E':E]}$. 
Let $\{\bG_{\phi_\lambda^{\cm}}\}$ 
be the system of algebraic monodromy groups of $\{\phi_\lambda^{\cm}\}$.
Observe that by the faithfulness of $f'_{\lambda_0}$, one has $\bG_{\lambda_0}^\cm\simeq \bG_{\phi_{\lambda_0}^\cm}$.

Consider the semisimple $E$-rational compatible system 
given by direct sum:
\begin{equation}\label{sumtwosys}
\{\rho_\lambda\oplus\phi_{\lambda}^\cm:\Gal_K\stackrel{}{\rightarrow}  
\bG_\lambda(E_\lambda)\times \bG_{\phi_\lambda^{\cm}}(E_\lambda)\hto \GL_n(\overline E_\lambda)\times\GL_{d[E':E]}(\overline E_\lambda)\}_{\Pi_E},
\end{equation}
where the injection $\bG_\lambda(E_\lambda)\hto \GL_n(\overline E_\lambda)$
is given by $j_{E_\lambda}$ in Proposition \ref{prop:Glambda-andBC}(i).
Let $\{\bH_{\lambda}\}_{\Pi_E}$ be the system of reductive groups defined over $E_\lambda$
given by the Zariski closure of the image of $\Gal_K$ in $\bG_\lambda\times \bG_{\phi_\lambda^{\cm}}$.
It follows that $\{\bH_{\lambda}\times \overline E_\lambda\}_{\Pi_E}$
is the system of algebraic monodromy groups of \eqref{sumtwosys}.
Let $\pi_1:\bG_\lambda\times \bG_{\phi_\lambda^{\cm}} \to\bG_\lambda$ 
and $\pi_2:\bG_\lambda\times \bG_{\phi_\lambda^{\cm}}\to\bG_{\phi_\lambda^{\cm}}$ be the natural projections.
For each $\lambda\in \Pi_E$,  we have a natural surjection 
\begin{equation}\label{natonto}
\pi_1: \bH_{\lambda}\to \bG_\lambda,
\end{equation}
which is an isomorphism when $\lambda=\lambda_0$ by construction \eqref{constructphi}.
Hence, \eqref{natonto} is an isomorphism for all $\lambda\in \Pi_E$, 
 because by \cite[Corollary 3.12]{HL25}
it is so after base change to $\Elambda$. Therefore we can define
\begin{equation}\label{auxmap}
\bG_\lambda\stackrel{\pi_1^{-1}}{\rightarrow}\bH_{\lambda}\stackrel{\pi_2}{\rightarrow}\bG_{\phi_\lambda^{\cm}}\subset \GL_{d[E':E], E_{\lambda}}
\end{equation}
for all $\lambda$. 

Since $\pi_2(\bH_{\lambda})=\bG_{\phi_\lambda^{\cm}}$ is potentially abelian,
\eqref{auxmap} factors through (the maximal %\textcolor[rgb]{0,0,1}
{potentially abelian} quotient) $\bG_\lambda^{\cm}$ for all $\lambda$
and we obtain an $E_{\lambda}$-representation
\begin{equation}\label{auxmapf}
f_{\lambda}: \bG_\lambda^{\cm} \to \GL_{d[E':E], E_{\lambda}}
\end{equation}
such that 
\begin{equation}\label{factor}
\phi_{\lambda}^\cm\simeq [\Gal_K\stackrel{\rho_{\lambda}}{\rightarrow}\bG_\lambda(E_{\lambda})
\to \bG_{\lambda}^{\cm}(E_{\lambda})\stackrel{f_{\lambda}}{\to}\GL_{d[E':E]}(E_{\lambda})]
\end{equation}
for all $\lambda\in \Pi_E$, and such that $f_{\lambda_0}=(f'_{\lambda_0})^{\oplus[E':E]}$, and in particular $f_{\lambda_0}$ is faithful. 
\end{proof}

\subsection{Completion of the proof}\label{s6.2}
Let $\{\bG_{\phi_\lambda^{\cm}}\}$ and $\{\bG_{\phi_\lambda^{\ab}}\}$
be the systems of algebraic monodromy groups of $\{\phi_\lambda^{\cm}\}$ and $\{\phi_\lambda^{\ab}\}$
in Proposition \ref{aux}.
If we prove Proposition \ref{auxgp} below,
then Theorem \ref{main1}(i),(ii) hold and we obtain
$$\bG_\lambda^{\cm}\simeq \bG_{\phi_\lambda^{\cm}} \hspace{.3in}\text{and}\hspace{.3in} \bG_\lambda^{\ab}\simeq \bG_{\phi_\lambda^{\ab}}\hspace{.3in}\text{for all}~ \lambda\in\Pi_E.$$
This, together with Theorems \ref{pabindep} and \ref{converse}, imply Theorem \ref{main1}(iii).

\begin{prop}\label{auxgp}
The $E_\lambda$-representations $f_\lambda$ and $r_\lambda$ in Proposition \ref{aux} are faithful for all $\lambda\in\Pi_E$.
\end{prop}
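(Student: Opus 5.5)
The plan is to prove faithfulness of $f_\lambda$ after base change to $\overline E_\lambda$; this suffices because the kernel of a morphism of $E_\lambda$-groups commutes with field extension. The treatment of $r_\lambda$ is identical, with $\bG_\lambda^{\ab}$ and $[\bG_\lambda,\bG_\lambda]$ in place of $\bG_\lambda^{\cm}$ and $[\bG_\lambda^\circ,\bG_\lambda^\circ]$.

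\emph{Reformulation.} Write $D_\lambda:=[\bG_\lambda^\circ,\bG_\lambda^\circ]$. Let $N_\lambda$ be the kernel of the morphism \eqref{auxmap} $\bG_\lambda\to\bG_{\phi^{\cm}_\lambda}$. By construction $D_\lambda\subset N_\lambda$, and $f_\lambda$ is faithful if and only if $N_\lambda=D_\lambda$. Via the isomorphism $\pi_1:\bH_\lambda\simeq\bG_\lambda$ we have $N_\lambda=\ker(\pi_2|_{\bH_\lambda})$. Hence $N_\lambda$ is read off from the algebraic monodromy group $\bH_\lambda$ of the semisimple $E$-rational compatible system $\{\rho_\lambda\oplus\phi^{\cm}_\lambda\}$ in \eqref{sumtwosys}. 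The point is that $\lambda$-independent invariants of this single system pin down $N_\lambda$, and at $\lambda_0$ we already know $N_{\lambda_0}=D_{\lambda_0}$.

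\emph{Step 1: component groups.} By \cite{Se81}, $\pi_0$ is $\lambda$-independent for each of the systems $\{\rho_\lambda\}$, $\{\phi^{\cm}_\lambda\}$ and $\{\rho_\lambda\oplus\phi^{\cm}_\lambda\}$. The kernel of $\Gal_K\to\pi_0$ is the same field $K^{\conn}$ for $\bG_\lambda\simeq\bH_\lambda$ for every $\lambda$. It is also the same field for $\bG_{\phi^{\cm}_\lambda}$ for every $\lambda$, and at $\lambda_0$ (where $\bG^{\cm}_{\lambda_0}\simeq\bG_{\phi^{\cm}_{\lambda_0}}$ and $\pi_0(\bG^{\cm})=\pi_0(\bG)$) this field equals $K^{\conn}$. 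Therefore, for every $\lambda$, the map $\bG_\lambda\to\bG_{\phi_\lambda^{\cm}}$ induces an isomorphism on $\pi_0$. Consequently $N_\lambda\subset\bG_\lambda^\circ$.

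\emph{Step 2: maximal tori.} Fix a maximal torus $\bT_\lambda$ of $\bH^\circ_\lambda\simeq\bG^\circ_\lambda$ over $\overline E_\lambda$. By \cite[Theorem 3.19]{Hu13}, the formal bi-character of $\{\rho_\lambda\oplus\phi^{\cm}_\lambda\}$ is $\lambda$-independent. Concretely, after identifying all $\bT_\lambda$ with one abstract torus $\bT$, the multisets of weights on the $\rho$-part and on the $\phi^{\cm}$-part are independent of $\lambda$. Then:
\begin{itemize}
\item $\bT\cap N_\lambda$ is the common kernel of the $\phi^{\cm}$-weights, so it is $\lambda$-independent.
\item $\bT\cap D_\lambda$, a maximal torus of $D_\lambda$, is also $\lambda$-independent. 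It is determined by the formal bi-character of $\rho$, for example as the identity component of the common kernel of all characters of $\bT$ that extend to $\bG_\lambda^\circ$; equivalently, its cocharacter lattice is spanned by the coroots, which are encoded in the bi-character.
\end{itemize}
At $\lambda_0$ the two subgroups agree. Hence $\bT\cap N_\lambda=\bT\cap D_\lambda$ is a torus, equal to $\bT^{\ss}$, for every $\lambda$.

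\emph{Step 3: conclusion.} Consider $N_\lambda/D_\lambda$, a subgroup of the torus $\bG_\lambda^\circ/D_\lambda$.
\begin{itemize}
\item Dimension: $\dim\bG_\lambda^{\cm}=\mathrm{rank}-\mathrm{ss\,rank}$ is $\lambda$-independent \cite{Hu13}. Alternatively, $N_\lambda^\circ$ is connected reductive with maximal torus $\bT^{\ss}$ and contains $D_\lambda$, which has the same maximal torus. In either case $N_\lambda/D_\lambda$ is finite.
\item Torsion: take $x\in N_\lambda/D_\lambda$. Since $\bT\to\bG_\lambda^\circ/D_\lambda$ is surjective with connected kernel $\bT^{\ss}$, $x$ lifts to some $t\in\bT$ with $t\in N_\lambda$. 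Then $t\in\bT\cap N_\lambda=\bT^{\ss}\subset D_\lambda$, so $x=1$.
\end{itemize}
Thus $N_\lambda=D_\lambda$ and $f_\lambda$ is faithful.

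The main obstacle is Step 2. One must check carefully that the formal bi-character of the direct-sum system really determines the subtorus $\bT\cap D_\lambda$ inside the common abstract torus, compatibly with the identifications coming from the Frobenius tori of \cite{Hu13}. One must also make sure this works over $\overline E_\lambda$ even though $\bG_\lambda$ is only an $E_\lambda$-form. If the bi-character argument is too delicate, the fallback is the dimension count in Step 3 combined with an invariant showing that the finite group $N_\lambda/D_\lambda$ is trivial, such as the order of the torsion of the character group of $\bG^{\cm}_\lambda$ as computed from $\bT$.
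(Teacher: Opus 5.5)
Your treatment of $f_\lambda$ takes a genuinely different route from the paper's and can be made rigorous. However, your claim that $r_\lambda$ is handled identically fails when $\bG_\lambda$ is disconnected.

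\textbf{Comparison for $f_\lambda$.} Only your Step~1 coincides with the paper; it is Proposition~\ref{record}(iii). The paper then proceeds as follows:
\begin{enumerate}
\item It builds a second auxiliary system $\{\psi^{\cm}_\lambda\}$ adapted to a given $\lambda_1$.
\item It restricts to a field $L$ over which both auxiliary systems are abelian.
\item It propagates the surjections $\alpha_{\lambda_0}=g_{\lambda_0}\circ f_{\lambda_0}^{-1}$ and $\beta_{\lambda_1}=f_{\lambda_1}\circ g_{\lambda_1}^{-1}$ to all $\lambda$, using the Serre-group result Proposition~\ref{extendsys}.
\item It concludes from $\alpha_{\lambda_1}\circ\beta_{\lambda_1}=\mathrm{Id}$.
\end{enumerate}
You use only the one system $\rho_\lambda\oplus\phi^{\cm}_\lambda$ together with the $\lambda$-independence of formal bi-characters \cite{Hu13}. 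This reduces faithfulness to the lattice identity $\langle M_\phi\rangle=\mathrm{Ann}(\bT^{\ss})$ in $X^*(\bT)$, where $M_\phi$ is the set of $\phi^{\cm}$-weights. Your Step~3 then finishes correctly. Your route avoids the second auxiliary system and the abelian theory, at the cost of relying on the deeper bi-character theorem.

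\textbf{The obstacle you flag in Step~2 is real.} The cited theorem gives only a $\GL_N$-conjugacy of chains $\bT^{\ss}\subset\bT\subset\GL_N$, that is, a coordinate permutation $\sigma\in S_N$. Such a $\sigma$ need not respect the splitting into $\rho$- and $\phi$-coordinates. (Also, coroots are not encoded in the bi-character, but you only need $\bT^{\ss}$.)

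It is easily repaired. Apply the theorem to $\rho_\lambda\oplus(\phi^{\cm}_\lambda)^{\oplus b}$ with $b>n$; its monodromy group is still $\bH_\lambda$. A weight $w$ then has multiplicity $m_\rho(w)+b\,m_\phi(w)$ with $0\le m_\rho(w)\le n<b$. Hence the lattice isomorphism induced by $\sigma$ preserves $m_\phi$, and therefore preserves $\langle M_\phi\rangle$ as well as $\mathrm{Ann}(\bT^{\ss})$. Equality at $\lambda_0$ then gives equality for every $\lambda$.

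\textbf{The genuine gap is $r_\lambda$.} Replacing $D_\lambda$ by $[\bG_\lambda,\bG_\lambda]$ breaks your argument once $\pi_0(\bG_\lambda)\neq 1$.
\begin{itemize}
\item $[\bG_\lambda,\bG_\lambda]$ need not lie in $\bG_\lambda^\circ$ or be connected; take $\bG_\lambda$ finite non-abelian. So Step~1 only yields $N^{\ab}_\lambda\subset\bG_\lambda^\circ[\bG_\lambda,\bG_\lambda]$.
\item $\bT\cap[\bG_\lambda,\bG_\lambda]$ can be strictly larger than $\bT^{\ss}$. For $\bG=\mathbb{G}_m^2\rtimes\Z/2$ with the swap, one has $[\bG^\circ,\bG^\circ]=1$ while $[\bG,\bG]=\{(t,t^{-1})\}$.
\item That subgroup depends on the action of $\pi_0$ on $\bG_\lambda^\circ/D_\lambda$, which the formal bi-character does not record. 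So Step~2 has no input. The lifting in Step~3, which used $\bT\cap N_\lambda=\bT^{\ss}\subset D_\lambda$, has no analogue either.
\end{itemize}

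Your argument is verbatim correct when $\bG_\lambda$ is connected, but in general you need a further ingredient. One option is the paper's own argument for the abelian systems $\{\phi^{\ab}_\lambda\}$ and $\{\psi^{\ab}_\lambda\}$. There Proposition~\ref{extendsys} applies directly to the possibly disconnected diagonalizable monodromy groups, and no passage to $L$ is needed.
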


\begin{proof}
Again, we consider $\{f_\lambda\}$ only since the other case is completely analogous.
By Proposition \ref{aux}, 
we obtain the surjection (still denoted by $f_\lambda$ for simplicity)
\begin{equation}\label{surj0}
f_{\lambda}:\bG_{\lambda}^{\cm}\twoheadrightarrow\bG_{\phi_{\lambda}^{\cm}}
\end{equation}
for all $\lambda$.
Let $\lambda_1\in\Pi_E$ be distinct from $\lambda_0$. It suffices to show that $f_{\lambda_1}$ is an isomorphism.

By applying Proposition \ref{aux} to the pair $(\{\rho_\lambda\},\lambda_1)$, we obtain similarly 
a ($h$-dimensional) semisimple %\textcolor[rgb]{0,0,1}
{potentially abelian}  $E$-rational compatible system $\{\psi_{\lambda}^{\cm}\}_{\Sigma_E}$
with  $\{\bG_{\psi_{\lambda}^{\cm}}\}_{\Sigma_E}$ as the system of algebraic monodromy groups, and 
also surjections
\begin{equation}\label{surj1}
g_{\lambda}:\bG_{\lambda}^{\cm}\twoheadrightarrow\bG_{\psi_{\lambda}^{\cm}}
\end{equation}
for $\lambda\in\Pi_E$.
We record the properties for the four families $\{\phi_{\lambda}^{\cm}\}$, $\{f_\lambda\}$, 
$\{\psi_{\lambda}^{\cm}\}$, and $\{g_\lambda\}$.

\begin{prop}\label{record}
\begin{enumerate}[(i)] 
\item For $\lambda\in\Pi_E$, the potentially abelian $\phi_{\lambda}^{\cm}$ and $\psi_{\lambda}^{\cm}$ are equivalent to, respectively,
$$\Gal_K\stackrel{\rho_\lambda}{\rightarrow}\bG_\lambda(E_\lambda)\to \bG_\lambda^{\cm}(E_\lambda)
\stackrel{f_{\lambda}}{\rightarrow}\bG_{\phi_{\lambda}^{\cm}}(E_\lambda)\subset\GL_k(E_\lambda)$$
and 
$$\Gal_K\stackrel{\rho_\lambda}{\rightarrow}\bG_\lambda(E_\lambda)\to \bG_\lambda^{\cm}(E_\lambda)
\stackrel{g_{\lambda}}{\rightarrow}\bG_{\psi_{\lambda}^{\cm}}(E_\lambda)\subset\GL_h(E_\lambda).$$
\item The surjection $f_{\lambda}$ (resp. $g_{\lambda}$) is an isomorphism when $\lambda=\lambda_0$ (resp. $\lambda=\lambda_1$).
\item The kernel of $f_{\lambda}$ (resp. $g_{\lambda}$) belongs to  the identity component $\bG_{\lambda}^{\cm,\circ}$ of $\bG_{\lambda}^{\cm}$.
\end{enumerate}
\end{prop}

\begin{proof}
By Proposition \ref{aux}, (i) and (ii) are obvious.
To prove (iii), consider only $f_{\lambda}$.
It suffices to 
show that the component groups of $\bG_{\lambda}^{\cm}$
and $\bG_{\phi_{\lambda}^{\cm}}$ are isomorphic for all $\lambda$.
Since the component groups
$$\pi_0(\bG_{\lambda}^{\cm})\simeq \pi_0(\bG_\lambda)\simeq \pi_0(\bG_{\lambda,\overline E_\lambda})\hspace{.3in}\text{and}\hspace{.31in}
\pi_0(\bG_{\phi_{\lambda}^{\cm}})$$
are independent of $\lambda$ by \cite{Se81}, we are done by (ii).
\end{proof}

Take a finite extension $L/K$ such that $\{\phi_{\lambda}^{\cm}|_{\Gal_L}\}$ 
and $\{\psi_{\lambda}^{\cm}|_{\Gal_L}\}$ are abelian semisimple (strongly) $E$-rational compatible systems,
with $\{\bG_{\phi_{\lambda}^{\cm}}^\circ\}$ and $\{\bG_{\psi_{\lambda}^{\cm}}^\circ\}$ respectively 
as the systems of algebraic monodromy groups \cite{Se81}.
By Proposition \ref{record}(i),(ii), there exist $E_{\lambda_0}$-morphism
$$\alpha_{\lambda_0}:\bG_{\phi_{\lambda_0}^{\cm}}^\circ \stackrel{f_{\lambda_0}^{-1}}{\rightarrow} 
\bG_{\lambda_0}^{\cm,\circ}\stackrel{g_{\lambda_0}}{\rightarrow}\bG_{\psi_{\lambda_0}^{\cm}}^\circ$$
and $E_{\lambda_1}$-morphism
$$\beta_{\lambda_1}:\bG_{\psi_{\lambda_1}^{\cm}}^\circ \stackrel{g_{\lambda_1}^{-1}}{\rightarrow} 
\bG_{\lambda_1}^{\cm,\circ}\stackrel{f_{\lambda_1}}{\rightarrow} \bG_{\phi_{\lambda_1}^{\cm}}^\circ,$$
that are both surjective, such that 
$$\psi_{\lambda_0}^{\cm}|_{\Gal_L}\simeq [\Gal_L\stackrel{\phi_{\lambda_0}^{\cm}}{\rightarrow}\bG_{\phi_{\lambda_0}^{\cm}}^\circ(E_{\lambda_0})
\stackrel{\alpha_{\lambda_0}}{\rightarrow} \bG_{\psi_{\lambda_0}^{\cm}}^\circ(E_{\lambda_0})\subset\GL_h(E_{\lambda_0})]$$
and
$$\phi_{\lambda_1}^{\cm}|_{\Gal_L}\simeq [\Gal_L\stackrel{\psi_{\lambda_1}^{\cm}}{\rightarrow}\bG_{\psi_{\lambda_1}^{\cm}}^\circ(E_{\lambda_1})
\stackrel{\beta_{\lambda_1}}{\rightarrow} \bG_{\phi_{\lambda_1}^{\cm}}^\circ(E_{\lambda_1})\subset\GL_k(E_{\lambda_1})].$$
Then Proposition \ref{extendsys} implies the existence of two systems 
$\{\alpha_{\lambda}:\bG_{\phi_{\lambda}^{\cm}}^\circ \to \bG_{\psi_{\lambda}^{\cm}}^\circ\}$
and $\{\beta_{\lambda}:\bG_{\psi_{\lambda}^{\cm}}^\circ \to \bG_{\phi_{\lambda}^{\cm}}^\circ\}$
 of surjective $E_\lambda$-morphisms such that 
$$[\Gal_L\stackrel{\psi_{\lambda}^{\cm}}{\rightarrow}\bG_{\psi_{\lambda}^{\cm}}^\circ(E_\lambda)\subset\GL_h(E_\lambda)]
\simeq[\Gal_L\stackrel{\phi_{\lambda}^{\cm}}{\rightarrow}\bG_{\phi_{\lambda}^{\cm}}^\circ(E_\lambda)
\stackrel{\alpha_\lambda}{\rightarrow}\bG_{\psi_{\lambda}^{\cm}}^\circ(E_\lambda)\subset\GL_h(E_\lambda)]$$
and 
$$[\Gal_L\stackrel{\phi_{\lambda}^{\cm}}{\rightarrow}\bG_{\phi_{\lambda}^{\cm}}^\circ(E_\lambda)\subset\GL_k(E_\lambda)]
\simeq [\Gal_L\stackrel{\psi_{\lambda}^{\cm}}{\rightarrow}\bG_{\psi_{\lambda}^{\cm}}^\circ(E_\lambda)
\stackrel{\beta_\lambda}{\rightarrow}\bG_{\phi_{\lambda}^{\cm}}^\circ(E_\lambda)\subset\GL_k(E_\lambda)]$$
for all $\lambda$. We claim that we can modify $\alpha_\lambda$ and $\beta_\lambda$, so that in fact we have equality: 
Let us explain this for $\alpha_\lambda$, the other case being analogous. Let $\gamma_\lambda\in\GL_h(E_\lambda)$ be such that,
denoting by $c_{\gamma_\lambda}$ conjugation by $\gamma_\lambda$, we have  
$\psi_{\lambda}^{\cm}= c_{\gamma_\lambda}\circ\alpha_\lambda\circ \phi_\lambda^\cm$. Because $\psi_{\lambda}^{\cm}(\Gal_L)$ and 
$\phi_\lambda^\cm(\Gal_L)$ have Zariski dense image and $\alpha_\lambda$ is surjective, the conjugation map $c_{\gamma_\lambda}$
induces an automorphism of $\bG_{\psi_{\lambda}^{\cm}}$, and we can replace $\alpha_\lambda$ by $c_{\gamma_\lambda}\circ\alpha_\lambda$,
proving the claim.

It follows that we can assume $\psi_{\lambda}^{\cm}= \alpha_\lambda\circ \phi_\lambda^\cm$ and 
$\phi_{\lambda}^{\cm}= \beta_\lambda\circ \psi_\lambda^\cm$ for all $\lambda$.
Again because $\psi_{\lambda}^{\cm}(\Gal_L)$ has Zariski dense image, 
we deduce $\alpha_{\lambda_1}\circ\beta_{\lambda_1}=\mathrm{Id}$, and hence that $\beta_{\lambda_1}$ is an isomorphism,
and hence by Proposition \ref{record}(iii), that $f_{\lambda_1}$ is an isomorphism. 
We are done.%\footnote{{\color{magenta} G: please check if this rewrite is okay. I felt safer having equalities and not only isomorphisms.}}
\end{proof}

\section{Proof of Theorem \ref{main2}}\label{s6}
%\footnote{{\color{magenta} G: I have not tried to generalize PSW from strongly $E$-rational to $E$-rational semisimple compatible systems.}}
We will follow Patrikis-Snowden-Wiles' proof of Theorem \ref{main2} in \cite[$\mathsection3.5$]{PSW18}.
Firstly, we define the necessary notation.
Secondly, we present the main ingredients of their proof.
Finally, we describe how the strongly $E$-compatible condition and the Hodge-Tate condition (HT) in subsection \ref{s1.3} can be omitted in the proof
by applying Theorems \ref{coeffd}(i) and \ref{main1}.

\subsection{Some notation}\label{s6.1}
Let $F/\Q_\ell$ be a finite field extension, $\bG$ be a reductive group defined over $F$,
and $\Gamma\subset \bG(F)$ be a profinite subgroup.
Define the following groups.
\begin{itemize}
\item $\bG^\circ$: the identity component of $\bG$.
\item $\bG^{\ad}$: the adjoint quotient of $\bG^{\circ}$ with $\tau: \bG^{\circ}(F)\to \bG^{\ad}(F)$ the canonical map.
\item $\bG^{\sc}$: the universal cover of $\bG^{\ad}$ with $\sigma: \bG^{\sc}(F)\to \bG^{\ad}(F)$ the canonical map.
\item $\bG^{\der}$: the derived group $[\bG^\circ,\bG^\circ]$.
\item $\bG^{\mathrm{tor}}$: the torus $\bG^\circ/\bG^{\der}$.
\item $\Gamma^\circ$: the intersection $\Gamma\cap\bG^\circ(F)$.
\item $\Gamma^{\mathrm{tor}}$: the image of $\Gamma^\circ$ in $\bG^{\mathrm{tor}}(F)$.
\end{itemize}

\subsection{Main ingredients}\label{s6.2}
The first ingredient  is purely group/representation theoretic.

\begin{thm}\cite[Theorem 3.8]{PSW18} \label{gprepn}
Let $\bG/F$ be a reductive group with $\bG^\circ$ unramified, and let $\Gamma\subset\bG(F)$ be 
a Zariski dense  profinite group such that $\sigma^{-1}(\tau(\Gamma^\circ))$ is
hyperspecial in $\bG^{\sc}(F)$. Assume that $\Gamma^{\mathrm{tor}}$
is open in $\bG^{\mathrm{tor}}(F)$ and its index in the maximal compact is small
compared to $\ell$. Also assume that $\dim(\bG)$ and $\# \pi_0(\bG)$ are small 
compared to $\ell$.

Let $(\rho,V)$ be a representation of $\bG$ over $\overline F$ with $\dim(V)$ small compared to $\ell$.
Then there exists a $\Gamma$-stable lattice $\mathcal V$ in $V$ such that the natural map
$$\End_\Gamma(\mathcal V)\otimes \overline\F_\ell \to \End_\Gamma(\mathcal V\otimes \overline\F_\ell)$$
is an isomorphism.
\end{thm}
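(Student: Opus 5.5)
The plan is to reduce the statement to the connected case, and there to the corresponding statement for simply connected groups with hyperspecial compact subgroups. That uses Bruhat--Tits theory and the representation theory of the special fibre. Since $\dim V$ is small compared to $\ell$, all the representation theory in characteristic $\ell$ that appears will be in the ``small weight'' range, where it behaves as in characteristic zero.

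\emph{Step 1: the derived part.} Let $\Gamma^{\sc}:=\sigma^{-1}(\tau(\Gamma^\circ))$. This is hyperspecial, so it equals $\mathcal{G}(\mathcal{O}_F)$ for a reductive model $\mathcal G$ of $\bG^{\sc}$. Decompose $V|_{\bG^\circ_{\overline F}}$ into irreducibles $V_\mu$ of highest weights $\mu$; these weights are small relative to $\ell$ because $\dim V$ is. I would take the Weyl-module lattice $\mathcal V_\mu=\mathcal{G}$-stable lattice generated by a highest weight vector. By Jantzen/Premet-type results for small weights, its reduction is the irreducible $\mathcal G_{\overline\F_\ell}$-module $L(\mu)$. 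It remains irreducible for the finite group $\mathcal G(\F_q)$, and hence for $\Gamma^{\sc}$ (Steinberg restriction, $q$-restricted weights). Moreover, distinct small $\mu$ give non-isomorphic reductions. Hence $\End_{\Gamma^{\sc}}$ of the reduction of $\bigoplus\mathcal V_\mu^{m_\mu}$ is $\prod M_{m_\mu}(\overline\F_\ell)$. This matches the generic fibre, and the map in the statement is an isomorphism for $\Gamma^{\sc}$ acting through $\bG^{\sc}\to\bG^{\ad}$. A subtlety here is that $\Gamma^{\sc}$ acts on $V$ only projectively: $\tau(\Gamma^\circ)$ versus its lift differ by central elements. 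These act by scalars on each isotypic piece, so they do not change the endomorphism algebras.

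\emph{Step 2: the torus and the identity component.} Now pass from $\Gamma^{\sc}$ to $\Gamma^\circ$. The group $\Gamma^\circ$ is generated, up to center, by $\Gamma^{\sc}$-image and the part mapping to $\Gamma^{\rm tor}$. The central torus $Z^\circ$ acts on each $V_\mu$ by a character $\chi_\mu$. Two summands with the same $\bG^{\der}$-type but different central characters must stay distinguishable mod $\ell$. This holds because $\Gamma^{\rm tor}$ is open of index small relative to $\ell$ in the maximal compact. The characters appearing have bounded exponents since $\dim V$ is small, so the distinct ones remain distinct on $\Gamma^{\rm tor}$ after reduction; this is like a Teichm\"uller/ramification argument on a torus that is unramified because $\bG^\circ$ is. Consequently the $\Gamma^\circ$-endomorphisms of the reduction are still block-diagonal with the right blocks, which gives the isomorphism for $\Gamma^\circ$.

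\emph{Step 3: the disconnected case.} Then pass from $\Gamma^\circ$ to $\Gamma$, using that $\Gamma/\Gamma^\circ\cong\pi_0(\bG)$ has order prime to $\ell$ (small compared to $\ell$). Choose the lattice $\Gamma$-stable by summing the $\Gamma$-translates of the lattice from Step 2, arranged compatibly along orbits. We have $\End_\Gamma=(\End_{\Gamma^\circ})^{\Gamma/\Gamma^\circ}$ on both sides. Taking invariants under a group of order prime to $\ell$ is exact and commutes with $\otimes\overline\F_\ell$, so the isomorphism descends.

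I expect the main obstacle to be Step 1. It requires precise control of the reductions of Weyl modules and their restriction to the hyperspecial subgroup $\mathcal G(\mathcal O_F)$ rather than the algebraic group. It also requires making ``small compared to $\ell$'' quantitatively uniform in $\dim V$, $\dim\bG$ and $\#\pi_0$. I would first try to cite the linkage principle together with Steinberg's restriction theorem for finite groups of Lie type, and Serre's and Larsen's results on saturation/exponentiation to handle the passage from $\mathcal G(\mathcal O_F)$ to its reduction $\mathcal G(\F_q)$.
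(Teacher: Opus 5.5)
The paper does not prove this statement. It imports it verbatim from \cite[Theorem 3.8]{PSW18} and only checks its hypotheses in \S7, so your sketch has to stand on its own. Steps 1 and 3 are plausible outlines. Step 2, however, has a genuine gap. You claim that the characters of $\bG^{\mathrm{tor}}$ by which constituents of the same $\bG^{\der}$-type differ have exponents bounded because $\dim V$ is small. That is false: $\dim V$ bounds the $\bG^{\sc}$-highest weights (Weyl's dimension formula), but it does not bound central characters.

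Take $F=\Q_\ell$, $\bG=\mathbb{G}_m$, $\Gamma=\Z_\ell^\times$, and $V=\mathbf{1}\oplus\chi^{\ell-1}$ with $\chi$ the identity character. Every hypothesis holds uniformly in $\ell$: $\dim V=2$, $\dim\bG=\#\pi_0(\bG)=1$, and $\Gamma^{\mathrm{tor}}$ is the maximal compact. Yet $x^{\ell-1}\equiv 1 \bmod \ell$. Hence every lattice that is a sum of lattices in the two lines has trivial reduction, and $\End_\Gamma(\mathcal V\otimes\overline\F_\ell)=M_2(\overline\F_\ell)$, while $\End_\Gamma(V)$ is $2$-dimensional.

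The same failure occurs with nontrivial derived group. Take $\bG=\GL_2$ and $\Gamma=\GL_2(\Z_\ell)$, so $\Gamma^{\sc}=\mathrm{SL}_2(\Z_\ell)$ and $\Gamma^{\mathrm{tor}}=\Z_\ell^\times$, and let $V=\mathrm{std}\oplus(\mathrm{std}\otimes\det^{\ell-1})$. The two summands have the same restriction to $\Gamma^{\sc}$, and their central characters agree modulo $\ell$ on $\Gamma^{\mathrm{tor}}$. So your direct sum of Weyl-module lattices has a $4$-dimensional endomorphism algebra mod $\ell$ instead of a $2$-dimensional one.

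In the torus example the conclusion is still attainable, but only with a non-split lattice, e.g.\ $\mathcal V=\{(a,b)\in\Z_\ell^2:\ a\equiv b \bmod \ell\}$. Its reduction is the non-split unipotent extension given by the Fermat quotient $x\mapsto (x^{\ell-1}-1)/\ell \bmod \ell$. The commutant of this extension is spanned by $1$ and a nilpotent $N$, and these are exactly the reductions of $1$ and $\mathrm{diag}(0,\ell)$, which span $\End_\Gamma(\mathcal V)$.

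So Step 2 needs one of two repairs.
\begin{enumerate}
\item[(a)] An extra input bounding, independently of $\ell$, the characters of $\bG^{\mathrm{tor}}$ that occur in $V$. This is not among the stated hypotheses. With such a bound your argument does go through: a nonzero bounded character of the unramified torus $\bG^{\mathrm{tor}}$ stays nontrivial after reduction on any subgroup, of index small compared to $\ell$, of the $\F_q$-points of its special fibre.
\item[(b)] A different construction on the multiplicity spaces of the abelian group $\Gamma^\circ\cap Z(\bG^\circ)(F)$ that allows non-split lattices. Then the reductions are no longer semisimple, and Step 3 must also be redone, because a sum of $\Gamma$-translates of a good lattice need not stay good.
\end{enumerate}

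A smaller point: $\Gamma^{\sc}$ acts linearly, not projectively, on $V$ through $\bG^{\sc}\to\bG^{\der}$. The real discrepancy between its image and $\Gamma^\circ$ consists of elements of $Z(\bG^\circ)(F)$. These act by scalars on $\bG^\circ$-isotypic pieces but not on $\bG^{\sc}$-isotypic ones, which is exactly why Step 2 carries the weight it does.
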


The second one is a big Galois image result of Larsen on the ``semisimple part''.

\begin{thm}\cite{La95}\label{Larsen}
Let $\{\rho_\ell:\Gal_K\to\GL_k(\Q_\ell)\}_{\Pi}$ be a semisimple (strongly) $\Q$-rational compatible system
of $K$ indexed by a subset $\Pi\subset\Sigma_\Q$ of Dirichlet density one,
with $\{\bG_\ell\}_{\Pi}$ as the system of algebraic monodromy groups.
Then there exists a Dirichlet density one subset $\Pi'\subset\Pi$ such that 
$\sigma^{-1}(\tau(\mathrm{Im}(\rho_\ell)^\circ))$ is
hyperspecial in $\bG_\ell^{\sc}(\Q_\ell)$ for all $\ell\in\Pi'$.
\end{thm}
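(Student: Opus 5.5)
This is Larsen's theorem \cite{La95}, which the paper quotes rather than reproves. I will sketch how I would prove it, following Larsen's strategy.

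\emph{Reduction to the connected case.} By \cite{Se81} there is a finite extension $K^{\conn}/K$ such that $\rho_\ell(\Gal_{K^{\conn}})\subset\bG_\ell^\circ(\Q_\ell)$ for all $\ell$. Replacing $K$ by $K^{\conn}$ changes $\mathrm{Im}(\rho_\ell)^\circ$ only by a subgroup of index bounded independently of $\ell$. Hyperspeciality of $\sigma^{-1}(\tau(\cdot))$ is stable under passing to such bounded-index overgroups inside a compact open subgroup. This holds for $\ell$ large, since a hyperspecial subgroup is a maximal compact subgroup. So we may assume $\bG_\ell$ is connected.

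\emph{Step 1: reduce to the semisimple part.} The independence results \cite{Hu13} give the rank and semisimple rank of $\bG_\ell$ independent of $\ell$, and even the formal bi-character. I would work with the compact group $\Gamma_\ell:=\sigma^{-1}(\tau(\rho_\ell(\Gal_K)))\subset\bG_\ell^{\sc}(\Q_\ell)$. The goal is to show that for a density-one set of $\ell$:
\begin{itemize}
\item $\bG^{\sc}_\ell$ is unramified over $\Q_\ell$;
\item $\Gamma_\ell$ is a maximal compact subgroup that is hyperspecial.
\end{itemize}

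\emph{Step 2: unramifiedness via Chebotarev.} I would choose Frobenius elements $\Frob_v$ whose images are regular semisimple. For $\ell$ split in the splitting field of $P_v(T)$, the torus $\bT_v$ centralizing $\rho_\ell(\Frob_v)$ is split over $\Q_\ell$. This splitting is controlled by a fixed number field $E_v$, and $\ell$ splitting completely in $E_v$ is a positive-density condition. To get density one instead, I would let $v$ vary. Via Chebotarev applied to the compatible system, for a density-one set of $\ell$ one finds a Frobenius element generating a maximal torus of $\bG_\ell$ that is unramified over $\Q_\ell$. This yields that $\bG_\ell^{\sc}$ is quasi-split and split over an unramified extension, hence unramified.

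\emph{Step 3, the main obstacle: hyperspeciality of $\Gamma_\ell$.} Here I would use Larsen's counting argument with Haar measure. Hyperspeciality is detected by the mod-$\ell$ reduction: $\Gamma_\ell$ is hyperspecial if its reduction mod $\ell$ contains the full group of $\F_\ell$-points of the special fiber of a smooth reductive model. This holds if $\Gamma_\ell$ contains enough elements whose reductions are regular semisimple in every conjugacy class of maximal tori over $\F_\ell$.

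Concretely, the plan is as follows.
\begin{itemize}
\item Use Nori's theory, or the Larsen--Pink classification of large subgroups of $\mathrm{GL}_n(\F_\ell)$, to show that the reduction $\bar\Gamma_\ell$ is, for $\ell$ large, the $\F_\ell$-points of a connected semisimple $\F_\ell$-group $\underline{\bar G}_\ell$, up to bounded index.
\item Show that $\underline{\bar G}_\ell$ has the same root datum as $\bG^{\sc}_\ell$ for a density-one set of $\ell$. To do this, I would compare the distribution of characteristic polynomials of Frobenii: by Chebotarev, their equidistribution over $\ell$ in a density-one set matches the Haar measure count $|\underline{\bar G}_\ell(\F_\ell)|\sim \ell^{\dim}$. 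The formal bi-character forces the dimensions and ranks to agree.
\item Conclude via a lifting argument (a closed subgroup of $\bG^{\sc}(\Z_\ell)$ surjecting onto $\bG^{\sc}(\F_\ell)$ is all of it for $\ell\ge5$) that $\Gamma_\ell$ is hyperspecial.
\end{itemize}

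The delicate part is the density-one (rather than all-but-finitely-many) bookkeeping. I would handle it by a Borel--Cantelli/Chebotarev-type estimate showing that the set of $\ell$ for which the mod-$\ell$ image is too small has Dirichlet density zero.
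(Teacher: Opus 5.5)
The paper gives no proof of this statement; it quotes it from \cite{La95}, so there is no internal argument to compare against. Judged on its own, your outline names reasonable ingredients: Frobenius tori, Nori's theory, and a mod-$\ell$ criterion for hyperspeciality. However, the steps that carry the content of Larsen's theorem are either incorrect or missing.

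\emph{Step 2 does not give unramifiedness.} A single place $v$ whose Frobenius torus $\bT_v$ is maximal already makes $\bT_v\times\Q_\ell$ an unramified maximal torus for every $\ell$ unramified in the splitting field of $P_v$. That is all but finitely many $\ell$, so no density argument is involved. The problem is that ``contains an unramified maximal torus'' does not imply quasi-split. For a quaternion division algebra $D$ over $\Q_\ell$, the anisotropic group $\mathrm{SL}_1(D)$ contains the norm-one torus of the unramified quadratic extension as a maximal torus. To get quasi-splitness from a torus you would need, e.g., that Frobenius at $\ell$ preserves a positive system of roots of $\bT_v$. For fixed $v$ this is typically only a positive-density condition on $\ell$. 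Splitness itself can fail on a set of density $1/2$ (unitary-type monodromy), so ``letting $v$ vary'' cannot rescue the split version either.

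\emph{Step 3 presupposes a hyperspecial subgroup.} Your criterion assumes $\Gamma_\ell$ already lies in $\underline G(\Z_\ell)$ for a reductive model. A compact subgroup lies in \emph{some} maximal compact subgroup. Excluding the non-hyperspecial vertices of the building needs a Bruhat--Tits comparison of the mod-$\ell$ image with the reductive quotients of parahorics, and this is absent from your sketch.

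\emph{The mod-$\ell$ type argument fails.} The claim that the mod-$\ell$ envelope has the type of $\bG_\ell^{\sc}$ is not supported by the mechanism you give. Nori's theorem only controls $\bar\Gamma_\ell^{+}$, which a priori may be small or trivial, so $\bar\Gamma_\ell$ is not known to be $\underline{\bar G}_\ell(\F_\ell)$ up to bounded index. Compatibility plus Chebotarev transports only characteristic polynomials. Their values on $\bar\Gamma_\ell$ number roughly $\ell^{\mathrm{rank}}$, so they cannot see $|\bar\Gamma_\ell|\sim\ell^{\dim}$. Moreover, the formal bi-character determines neither dimension nor root datum. For example, $G_2\subset\GL_7$ and its equal-rank subgroup $\mathrm{SL}_3$, acting by $\mathrm{std}\oplus\mathrm{std}^\vee\oplus\mathbf 1$, have the same formal character. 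Excluding an ``$\mathrm{SL}_3(\F_\ell)$-sized'' reduction of a $G_2$-monodromy group is precisely the hard part.

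\emph{The density-one step is missing.} You defer it to an unspecified ``Borel--Cantelli/Chebotarev-type estimate''. Each $P_v$ certifies good behaviour only away from the finitely many primes dividing a fixed discriminant, and Dirichlet density is only finitely additive. So a genuine mechanism is needed, for instance Chebotarev in the variable $\ell$ for the splitting fields of finitely many well-chosen Frobenius tori, giving density at least $1-\epsilon$ for every $\epsilon>0$. None is supplied.

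A minor point: the bounded-index discussion in your reduction to the connected case is unnecessary. By definition of $K^{\conn}$, $\rho_\ell(\Gal_{K^{\conn}})$ equals $\mathrm{Im}(\rho_\ell)^\circ$.
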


The last one is a big Galois image result on the ``toric part''.

\begin{thm}\cite[Theorem 1.6]{PSW18}\label{toric}
Let $\{\rho_\ell\Gal_K\to\GL_k(\Q_\ell)\}_{\Pi}$ be a semisimple (strongly) $\Q$-rational compatible system
of $K$ indexed by $\Pi\subset\Sigma_\Q$ that satisfies 
the Hodge-Tate condition (HT) in subsection \ref{s1.3},
with $\{\bG_\ell\}_{\Pi}$ as the system of algebraic monodromy groups.
Then there exists a positive integer $N$ with the following property:
the index of $\mathrm{Im}(\rho_\ell)^{\mathrm{tor}}$ in the maximal compact subgroup of $\bG_\ell^{\mathrm{tor}}(\Q_\ell)$
is at most $N$ for all $\ell\in\Pi$.
\end{thm}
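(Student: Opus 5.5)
The plan is to reduce to an abelian compatible system with values in the torus $\bG_\ell^{\mathrm{tor}}$, and then to use the Serre-group description of Theorem \ref{converse} together with the crystallinity in (HT) to exhibit an explicit, $\ell$-independent piece of the image.

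First I reduce to the connected case. By \cite{Se81} there is a finite Galois extension $K^{\conn}/K$, independent of $\ell$, such that $\rho_\ell(\Gal_{K^{\conn}})$ is Zariski dense in $\bG_\ell^\circ$ for all $\ell$. Then $\mathrm{Im}(\rho_\ell)^\circ\supset\rho_\ell(\Gal_{K^{\conn}})$, so passing to $K^{\conn}$ only makes the image smaller. Condition (HT) is preserved under this restriction. Hence I may assume every $\bG_\ell$ is connected.

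Next I set up the toric part. Following the construction in Proposition \ref{aux}(ii) and Proposition \ref{auxgp}, I choose faithful representations of $\bG_\ell^{\mathrm{tor}}=\bG_\ell^{\ab}$. These turn $\Gal_K\to\bG_\ell^{\mathrm{tor}}(\Q_\ell)$ into an abelian semisimple strongly $\Q$-rational compatible system. Its image is Zariski dense, with monodromy groups $\bG^{\ab}\times\Q_\ell$ for a common $\Q$-torus $\bT:=\bG^{\ab}$ (Theorem \ref{main1}(iii)). By Theorem \ref{converse}, this system comes from a $\Q$-morphism $\phi:\bS_\mathfrak m\to\bT$, so the toric part of $\rho_\ell$ is $\phi\circ\epsilon_\ell$. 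Restricting $\phi$ to the torus $\bT_\mathfrak m\subset\bS_\mathfrak m$, which is a quotient of $\bT_K$, gives an algebraic morphism $r:\bT_K\to\bT$ defined over $\Q$ and independent of $\ell$. Because $\bS_\mathfrak m^\circ=\bT_\mathfrak m$ and $\phi(\bS_\mathfrak m)=\bT$ is connected, $\phi(\bT_\mathfrak m)=\bT$, and so $r$ is surjective.

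The key local input comes from (HT). Take $\ell$ outside a finite set, so that $\ell$ is unramified in $K$, no place above $\ell$ lies in $S\cup\mathrm{Supp}(\mathfrak m)$, and $\bT_K$, $\bT$ and $\ker r$ have smooth models over $\Z_\ell$. Since $\rho_\ell$ is crystalline at every $v\mid\ell$, so is its abelian toric quotient. For crystalline characters, the identity $\phi_\ell\circ i_\ell(x)=r(x^{-1})$ holds on the whole unit group $\prod_{v\mid\ell}\mathcal O_{K_v}^\times=\bT_K(\Z_\ell)$, not only near $1$. This gives
$$\mathrm{Im}(\rho_\ell)^{\mathrm{tor}}\supset r(\bT_K(\Z_\ell)).$$
It therefore suffices to bound $[\bT(\Z_\ell):r(\bT_K(\Z_\ell))]$ uniformly in $\ell$, since $\bT(\Z_\ell)$ is the maximal compact subgroup of $\bT(\Q_\ell)$ for such $\ell$. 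From the exact sequence $1\to\ker r\to\bT_K\to\bT\to1$ of smooth $\Z_\ell$-groups, the cokernel of $\bT_K(\Z_\ell)\to\bT(\Z_\ell)$ injects into $H^1(\Z_\ell,\ker r)$. By Hensel's lemma this equals $H^1(\F_\ell,\ker r)$. By Lang's theorem the connected part contributes nothing, so the cokernel is bounded by $\#\pi_0(\ker r)$, which is independent of $\ell$. The finitely many excluded $\ell$ each contribute a finite index, because the image is open by local algebraicity. Taking the maximum gives $N$.

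The step I expect to be the main obstacle is the claim that crystallinity forces exact algebraicity on the full integral units, with the same morphism $r$ for every $\ell$. This is the point where (HT) is genuinely used, and where one must check that the infinity type does not depend on $\ell$; the bounded Hodge–Tate weights in the multiset $I$ constrain the possible cocharacters. I would establish it via Fontaine–Laffaille or the standard description of crystalline characters of $\Gal_{K_v}$ for $K_v/\Q_\ell$ unramified, namely as Lubin–Tate-type characters on $\mathcal O_{K_v}^\times$. The $\ell$-independence of $r$ is then automatic from the single global $\phi$.
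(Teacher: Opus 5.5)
Your proof is correct, but it does not follow the paper, because the paper never proves Theorem~\ref{toric}: it imports it from \cite{PSW18}. In \S7.3 the paper uses Theorem~\ref{main1}(ii) only to transfer the question from $\{\rho_\ell\}$ to the abelian system $\{\phi_\ell^{\ab}\}$. It then applies \cite[Theorem 1.6]{PSW18} to that system as a black box, noting that $\{\phi_\ell^{\ab}\}$ satisfies (HT) automatically.

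You make the same first reduction, but then prove the abelian case yourself:
\begin{enumerate}[(a)]
\item the single $\Q$-morphism $\phi:\bS_\mathfrak m\to\bT$ of Theorem~\ref{converse} gives an $\ell$-independent surjection $r:\bT_K\to\bT$;
\item for almost all $\ell$ you get $r(\bT_K(\Z_\ell))\subset\mathrm{Im}(\rho_\ell)^{\mathrm{tor}}$, after identifying $\bG_\ell^{\mathrm{tor}}\simeq\bT\times\Q_\ell$ via $r_\ell$ and a conjugation, which does not change indices;
\item Hensel and Lang bound $[\bT(\Z_\ell):r(\bT_K(\Z_\ell))]$ by $\#\pi_0(\ker r)$.
\end{enumerate}
This is sound. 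It is not circular, since Theorem~\ref{main1} is proved in \S6 without using Theorem~\ref{toric}.

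The step you expected to be the main obstacle is elementary and needs no (HT). Suppose no place above $\ell$ lies in $\mathrm{Supp}(\mathfrak m)$. Serre defines $\epsilon_\ell(a)=\epsilon(a)\,p_\ell(a_\ell)^{-1}$, where $p_\ell:\bT_K(\Q_\ell)\to\bT_\mathfrak m(\Q_\ell)$ and $\epsilon$ is trivial on the unit id\`eles congruent to $1$ modulo $\mathfrak m$. This gives $\epsilon_\ell\circ i_\ell(u)=p_\ell(u)^{-1}$ for every $u\in\prod_{v\mid\ell}\mathcal O_{K_v}^\times$ directly. Your crystalline argument also works, for arbitrary $K_v$: a crystalline character is algebraic on all of $\mathcal O_{K_v}^\times$, and that algebraic character must be $r$ because the two agree on an open subgroup. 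Consequently your argument proves the bound with no Hodge--Tate hypothesis at all. That is exactly the strengthened form the paper needs in \S7.

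The two routes buy different things. The argument of \cite{PSW18} has to control the toric quotient without a common global model, and (HT) is what supplies the uniformity in $\ell$ there. In exchange, it is independent of the heavy inputs behind Theorem~\ref{main1}: Theorem~\ref{BH1}, the Taniyama group, and \cite{HL25}. Your route, granted Theorem~\ref{main1}, is short and self-contained. It also gives an explicit $N$: the maximum of $\#\pi_0(\ker r)$ and the finitely many indices at the excluded primes.
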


\subsection{Proof of Theorem~\ref{main2}; assembling the pieces}\label{s6.3}
Let $\Pi\subset\Sigma_\Q$ be a subset of Dirichlet density one and  
$$\{\rho_\lambda: \Gal_K\to \GL_n(\overline E_\lambda)\}_{\Sigma_E(\Pi)}$$ 
be a  semisimple $E$-rational compatible system of $K$
indexed by $\Sigma_E(\Pi)$. 
By Theorem \ref{coeffd}(i), we may assume $\rho_\lambda(\Gal_K)\subset\GL_n(E_\lambda)$ for all $\lambda$.
By restriction of scalars with respect to $E/\Q$ (subsection \ref{sExtcs}),
we obtain a semisimple strongly $\Q$-rational compatible system of $K$
\begin{equation}\label{Qcs}
\{\rho_\ell:=\prod_{\lambda|\ell}\rho_\lambda:\Gal_K\to\GL_{n[E:\Q]}(\Q_\ell)\}_{\Pi}
\end{equation}
 indexed by $\Pi$.
According to \cite[$\mathsection3.5$]{PSW18}, if the assertions of Theorems \ref{Larsen} and \ref{toric}
hold for \eqref{Qcs}, then  Theorem \ref{gprepn} and the $\ell$-independence of $\pi_0(\bG_\ell)$ \cite{Se81} imply Theorem \ref{main2}.

Since Theorem \ref{Larsen} works for any semisimple strongly $\Q$-rational compatible system,
it remains to establish the assertion of Theorem \ref{toric} for \eqref{Qcs}.
By Serre \cite{Se81}, we may assume $\bG_\ell$ is connected for all $\ell$.
Attached to $\{\rho_\ell\}_{\Pi}$ is an abelian strongly
$\Q$-compatible system $\{\phi_\ell^{\ab}\}_\Pi$ in Theorem \ref{main1}(ii).
By Theorem \ref{main1}(ii), we obtain $\bG_\ell^{\mathrm{tor}}=\bG_\ell^{\ab}$ and
the equivalence of the assertions of Theorem \ref{toric} on $\{\rho_\ell\}_{\Pi}$ and $\{\phi_\ell^{\ab}\}_\Pi$. 
Since $\{\phi_\ell^{\ab}\}_\Pi$ satisfies (HT) automatically, 
we are done by applying Theorem \ref{toric} to $\{\phi_\ell^{\ab}\}_\Pi$.\qed

\vspace{.2in}

\section*{Acknowledgments}
The authors thank Ga\"etan Chenevier, Toby Gee, and Alireza Shavali for their comments and suggestions on this work. Additionally, we thank Michael Larsen, Stefan Patrikis, and Andrew Wiles for their interest.
 G.B. received funding by the Deutsche Forschungsgemeinschaft (DFG, German Research Foundation) TRR 326 \textit{Geometry and Arithmetic of Uniformized Structures}, project number 444845124. C.-Y.H. was partially supported by Hong Kong RGC and a Humboldt Research Fellowship.

\vspace{.1in}
\end{document}